\newcommand {\abs}[1]{\lvert#1\rvert}
\newcommand {\C}{{\mathbb C}}
\newcommand {\Ce}{\mathrm{C}}
\newcommand {\Cc}{\mathrm{C_{c}}}
\newcommand {\Cb}{\mathrm{C_{b}}}
\newcommand {\Cstar}{{\mathrm C}^\ast}
\newcommand {\Di}{{\mathbb D}}
\newcommand {\ERG}{{\mathcal{E}}}
\newcommand {\Ell}{\mathrm{L}}
\newcommand {\Ellp}{\Ell^{p}}
\newcommand {\INV}{{\mathcal{I}}}
\newcommand{\ind}{{\mathbf{1}}}
\newcommand {\La}{{\mathcal{L}}}
\newcommand {\M}{{\mathrm{M}}}
\newcommand {\N}{{{\mathbb N}}}
\newcommand {\norm}[1]{\left\|#1\right\|}
\newcommand {\ph}{{\varphi}}
\newcommand {\R}{{\mathbb R}}
\newcommand {\supp}{{\mathrm{supp}}}
\newcommand {\T}{{\mathbb T}}
\newcommand {\ud}{\mathrm{d}}
\newcommand {\ue}{\mathrm{e}}
\newcommand {\ui}{\mathrm{i}}
\newcommand {\vanish}[1]{\relax}
\newcommand {\w}{{\omega}}
\newcommand {\bs}{{{B}}}
\newcommand {\isom}{{{S}}}
\newcommand {\measalg}{{\mathcal A}_\mu}
\newcommand {\rep}{\rho}
\newcommand{\repint}{\rho}
\newcommand {\ts}{{{X}}}
\newcommand {\sect}{{{s}}}
\newcommand {\vs}{{{V}}}
\newcommand {\PR}{{\mathcal{P}}}
\newcommand {\ks}{{\Ell^p}}
\newcommand {\kstext}{{\Ell^p}}
\newcommand {\two}{{\Ell^2}}
\newcommand {\twotext}{{\Ell^2}}
\newcommand{\lh}{T}
\newtheorem{theorem}{Theorem}[section]
\newtheorem{lemma}[theorem]{Lemma}
\newtheorem{proposition}[theorem]{Proposition}
\newtheorem{corollary}[theorem]{Corollary}
\theoremstyle{definition}
\newtheorem{definition}[theorem]{Definition}
\newtheorem{remark}[theorem]{Remark}
\numberwithin{equation}{section}
\title[Disintegration of positive isometric group representations]{Disintegration of positive isometric group representations on $\boldsymbol{\Ellp}$-spaces}
\author{Marcel de Jeu}
\address{Mathematical Institute\\
Leiden University\\
P.O.~Box 9512\\
2300 RA Leiden\\
The Netherlands}
\email{mdejeu@math.leidenuniv.nl}
\author{Jan Rozendaal}
\address{Institute of Mathematics of the Polish Academy of Sciences\\
ul. \'{S}niadeckich 8\\
00-656 Warsaw\\
Poland}
\email{janrozendaalmath@gmail.com}
\thanks{During the preparation of this manuscript the second author was supported by NWO-grant 613.000.908.}
\subjclass[2010]{Primary 22D12; Secondary 37A30, 46B04}
\dedicatory{}
\keywords{Positive representation, $\Ell^{p}$-space, order indecomposable representation, direct integral of Banach lattices}
\begin{document}

\begin{abstract}
Let $G$ be a Polish locally compact group acting on a Polish space~$\ts$ with a $G$-invariant probability measure~$\mu$. We factorize the integral with respect to $\mu$ in terms of the integrals with respect to the ergodic measures on $X$, and show that $\Ell^{p}(\ts,\mu)$ ($1\leq p<\infty$) is $G$-equivariantly isometrically lattice isomorphic to an $\kstext$-direct integral of the spaces $\Ell^{p}(\ts,\lambda)$, where $\lambda$ ranges over the ergodic measures on $X$. This yields a disintegration of the canonical representation of $G$ as isometric lattice automorphisms of $\Ell^{p}(\ts,\mu)$ as an $\kstext$-direct integral of order indecomposable representations.

If $(\ts^\prime,\mu^\prime)$ is a probability space, and, for some $1\leq q<\infty$, $G$ acts in a strongly continuous manner on $\Ell^{q}(\ts^\prime,\mu^\prime)$ as isometric lattice automorphisms that leave the constants fixed, then $G$ acts on $\Ell^{p}(\ts^{\prime},\mu^{\prime})$ in a similar fashion for all $1\leq p<\infty$. Moreover, there exists an alternative model in which these representations originate from a continuous action of $G$ on a compact Hausdorff space. If $(\ts^\prime,\mu^\prime)$ is separable, the representation of $G$ on $\Ell^p(X^\prime,\mu^\prime)$ can then be disintegrated into order indecomposable representations.

The notions of $\kstext$-direct integrals of Banach spaces and representations that are developed extend those in the literature.




\end{abstract}

\maketitle

\section{Introduction and overview}
\label{sec:introduction}

There is an extensive literature on unitary group representations. Apart from an intrinsic interest and mathematical relevance, the wish to understand such representations originates from quantum theory, where the unitary representations of the symmetry group of a physical system have a natural role. However, in many cases where a symmetry yields a unitary representation of the pertinent symmetry group, there is also a family of canonical representations on Banach lattices. The rotation group of $\R^3$ acts on the 2-sphere in a measure preserving fashion, yielding a canonical unitary representation on $\Ell^2(S^2,\ud\sigma)$, but there are, in fact, canonical strongly continuous representations as isometric lattice automorphisms of the (real) Banach lattice $\Ell^{p}(S^2,\ud\sigma)$ for all $1\leq p<\infty$. Likewise, for all $1\leq p<\infty$, the motion group of $\R^{d}$ acts in a strongly continuous fashion as isometric lattice automorphisms on the Banach lattice $\Ell^{p}(\R^{d},\ud x)$. Representations of groups as isometric lattice automorphisms of Banach lattices are quite common. In spite of this, not much is known about such representations or, for that matter, about the related positive representations of ordered Banach algebras and Banach lattice algebras in Banach lattices; the material in \cite{deJeu-Messerschmidt13, deJeu-Ruoff16, deJeu-Wortel12, deJeu-Wortel14, Wickstead15a, Wickstead15b} is a modest start at best. Nevertheless, it seems quite natural to investigate such representations. Moreover, given the long-term success, in a Hilbert space context, of the passage from single operator theory to groups and algebras and their representations\textemdash a development that was initially also stimulated and guided by the wish to understand unitary group representations\textemdash it seems promising to develop a similar theory for representations in Banach lattices.

One of the highlights in abstract representation theory in Hilbert spaces is the insight that every strongly continuous unitary representation of a separable locally compact Hausdorff group on a separable Hilbert space can be disintegrated into irreducible unitary representations. This follows from a similar theorem for $\Cstar$-algebras and the standard relation between the unitary representations of a group and the non-degenerate representations of its group $\Cstar$-algebra; see \cite[Theorem~8.5.2 and 18.7.6]{Dixmier77}. Every representation is thus built from irreducible ones. Is something analogous possible for strongly continuous actions of a locally compact Hausdorff group as isometric lattice automorphisms of Banach lattices? This seems a natural guiding question when studying representations in an ordered context. It is still very far from having been answered in general, and presumably one will have to restrict oneself to a class of suitable Banach lattices. After all, the unitary theory works particularly well in just one space, namely $\ell^2$, and it seems doubtful that there can be a uniform answer for the existing diversity of Banach lattices.

What, exactly, should `irreducible' mean in an ordered context? When searching for the parallel with unitary representations it is actually more convenient to think of irreducible unitary representations as indecomposable unitary representations, which happens to be the same notion, and look for the analogue of the latter. Given a representation of a group $G$ as lattice automorphisms of two vector lattices $E_{1}$ and $E_{2}$, there is a natural representation of $G$ as lattice automorphisms of the vector lattice $E=E_{1}\oplus E_{2}$. If a representation of $G$ as lattice automorphisms of a given vector lattice $E$ is not such an order direct sum of two non-trivial sub-representations, then one will want to call it order indecomposable. Actually, if $E=E_{1}\oplus E_{2}$ is an order direct sum of vector lattices, then more is true than one would perhaps expect. $E_{1}$ and $E_{2}$ are automatically projection bands, and they are each other's disjoint complement; this is a special case of \cite[Theorem~11.3]{Zaanen}. Coming from the other side, if a projection band in $E$ is invariant under a group of lattice automorphisms, then so is its disjoint complement, and hence there is a corresponding decomposition of the representation into two sub-representations as lattice automorphisms. All in all, we have the following natural definition.

\begin{definition}\label{def:indecomposable}
Let $E$ be a vector lattice, and let $\rep$ be a homomorphism from $G$ into the group of lattice automorphisms of $E$. Then the representation $\rep$ is \emph{order indecomposable} if $\{0\}$ and $E$ are the only $G$-invariant projection bands in $E$.
\end{definition}

Note that $G$ acts on $E$ as lattice automorphisms precisely when it acts as positive operators; hence one can also refer to such a representation as a \emph{positive representation} of $G$ on $E$.

It is a non-trivial fact that an order indecomposable positive representation of a finite group on a Dedekind complete vector lattice is finite dimensional; this follows from \cite[Theorem~3.14]{deJeu-Wortel12}. It is also possible to show that every finite dimensional positive representation of a finite group on an Archimedean vector lattice is an order direct sum of order indecomposable positive representations, where the latter can be classified \cite[Theorem~4.10 and Corollary~4.11]{deJeu-Wortel12}. This answers our question about disintegrating finite dimensional positive representations of finite groups.  The matter is still open for infinite dimensional positive representations of finite groups.

For positive representations of an abstract group $G$ on a normalized Banach sequence space $E$, it is true that the representation is a (generally infinite) order direct sum of order indecomposable positive representations; see \cite[Theorem~ 5.7]{deJeu-Wortel14}. If the group has compact image in the strong operator topology, and $E$ has order continuous norm (this includes the spaces $\ell^{p}$ for $1\leq p<\infty$), then these order indecomposable positive representations are all finite dimensional. This is an analogue of the well known theorem for unitary representations of compact Hausdorff groups.

At the time of writing, not much (if anything) seems to be known about disintegration of positive group representations into order indecomposable representations beyond the above results. These are both concerned with compact groups and, analogously to the unitary case, the disintegration is then a discrete summation. In the present paper, a technically more challenging context is considered, and we consider a class of positive representations where the disintegration can be of a truly continuous nature. This disintegration is obtained in two main steps.

This first main step\textemdash we omit the necessary conditions for the sake of clarity\textemdash consists of a disintegration into order indecomposable representations of the representations of a locally compact Hausdorff group $G$ as isometric lattice automorphisms of $\Ell^{p}$-spaces, as canonically associated with an action of $G$ on a Borel probability space $(\ts,\mu)$ with invariant measure $\mu$. Such a representation is order indecomposable precisely when $\mu$ is ergodic. One might therefore hope that, somehow, a disintegration of $\mu$ into ergodic measures $\lambda$ will yield a disintegration of the canonical positive representation on $\Ell^{p}(\ts,\mu)$ in terms of the order indecomposable canonical representations on $\Ell^{p}(\ts,\lambda)$ for ergodic $\lambda$. This can in fact be done, and Theorem~\ref{thm:disintegrating_space_actions} clarifies what `somehow' is here: in a $G$-equivariant fashion, the Banach lattice $\Ell^p(\ts,\mu)$ is an $\kstext$-direct integral of the Banach lattices $\Ell^p(\ts,\lambda)$ for ergodic $\lambda$, where the $\kstext$-direct integral is with respect to a Borel probability measure on the set of ergodic measures. Apart from the framework of direct integrals of Banach spaces as such, which could also have representation theoretical applications in other contexts, the principal ingredient for the proof of this result is a factorization of the integral over $\ts$ with respect to $\mu$ in terms of those with respect to the ergodic measures; see Theorem~\ref{thm:factorization}. In spite of its aesthetic appeal, we are not aware of a reference for the pertinent formula in this Tonelli--Fubini-type theorem, which itself is based on the aforementioned disintegration of $\mu$ into ergodic measures.

Aside, let us briefly mention that there is no uniqueness statement concerning the isomorphism classes occurring in the disintegration Theorem~\ref{thm:disintegrating_space_actions}. Given the subtleties necessary in the study of Type I groups and $\Cstar$-algebras in the Hilbert space context, it does not seem to be realistic to strive for such a result at this moment.

The second main step consists of removing the hypothesis that the given representation of $G$ on $\Ell^p(\ts,\mu)$ originate from an action on the underlying probability space $(\ts,\mu)$. Under mild conditions, it can be shown that an action of $G$ on $\Ell^p(\ts,\mu)$ as isometric lattice automorphisms that leave the constants fixed, can be transferred to another model where there \emph{is} such an underlying action; see Theorem~\ref{thm:transfer}. We are then back in the ergodic theoretical context, and combination with the result from the first main step yields a disintegration result for these representations into order indecomposable representations as well. The pertinent Theorem~\ref{thm:disintegrating_Markov_actions} should be thought of as an ordered relative of the general unitary disintegration result \cite[Theorem~18.7.6]{Dixmier77}. The key transfer Theorem~\ref{thm:transfer} for this step is strongly inspired by the material in \cite{EiFaHaNa15}, and it is a pleasure to thank Markus Haase for drawing our attention to this.

It seems that, for practical purposes, our main results have a rather broad range of validity; we will now make a few technical remarks to support this statement. One of the re-occurring hypotheses in this paper is that a space be Polish (i.e.\ separable and metrizable in a complete metric). For a locally compact Hausdorff space, being Polish is equivalent to being second countable; see \cite[Theorem~5.3]{Kechris95}. Thus all Lie groups are Polish (for a more extensive list of Polish groups see \cite[Section~1.3]{Becker-Kechris96}), and, more generally, so are all differentiable manifolds. Therefore, the factorization Theorem~\ref{thm:factorization} and the disintegration Theorem~\ref{thm:disintegrating_space_actions}\textemdash for which the underlying Polish space $\ts$ need not even be locally compact\textemdash are applicable to all actions of Lie groups on differentiable manifolds. In a similar vein, we note that it follows from the combination of \cite[Vol. I, Exercise~1.12.102]{Bogachev07} and \cite[Vol. II, Example~6.5.2]{Bogachev07} that the measure space $(\ts,\mu)$ is always separable whenever $\ts$ is a separable metric space and $\mu$ is a Borel probability measure on $\ts$. Therefore, the disintegration Theorem~\ref{thm:disintegrating_Markov_actions}, where this separability is assumed, covers several commonly occurring situations as well.

This paper is organized as follows.

In Section~\ref{sec:notation_and_terminology}, we introduce some terminology and notation, and establish a few preliminary results on order indecomposability and strong continuity of canonical representations of groups on $\Ell^p$-spaces.

The first part of Section~\ref{sec:direct_integrals} is concerned with an extension of part of the theory of direct integrals of Banach spaces and Banach lattices in \cite{HaLeRa91}. The measurable families of norms figuring in \cite{HaLeRa91} are not sufficient for our context, where a measurable family of semi-norms occurs naturally. Moreover, our measures need not be complete. We generalize the theory accordingly. After that, $\Ell^p$-direct integrals of representations are introduced, and possible perspectives in representation theory are briefly discussed. The usual direct integrals of representations on separable Hilbert spaces are shown to be special cases of the general formalism.

Section~\ref{sec:disintegrating_space_actions} contains the results of the first main step, i.e.\ the factorization Theorem~\ref{thm:factorization} and the disintegration Theorem~\ref{thm:disintegrating_space_actions} in the case of an action on the underlying measure space. As a worked example, we give a concrete disintegration of the representations of the unit circle on the $\Ell^p$-spaces of the closed unit disk, as these are canonically associated with the action of the circle on this disk as rotations.

Section~\ref{sec:disintegrating_Markov_actions} is concerned with disintegrating representations when there is (initially) no action on an underlying measure space. Its main result, the disintegration Theorem~\ref{thm:disintegrating_Markov_actions}, is our ordered relative of the general unitary disintegration in \cite[Theorem~18.7.6]{Dixmier77}.

Section~\ref{sec:perspective} contains some remarks on the current status of the theory and on possible further developments.

\medskip
\emph{Reading guide.} Even though this paper was motivated by a representation theoretical question in an ordered context (as is reflected in the terminology of the present section), the interpretation of the main results as answers to this question is almost just an afterthought. The reader can find definitions and terminology concerning vector lattices in e.g.\ \cite{Zaanen}, but, if so desired, the limited number of occurrences of this terminology in the sequel that go beyond the notions of a vector lattice and a lattice homomorphism can also safely be ignored. The paper can then be read from a primarily ergodic theoretical, functional analytical, or general representation theoretical perspective.

\section{Preliminaries}\label{sec:notation_and_terminology}

In this section, we fix terminology and notation, and establish a few preliminary results on group representations.

\subsection{Terminology and notation}
All vector spaces, except the Hilbert spaces in Section~\ref{Hilbert space direct integrals}, are over the real numbers. This is no essential restriction, as the results in this paper extend to complex $\Ell^p$-spaces and (in Section~\ref{sec:direct_integrals}) to complex Banach spaces and Banach lattices in an obvious manner, but this convention reduces the necessary terminology and size of the proofs.

Topological spaces are not assumed to be Hausdorff. A topological space is called locally compact if every point has an open neighbourhood with compact closure.

If $\ts$ is a topological space, then $\Cc(\ts)$ and $\Cb(\ts)$ denote the continuous functions on $\ts$ that have compact support and that are bounded, respectively.

Topological groups are groups for which inversion is continuous and multiplication is continuous in two variables simultaneously. They are not assumed to be Hausdorff or locally compact.

A \emph{topological dynamical system} is a pair $(G,\ts)$, where the topological group $G$ acts as homeomorphisms on the topological space $\ts$ such that the map $(g,x)\mapsto gx$ is continuous from $G\times \ts$ to $\ts$. The system is called Polish if both $G$ and $\ts$ are Polish.

A measure on a $\sigma$-algebra is $\sigma$-additive and takes values in $[0,\infty]$. It is not assumed to be $\sigma$-finite. If $\ts$ is a topological space, then a Borel measure is a measure on the Borel $\sigma$-algebra of $\ts$, without any further assumptions.

For $(\ts,\mu)$ a measure space and $1\leq p\leq\infty$, $\La^{p}(\ts,\mu)$ denotes the semi-normed space of all $p$-integrable extended functions $f:\ts\to\R\cup\{-\infty,\infty\}$, and $\Ell^{p}(\ts,\mu)$ denotes the Banach lattice of all equivalence classes of extended functions $f\in\La^{p}(\ts,\mu)$, under $\mu$-almost everywhere equality. We will often work with an extended function $f$ that is an element of $\La^{p}(\ts,\mu)$ for different measures $\mu$ on $\ts$, and we will consider the equivalence classes of $f$ in $\Ell^{p}(\ts,\mu)$ for these $\mu$. It is essential to keep a clear distinction between these objects, so (with the exception of Section~\ref{sec:disintegrating_Markov_actions}) we do not identify functions that are equal almost everywhere, and, when $p$ is fixed, we denote the equivalence class in $\Ell^{p}(\ts,\mu)$ of an element $f\in\La^{p}(\ts,\mu)$ by $[f]_{\mu}$.

In the same vein, if $\vs$ is a vector space, $\w$ is an index, and ${\norm{\,\cdot\,}}_{\w}$ is a semi-norm on $\vs$, then we denote the equivalence class of $x\in\vs$ in $\vs/\ker({\norm{\,\cdot\,}}_{\w})$  by $[x]_{\w}$.

If $Y$ is a subset of $\ts$, then $\ind_Y$ is the characteristic function of $Y$ on $\ts$.

If $B$ is a normed space, then $\La(B)$ denotes the bounded linear operators on $B$.

\subsection{Preliminaries on group representations}\label{subsec:group_actions}

Suppose that the abstract group $G$ acts as measure preserving transformations on the measure space $(\ts,\mu)$. We then say that $\mu$ is a \emph{$G$-invariant measure}. In this case, for every $1\leq p<\infty$, $\rep_\mu(g)[f]_\mu:=[x\mapsto f(g^{-1}x)]_\mu$ is a well-defined representation of $G$ as isometric lattice isomorphisms of $\Ell^p(\ts,\mu)$. We will refer to this representation (and to similarly defined representations on other function spaces) as the \emph{canonical representation} on $\Ell^p(\ts,\mu)$; in the literature this is also called a Koopman representation.

A measurable subset $Y$\ of $\ts$ is \emph{$\mu$-essentially $G$-invariant} if $\mu(g Y\!\mathbin{\Delta} Y)=0$ for all $g\in G$, where $Y\mathbin{\Delta} g Y:=(Y\cup g Y)\setminus(Y\cap g Y)$ is the symmetric difference of $Y$ and $g Y$. An \emph{ergodic measure} on $\ts$ is a $G$-invariant measure $\mu$ such that $\mu(Y)=0$ or $\mu(Y)=1$ for each $\mu$-essentially $G$-invariant measurable subset $Y$ of $\ts$.

We will now investigate the relationship between the ergodicity of the measure $\mu$ and the order indecomposability of $\rep_{\mu}:G\to\La(\Ell^{p}(X,\mu))$. This is essential for the representation theoretical interpretation of our disintegration results, but not for these results as such, so that the reader with a primarily ergodic theoretical or functional analytic nterest can skip the next two results. We need the following lemma, which follows easily from~\cite[p.~44]{Zaanen}.

\begin{lemma}\label{lem:bands_in_Lp}
Let $(\ts,\mu)$ be a $\sigma$-finite measure space, and let $1\leq p\leq\infty$.
If $Y\subseteq\ts$ is measurable, let
\begin{align*}
B_Y=\left\{[f]_\mu\in\Ellp(\ts,\mu) :  f(y)=0 \textrm{ for $\mu$-almost all }y\in Y\right\}.
\end{align*}
Then $B_Y $ is a projection band in $\Ellp(\ts,\mu)$, and all projection bands in $\Ellp(\ts,\mu)$ are of this form.
If $Y_1$ and $Y_2$ are measurable subsets of $\ts$, then $B_{Y_1}=B_{Y_2}$ if and only if $\mu(Y_1\Delta Y_2)=0$.
\end{lemma}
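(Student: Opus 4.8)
The plan is to establish the three assertions in turn, using the standard facts about bands, disjoint complements and band projections recalled in \cite[p.~44]{Zaanen}, together with the fact that $\sigma$-finiteness of $\mu$ makes $\Ellp(\ts,\mu)$ Dedekind complete. Throughout I will exploit that band projections on $\Ellp(\ts,\mu)$ are, concretely, multiplications by characteristic functions.

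To see that each $B_{Y}$ is a projection band, I would exhibit the band projection directly: the map $P_{Y}\colon[f]_{\mu}\mapsto[\ind_{\ts\setminus Y}\,f]_{\mu}$ is well defined, linear, idempotent, and satisfies $0\le P_{Y}\le I$, hence is a band projection by \cite[p.~44]{Zaanen}. Its range is $\{[g]_{\mu}:g=0\text{ $\mu$-a.e. on }Y\}=B_{Y}$, and its kernel is $B_{\ts\setminus Y}$, so $B_{Y}$ is a projection band whose disjoint complement is $B_{\ts\setminus Y}$.

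For the converse, let $B$ be a projection band with band projection $P$, and fix an increasing exhaustion $\ts=\bigcup_{n}\ts_{n}$ with $\mu(\ts_{n})<\infty$. For each $n$ the elements $P[\ind_{\ts_{n}}]_{\mu}$ and $(I-P)[\ind_{\ts_{n}}]_{\mu}$ are disjoint, both lie between $0$ and $[\ind_{\ts_{n}}]_{\mu}$, and sum to $[\ind_{\ts_{n}}]_{\mu}$; this forces $P[\ind_{\ts_{n}}]_{\mu}=[\ind_{A_{n}}]_{\mu}$ for a measurable $A_{n}\subseteq\ts_{n}$, and monotonicity of $P$ gives, after deleting null sets, $A_{n}\subseteq A_{n+1}$ and $A_{n}=A\cap\ts_{n}$, where $A:=\bigcup_{n}A_{n}$; set $Y:=\ts\setminus A$. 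Since $[\ind_{A_{n}}]_{\mu}\in B$ and $B$ is a band, every $[f]_{\mu}\in B_{Y}$ lies in $B$ (it suffices to treat $f\ge 0$): the truncations $[f\wedge k\ind_{\ts_{n}}]_{\mu}$ are dominated by $k[\ind_{A_{n}}]_{\mu}$, as $f$ vanishes $\mu$-a.e. on $Y\supseteq\ts_{n}\setminus A_{n}$, hence lie in $B$, and they increase to $[f]_{\mu}$ as $k,n\to\infty$. Thus $B\supseteq B_{Y}$, and applying the same reasoning to $I-P$, for which $(I-P)[\ind_{\ts_{n}}]_{\mu}=[\ind_{\ts_{n}\cap Y}]_{\mu}$, yields $B^{\mathrm{d}}\supseteq B_{\ts\setminus Y}$, so that $B=(B^{\mathrm{d}})^{\mathrm{d}}\subseteq B_{\ts\setminus Y}^{\mathrm{d}}=B_{Y}$. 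Hence $B=B_{Y}$. Finally, if $\mu(Y_{1}\Delta Y_{2})=0$ then the defining conditions of $B_{Y_{1}}$ and $B_{Y_{2}}$ coincide, while if $\mu(Y_{1}\setminus Y_{2})>0$ one uses $\sigma$-finiteness to find a measurable $Z\subseteq Y_{1}\setminus Y_{2}$ with $0<\mu(Z)<\infty$ and observes that $[\ind_{Z}]_{\mu}\in B_{Y_{2}}\setminus B_{Y_{1}}$; this gives the last claim.

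The one step that requires care is the identification of $B$ in the middle paragraph: one only controls $P$ on the truncations $[\ind_{\ts_{n}}]_{\mu}$, and since $\ind_{A}$ itself need not belong to $\Ellp(\ts,\mu)$ when $\mu(A)=\infty$, the passage to a global description of the band $B$ must go through the finite pieces $[\ind_{A_{n}}]_{\mu}$ and through disjoint complements, with $\sigma$-finiteness used both to exhaust $\ts$ and to supply the necessary test functions; for $p=\infty$ this difficulty evaporates, since then $\ind_{A}\in\Ell^{\infty}(\ts,\mu)$.
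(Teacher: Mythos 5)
Your proof is correct, and it is exactly the standard argument that the paper leaves implicit by citing \cite[p.~44]{Zaanen} without giving a proof: exhibit $P_Y$ as multiplication by $\ind_{\ts\setminus Y}$, recover $Y$ from a general band projection via the $\sigma$-finite exhaustion, and separate $B_{Y_1}$ from $B_{Y_2}$ with a characteristic function of finite positive measure. The only place to add a word is the assertion that $A_n=A\cap\ts_n$: monotonicity of $P$ alone gives $A_n\subseteq A_{n+1}$, while the inclusion $A_{n+1}\cap\ts_n\subseteq A_n$ follows from $P\leq I$ applied to $[\ind_{\ts_{n+1}}]_\mu-[\ind_{\ts_n}]_\mu$ (or from $Pu_{n+1}\wedge u_n=Pu_n$), a one-line detail that does not affect the argument.
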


Recall that the measure algebra $\measalg$ of $(X,\mu)$ consists of the equivalence classes $[Y]_\mu$ of measurable subsets $Y$ of $\ts$, where $Y_1$ and $Y_2$ are equivalent when $\mu(Y_1\Delta Y_2)=0$. Lemma~\ref{lem:bands_in_Lp} shows that there is a bijection between the elements of $\measalg$ and the projection bands in $\Ell^{p}(\ts,\mu)$, where an element of $[Y]_\mu$ of the measure algebra corresponds to the well-defined band $B_{[Y]_\mu}:=B_{Y}$.

If an abstract group $G^\prime$ acts as positive operators on $\Ell^{p}(\ts,\mu)$, then it permutes the projection bands in $\Ell^{p}(\ts,\mu)$. If, as is the case for our group $G$, this positive action originates canonically from an action as  measure preserving transformations on $(\ts,\mu)$, then $G$ also acts canonically on $\measalg$: for $g\in G$ and $[Y]\in\measalg$, the action $g[Y]_\mu:=[gY]_\mu$ is well-defined. These two actions are compatible with the map $[Y]_\mu\mapsto B_{[Y]_\mu}$. This is the content of part \eqref{prop:characterization_band_irreducibility_1} of the next result, and it is exploited in parts~\eqref{prop:characterization_band_irreducibility_2},~\eqref{prop:characterization_band_irreducibility_3}, and~\eqref{prop:characterization_band_irreducibility_4}.

\begin{proposition}\label{prop:characterization_band_irreducibility}
Let $G$ be an abstract group, acting as measure preserving transformations on a $\sigma$-finite measure space $(\ts,\mu)$, and let $1\leq p\leq \infty$.
\begin{enumerate}
\item\label{prop:characterization_band_irreducibility_1} If  $[Y]_\mu\in\measalg$, and $B_{[Y]_\mu}$ is the corresponding projection band in $\Ell^p(\ts,\mu)$, then $\rep_{\mu}(g)B_{[Y]_\mu}=B_{g[Y]_\mu}$ \textup{(}$g\in G$\textup{)}.
\item\label{prop:characterization_band_irreducibility_2} For $g\in G$, the projection bands in $\Ell^p(\ts,\mu)$ that are fixed by $g$ correspond to the fixed points of $g$ in $\measalg$.
\item\label{prop:characterization_band_irreducibility_3} The $G$-invariant projection bands in $\Ell^p(\ts,\mu)$ correspond to the fixed points of $G$ in $\measalg$.
\item\label{prop:characterization_band_irreducibility_4} The canonical representation $\rep_{\mu}:G\to\La(\Ell^{p}(\ts,\mu))$ of $G$ as isometric lattice automorphisms on $\Ell^p(\ts,\mu)$ is order indecomposable if and only if $\mu$ is ergodic.
\end{enumerate}
\end{proposition}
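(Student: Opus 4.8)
The plan is to prove the four assertions in sequence, with (1) doing essentially all the work and (2)--(4) following by bookkeeping.

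For part (1), let $[Y]_\mu \in \measalg$ and fix $g \in G$. Since $\rep_\mu(g)$ is a lattice automorphism of $\Ell^p(\ts,\mu)$, it maps projection bands to projection bands, so $\rep_\mu(g)B_{[Y]_\mu}$ is a projection band and by Lemma~\ref{lem:bands_in_Lp} it equals $B_{[Z]_\mu}$ for a unique $[Z]_\mu \in \measalg$; the claim is that $[Z]_\mu = g[Y]_\mu = [gY]_\mu$. I would verify this directly on the generating vectors: for a measurable $E \subseteq \ts$, the class $[\ind_E]_\mu$ lies in $B_{[Y]_\mu}$ precisely when $\mu(E \cap Y) = 0$, and $\rep_\mu(g)[\ind_E]_\mu = [\ind_E \circ g^{-1}]_\mu = [\ind_{gE}]_\mu$. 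Since such characteristic-function classes are (lattice-theoretically, or just by density of simple functions for $p<\infty$ and directly for $p=\infty$) enough to determine a band, one gets $[\ind_{gE}]_\mu \in \rep_\mu(g)B_{[Y]_\mu}$ iff $[\ind_E]_\mu \in B_{[Y]_\mu}$ iff $\mu(E \cap Y)=0$ iff $\mu(gE \cap gY)=0$ (using that $g$ is measure preserving), i.e.\ iff $[\ind_{gE}]_\mu \in B_{[gY]_\mu}$. Hence $\rep_\mu(g)B_{[Y]_\mu} = B_{[gY]_\mu} = B_{g[Y]_\mu}$. A cleaner alternative, which I would actually prefer to write, is to note that $B_Y = \{[f]_\mu : f = 0 \text{ $\mu$-a.e.\ on } Y\}$ is sent by $\rep_\mu(g)$ to $\{[f\circ g^{-1}]_\mu : f = 0 \text{ $\mu$-a.e.\ on } Y\} = \{[h]_\mu : h = 0 \text{ $\mu$-a.e.\ on } gY\} = B_{gY}$, where the middle equality is the substitution $h = f \circ g^{-1}$ together with $\mu$-invariance of $g$.

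Parts (2) and (3) are then immediate formal consequences. For (2): under the bijection $[Y]_\mu \mapsto B_{[Y]_\mu}$ of Lemma~\ref{lem:bands_in_Lp}, part (1) says the $G$-action on bands is carried to the $G$-action on $\measalg$, so $\rep_\mu(g)B_{[Y]_\mu} = B_{[Y]_\mu}$ iff $B_{g[Y]_\mu} = B_{[Y]_\mu}$ iff (again by Lemma~\ref{lem:bands_in_Lp}) $g[Y]_\mu = [Y]_\mu$ in $\measalg$; that is, the $g$-fixed bands correspond exactly to the $g$-fixed points of $\measalg$. Part (3) is (2) intersected over all $g \in G$: a band is $G$-invariant iff it is fixed by every $g$, iff the corresponding element of $\measalg$ is fixed by every $g$.

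For part (4), $\rep_\mu$ is order indecomposable iff the only $G$-invariant projection bands are $\{0\}$ and $\Ell^p(\ts,\mu)$, which by (3) and Lemma~\ref{lem:bands_in_Lp} happens iff the only $G$-fixed points of $\measalg$ are $[\emptyset]_\mu$ and $[\ts]_\mu$ (these correspond to $\{0\} = B_{[\ts]_\mu}$ and $\Ell^p = B_{[\emptyset]_\mu}$ respectively). Now $[Y]_\mu$ is a $G$-fixed point of $\measalg$ exactly when $\mu(gY \mathbin{\Delta} Y) = 0$ for all $g \in G$, i.e.\ when $Y$ is $\mu$-essentially $G$-invariant; and $[Y]_\mu \in \{[\emptyset]_\mu,[\ts]_\mu\}$ means $\mu(Y) \in \{0,\mu(\ts)\}$. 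In the probability-normalized situation of the statement ($\mu$ a $G$-invariant measure with the ergodicity condition phrased via $\mu(Y)\in\{0,1\}$), this says precisely that every $\mu$-essentially $G$-invariant set is null or conull, which is the definition of $\mu$ being ergodic. Thus $\rep_\mu$ is order indecomposable iff $\mu$ is ergodic. The only point requiring a word of care is the normalization: the definition of ergodic measure given earlier is stated for probability measures, so I would either assume $\mu(\ts)=1$ here or, more generally, observe that for finite $\mu$ the condition ``$\mu(Y)\in\{0,\mu(\ts)\}$ for all essentially invariant $Y$'' is the natural reading, and that this is what the correspondence forces. I do not anticipate a genuine obstacle anywhere; the substantive content is the computation in part (1), and the rest is translating through the two bijections of Lemma~\ref{lem:bands_in_Lp}.
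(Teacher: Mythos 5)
Your proof is correct and takes essentially the same route as the paper: part (1) by transporting the a.e.-vanishing condition on $Y$ via measure preservation (the paper phrases this with $\supp f$ and obtains the reverse inclusion by applying the same argument to $g^{-1}$, which is your substitution $h=f\circ g^{-1}$ in two steps), and parts (2)--(4) as formal consequences of the bijection in Lemma~\ref{lem:bands_in_Lp}. Your closing remark about the probability normalization in (4) is a fair point that the paper leaves implicit, since its definition of ergodicity is phrased with $\mu(Y)\in\{0,1\}$ while the proposition is stated for $\sigma$-finite measures.
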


\begin{proof}
As for \eqref{prop:characterization_band_irreducibility_1}, let $[f]_\mu\in B_{[Y]_\mu}$, so that $\mu(\{Y\cap\,\supp f\})=0$. By the invariance of $\mu$, we have $\mu(\{gY\cap\, g\,\supp f\})=0$. Since $g\,\supp f=\supp\,gf$, we see that $\mu(\{gY\cap\, \supp \,gf\})=0$, i.e.\ $\rep_{\mu}(g)[f]_\mu\in B_{gY}=B_{[gY]_\mu}=B_{g[Y]_\mu}$. Hence $\rep_{\mu}(g)B_{[Y]_\mu}\subseteq B_{g[Y]_\mu}$. Then also $\rep_{\mu}(g)^{-1} B_{g[Y]_\mu}=\rep_{\mu}(g^{-1})B_{[gY]_\mu}\subseteq B_{g^{-1}[gY]_\mu)}=B_{[Y]_\mu}$, so that $B_{g[Y]_\mu}\subseteq \rep_{\mu}(g)B_{[Y]_\mu}$.

The parts~\eqref{prop:characterization_band_irreducibility_2} and~\eqref{prop:characterization_band_irreducibility_3} are immediate from \eqref{prop:characterization_band_irreducibility_1}.

As for \eqref{prop:characterization_band_irreducibility_4}, we know from \eqref{prop:characterization_band_irreducibility_2} that $\rep_{\mu}$ is order indecomposable if and only if $[\emptyset]_\mu$ and $[X]_\mu$ are the only points of $\measalg$ that are fixed by the $G$-action.  The latter condition is equivalent to the ergodicity of $\mu$.
\end{proof}

As a further preliminary, we will now investigate the strong continuity of canonical representations of topological groups on spaces of continuous functions with compact support and on $\Ell^p$-spaces, the latter being our principal point of interest. The matter is usually considered in the context of a locally compact Hausdorff group and a locally compact Hausdorff space (see e.g.~\cite[p.~68]{Folland95}), but more can be said. The results clarify natural questions concerning our context (see e.g.\ Corollary~\ref{cor:automatic_strong_continuity_Polish_pair}), and, in view of possible future study of canonical group actions on $\Ell^p$-spaces, this seems a natural moment to collect a few basic facts in a sharp formulation.

A reference for the following result would be desirable, but we are not aware of one for the statement in this generality. The left and right uniform continuity of compactly supported continuous functions on a locally compact Hausdorff group (see\ \cite[Proposition~2.6]{Folland95}) are special cases.

\begin{lemma}\label{lem:uniform_continuity_sup_norm}
Let $(G,\ts)$ be a topological dynamical system. Then the canonical representation $\rep$ of $G$ as isometric lattice automorphisms of $(\Cc(\ts),{\Vert\,\cdot\,\Vert}_\infty)$ is strongly continuous.
\end{lemma}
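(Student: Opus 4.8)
The plan is to reduce the claim to a uniform continuity statement near the identity of $G$, using that $\Cc(\ts)$ carries the sup-norm and that group elements act isometrically. Fix $f\in\Cc(\ts)$ and $g_0\in G$; by the cocycle identity $\rep(g)f-\rep(g_0)f=\rep(g_0)\bigl(\rep(g_0^{-1}g)f-f\bigr)$ and the fact that $\rep(g_0)$ is an isometry of $(\Cc(\ts),{\Vert\,\cdot\,\Vert}_\infty)$, it suffices to prove strong continuity at the identity $e\in G$, i.e.\ that for every $\varepsilon>0$ there is a neighbourhood $U$ of $e$ with $\Vert\rep(g)f-f\Vert_\infty<\varepsilon$ for all $g\in U$. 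So fix $f\in\Cc(\ts)$ with compact support $K$, and fix $\varepsilon>0$.

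The heart of the argument is a compactness-and-equicontinuity step exploiting the joint continuity of the action map $a\colon G\times\ts\to\ts$, $a(g,x)=gx$. First I would choose, using local compactness of $G$, a relatively compact open neighbourhood $V_0$ of $e$; then $L:=\overline{V_0}\cdot K$ is compact (continuous image of the compact set $\overline{V_0}\times K$), and for $g\in V_0$ the function $\rep(g)f=x\mapsto f(g^{-1}x)$ is supported in $gK\subseteq L$ (after possibly also shrinking so that $\overline{V_0}^{-1}=\overline{V_0}$, or just enlarging $L$ to include $\overline{V_0}^{-1}K$). Thus all the functions $\rep(g)f$ for $g$ near $e$, together with $f$, vanish off the fixed compact set $L$, and it remains to control $\sup_{x\in L}\abs{f(g^{-1}x)-f(x)}$. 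For each $x\in L$, joint continuity of $a$ at $(e,x)$ and continuity of $f$ give an open neighbourhood $W_x\ni e$ in $G$ and an open neighbourhood $O_x\ni x$ in $\ts$ such that $\abs{f(h^{-1}y)-f(x)}<\varepsilon/2$ whenever $h\in W_x$ and $y\in O_x$ (here I use that $(h,y)\mapsto h^{-1}y$ is continuous, so that $f(h^{-1}y)\to f(x)$ as $(h,y)\to(e,x)$; note continuity of inversion is part of the definition of a topological group). In particular $\abs{f(y)-f(x)}<\varepsilon/2$ for $y\in O_x$ as well. Cover the compact set $L$ by finitely many $O_{x_1},\dots,O_{x_n}$ and put $U:=V_0\cap\bigcap_{i=1}^n W_{x_i}$, a neighbourhood of $e$. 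Then for $g\in U$ and any $x\in L$, pick $i$ with $x\in O_{x_i}$: since $g\in W_{x_i}$ we get $\abs{f(g^{-1}x)-f(x_i)}<\varepsilon/2$, and also $\abs{f(x)-f(x_i)}<\varepsilon/2$, whence $\abs{f(g^{-1}x)-f(x)}<\varepsilon$. Outside $L$ both functions vanish, so $\Vert\rep(g)f-f\Vert_\infty\leq\varepsilon$ for all $g\in U$, which completes the proof.

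The main obstacle I anticipate is the bookkeeping around compact supports: one must ensure the supports of $\rep(g)f$ stay inside a single compact set as $g$ ranges over a neighbourhood of $e$, which is exactly where local compactness of $G$ enters (a topological group need not be locally compact in the paper's conventions, but a topological dynamical system carries no such hypothesis either — so I would either invoke local compactness if it is available in this context, or, more carefully, observe that for the sup-norm estimate one only needs $\rep(g)f$ to be uniformly small outside some compact set, which already follows from continuity of $f$ and of the action without any local compactness: the set $\{x : \abs{f(g^{-1}x)}\geq\varepsilon\}=g\{x:\abs{f(x)}\geq\varepsilon\}$ is contained in $g K$). A clean way to sidestep local compactness entirely is to work directly with the compact set $K$ and cover $K$ (rather than $L$) by neighbourhoods: for $x\in K$ choose $W_x,O_x$ as above controlling both $\abs{f(h^{-1}y)-f(x)}$ and $\abs{f(y)-f(x)}$, extract a finite subcover $O_{x_1},\dots,O_{x_n}$ of $K$, set $U=\bigcap W_{x_i}$, and then argue separately on $K$ (using the cover) and on $\ts\setminus K$, where $f$ vanishes and $\rep(g)f$ is supported in $gK$; for $g\in U$ one checks $gK\subseteq\bigcup_i O_{x_i}$ up to a further shrinking, or simply notes that if $gx\notin\bigcup O_{x_i}$ for all small $g$ one still bounds $\abs{f(g^{-1}(gx))}=\abs{f(x)}$... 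I will organize this so that the finitely many neighbourhoods chosen from the cover of $K$ suffice, which avoids invoking local compactness of $G$ altogether and keeps the proof in the stated generality.
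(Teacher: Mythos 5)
There is a genuine gap. Your primary argument relies on local compactness of $G$ (to form the compact set $L=\overline{V_0}\cdot K$ containing all the supports $g\,\supp f$ for $g$ near $e$), but the lemma is stated for an arbitrary topological dynamical system, and the paper's conventions explicitly do \emph{not} assume topological groups to be locally compact. Indeed, the paper stresses that this statement holds in exactly this generality, so the local-compactness route proves a strictly weaker result. You correctly sense this problem in your final paragraph, but the proposed workaround is left unfinished: after covering only $K=\supp f$ you must still estimate $\abs{f(g^{-1}x)-f(x)}$ for the points $x\notin K$ with $g^{-1}x\in K$ (there $f(x)=0$ but $f(g^{-1}x)$ need not be small), and your sketch (``one checks $gK\subseteq\bigcup_i O_{x_i}$ up to a further shrinking, or \dots'') does not actually produce such an estimate.

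The missing idea, which is how the paper closes exactly this gap, is a symmetrization device: choose each neighbourhood $U_x$ of $e$ to be \emph{symmetric}. The covering argument on $K$ then gives a symmetric $U$ with $\abs{f(g^{-1}x)-f(x)}<\epsilon$ for all $x\in K$ and $g\in U$; replacing $g$ by $g^{-1}\in U$ yields $\abs{f(gx)-f(x)}<\epsilon$ for all $x\in K$ and $g\in U$ as well. Now if $x\notin K$ but $y:=g^{-1}x\in K$, then $\abs{f(g^{-1}x)-f(x)}=\abs{f(y)-f(gy)}<\epsilon$ by the second estimate, and for the remaining $x$ both terms vanish. With this modification your covering argument on $K$ alone does yield the lemma in full generality; without it, the case $x\notin K$, $g^{-1}x\in K$ is simply not covered.
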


\begin{proof}
It is sufficient to prove that $g\mapsto\rep(g)f$ is continuous at~$e$ for each $f\in \Cc(\ts)$. Let $\epsilon>0$. For each $x\in \ts$, there exist a symmetric open neighbourhood~$U_x$ of~$e$ in~$G$ and an open neighbourhood~$V_x$ of~$x$ in~$\ts$ such that $|f(g^{-1}y)-f(x)|<\epsilon/2$ for all $g\in U_x$ and $y\in V_x$. Let $\bigcup_{i=1}^n V_{x_i}$ be a finite cover of $\supp f$, and put $U=\bigcap_{i=1}^n U_{x_{i}}$. If $x\in\supp f$, say $x\in V_{x_{i_0}}$, and $g\in U\subseteq U_{x_{i_0}}$, then $|f(g^{-1}x)-f(x)|\leq |f(g^{-1}x)-f(x_{i_0})| + |f(x)-f(x_{i_0})|<\epsilon/2 + \epsilon /2 =\epsilon$. Since~$U$ is symmetric, we also have $|f(gx)-f(x)|<\epsilon$ for all $x\in\supp f$ and $g\in U$. Therefore, if $g\in U$ and $x\in\ts$ are such that $g^{-1}x\in\supp f$, we have $|f(g^{-1}x)-f(x)|=|f(g(g^{-1}x))-f(g^{-1}x)|<\epsilon$. We have now shown that, for $g\in U$, $|f(g^{-1}x)-f(x)|<\epsilon$ whenever $x\in\supp f$ or $g^{-1}x\in\supp f$. Since $|f(g^{-1}x)-f(x)|=0$ for all remaining~$x$, we are done.
\end{proof}

\begin{proposition}\label{prop:strong_continuity_basic}
Let $(G,\ts)$ be a topological dynamical system, and assume that $G$ is locally compact. Let~$\mu$ be a Borel measure on~$\ts$ that is finite on compact subsets of~$\ts$. Then, for $1\leq p<\infty$, the canonical representation $\rep_{\mu}$ of~$G$ as possibly unbounded lattice automorphisms of $(\Cc(X),{\Vert\,\cdot\,\Vert}_p)$ is strongly continuous. If~$\mu$ is $G$-invariant, then the canonical representation~$\rep_{\mu}$ of~$G$ as isometric lattice automorphisms of the closure of $\Cc(X)$ in $\Ell^p(X,\mu)$ is strongly continuous.
\end{proposition}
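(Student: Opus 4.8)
The plan is to reduce everything to Lemma~\ref{lem:uniform_continuity_sup_norm}. First I would observe that it suffices to prove strong continuity at the identity $e\in G$, since $\rep_\mu$ is a group representation by isometries: if $g\to e$ implies $\rep_\mu(g)f\to f$ for each $f$ in a dense subset, and the operators are uniformly bounded (here, isometries), then strong continuity at every point follows. For the first assertion, fix $f\in\Cc(\ts)$ and $\epsilon>0$. The key point is to trap the supports of all the translates $\rep_\mu(g)f=(x\mapsto f(g^{-1}x))$ inside a single set of finite $\mu$-measure, uniformly for $g$ near $e$. Since $G$ is locally compact, choose a compact symmetric neighbourhood $W$ of $e$; then $\bigcup_{g\in W}\supp(\rep_\mu(g)f)=W\cdot\supp f$, which is compact (continuous image of $W\times\supp f$ under the action), hence has finite $\mu$-measure, say $\mu(W\cdot\supp f)=:M<\infty$. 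For $g\in W$, the function $\rep_\mu(g)f-f$ is supported in $W\cdot\supp f$, so
\begin{equation*}
\norm{\rep_\mu(g)f-f}_p^p=\int_{W\cdot\supp f}\abs{f(g^{-1}x)-f(x)}^p\,\ud\mu(x)\leq M\cdot\norm{\rep(g)f-f}_\infty^p,
\end{equation*}
and the right-hand side tends to $0$ as $g\to e$ by Lemma~\ref{lem:uniform_continuity_sup_norm}. This proves that $g\mapsto\rep_\mu(g)f$ is continuous at $e$ on $(\Cc(\ts),\norm{\,\cdot\,}_p)$ for every $f\in\Cc(\ts)$, which is the first claim (note no boundedness of the operators on $\Cc(\ts)$ in the $p$-norm is needed, since we only assert continuity of each orbit map, not equicontinuity).

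For the second assertion, assume in addition that $\mu$ is $G$-invariant. Then each $\rep_\mu(g)$ is an isometry of $\Ell^p(\ts,\mu)$, hence restricts to an isometry of the closed subspace $\kfs:=\overline{\Cc(\ts)}^{\,\Ell^p(\ts,\mu)}$ (it maps $\Cc(\ts)$ into itself and is isometric, so it preserves the closure; its inverse $\rep_\mu(g^{-1})$ does likewise, so $\rep_\mu(g)\kfs=\kfs$). Now I have a representation of $G$ by isometries on the Banach space $\kfs$, together with a dense subspace $\Cc(\ts)$ (more precisely its image in $\Ell^p$) on which the orbit maps are continuous at $e$ by the first part. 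The standard $\epsilon/3$-argument then gives strong continuity on all of $\kfs$: given $u\in\kfs$ and $\epsilon>0$, pick $f\in\Cc(\ts)$ with $\norm{u-[f]_\mu}_p<\epsilon/3$, choose a neighbourhood $U$ of $e$ with $\norm{\rep_\mu(g)[f]_\mu-[f]_\mu}_p<\epsilon/3$ for $g\in U$, and then for $g\in U$,
\begin{equation*}
\norm{\rep_\mu(g)u-u}_p\leq\norm{\rep_\mu(g)(u-[f]_\mu)}_p+\norm{\rep_\mu(g)[f]_\mu-[f]_\mu}_p+\norm{[f]_\mu-u}_p<\epsilon,
\end{equation*}
using that $\rep_\mu(g)$ is an isometry for the first term. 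Strong continuity at an arbitrary $g_0$ follows by writing $\rep_\mu(g)-\rep_\mu(g_0)=\rep_\mu(g_0)(\rep_\mu(g_0^{-1}g)-\id)$ and again using isometry.

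The only genuine subtlety, and the step I would be most careful about, is the uniform support control: one must pass from a fixed compact neighbourhood $W$ of $e$ to the assertion that $W\cdot\supp f$ is compact and of finite measure, which is exactly where local compactness of $G$ and the hypothesis that $\mu$ is finite on compacta are used. Everything else is the routine density-plus-uniform-boundedness pattern. I would also remark that in the first assertion one really cannot expect the operators $\rep_\mu(g)$ to be bounded on $(\Cc(\ts),\norm{\,\cdot\,}_p)$ in general when $\mu$ is not invariant, which is why the statement is phrased in terms of ``possibly unbounded'' automorphisms and only asserts continuity of individual orbit maps; the argument above respects this, as the bound $M$ depends on $f$.
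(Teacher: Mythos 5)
Your core estimate is the right one and is exactly the paper's: trap the supports of the nearby translates in a single compact set of finite $\mu$-measure and dominate the $p$-norm by the sup-norm, which tends to $0$ by Lemma~\ref{lem:uniform_continuity_sup_norm}. The treatment of the second assertion (restriction of the isometries to the closure of $\Cc(\ts)$ plus a $3\epsilon$-argument) also matches the paper.

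However, there is a genuine gap in the first assertion. You open by reducing to continuity at $e$ ``since $\rep_\mu$ is a group representation by isometries \dots\ and the operators are uniformly bounded''. For the first assertion $\mu$ is \emph{not} assumed $G$-invariant, so the operators $\rep_\mu(g)$ on $(\Cc(\ts),{\norm{\,\cdot\,}}_p)$ are, as the statement itself warns, possibly unbounded; in particular the identity $\rep_\mu(g)f-\rep_\mu(g_0)f=\rep_\mu(g_0)\bigl(\rep_\mu(g_0^{-1}g)f-f\bigr)$ does not let you pass from continuity at $e$ to continuity at $g_0$, because $\rep_\mu(g_0)$ need not be $p$-norm continuous. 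Your closing remark that ``no boundedness \dots\ is needed'' is in direct tension with the reduction you invoked at the start: as written, your argument establishes continuity of the orbit maps only at $e$, which is weaker than strong continuity. The repair is immediate and is what the paper does: fix an arbitrary $g_0$, choose an open $V\ni e$ with compact closure, note that $g_0\overline{V}\cdot\supp f$ is compact and hence of finite measure, and estimate ${\norm{\rep_\mu(g)f-\rep_\mu(g_0)f}}_p^p\leq \mu(g_0\overline{V}\cdot\supp f)\,{\norm{\rep_\mu(g)f-\rep_\mu(g_0)f}}_\infty^p$ for $g\in g_0U$ with $U\subseteq V$, using Lemma~\ref{lem:uniform_continuity_sup_norm} at $g_0$. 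With that change your proof coincides with the paper's.
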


\begin{proof}
Let $f\in \Cc(\ts)$, $g_0\in G$, and $\epsilon>0$ be given. Choose an open neighbourhood $V$ of $e$ in $G$ with compact closure. Then $g_0\overline V\cdot\supp f$ is compact, hence has finite measure. By Lemma~\ref{lem:uniform_continuity_sup_norm}, there exist an open neighbourhood $U$ of $e$ in $G$ such that $\mu(g_0\overline V\cdot\supp f){\Vert\rep_\mu(g)f-\rep_\mu(g_0)f\Vert}_\infty^p<\epsilon^p$ for all $g\in g_0U$. We may assume that $U\subseteq V$. Then, for $g\in g_0U$,
\begin{align*}
{\Vert \rep_\mu(g)f-\rep_\mu(g_0)f\Vert}_p^p&=\int_\ts {|(\rep_\mu(g)f)(x)-(\rep_\mu(g_0)f)(x)|}^p\,\ud\mu(x)\\
&=\int_{g\,\supp f\cup g_0\,\supp f}{|(\rep_\mu(g)f)(x)-(\rep_\mu(g_0)f)(x)|}^p\,\textup{d}\mu(x)\\
&\leq \int_{g_0\overline V\cdot\supp f}{|(\rep_\mu(g)f)(x)-(\rep_\mu(g_0)f)(x)|}^p\,\ud\mu(x)\\
&\leq \int_{g_0\overline V\cdot\supp f} {\Vert\rep_\mu(g)f-\rep_\mu(g_0)f\Vert}_\infty^p\,\ud\mu(x)\\
&<\epsilon^p.
\end{align*}
The final statement follows from a $3\epsilon$-argument.
\end{proof}

Proposition~\ref{prop:strong_continuity_basic} points at the heart of the matter: under a mild condition on the $G$-invariant Borel measure $\mu$, the natural representation subspace for $G$ in $\Ell^p(\ts,\mu)$ is the closure of $\Cc(X)$. In some cases, this closure equals  $\Ell^p(X,\mu)$, and we include this well-known result for the sake of completeness. For this, recall (see~\cite[Definition~18.4]{Aliprantis-Burkinshaw98}) that a Borel measure $\mu$ on a locally compact Hausdorff space is said to be \emph{regular} if
\begin{equation*}
\mu(K)<\infty\textup{ for all compact subsets }K\textup{ of }\ts,
\end{equation*}
\begin{equation*}
\mu(Y)=\inf\{\mu(V): Y\subseteq V,\,V \textup{ open}\}\textup{ for all Borel subsets }Y\textup{ of }\ts,
\end{equation*}
and
\begin{equation*}
\mu(V)=\sup\{\mu(K) : K\subseteq V,\,K\textup{ compact}\}\textup{ for all open subsets }V\textup{ of }\ts.
\end{equation*}
For such a measure, $\Cc(\ts)$ is dense in  $\Ell^p(\ts,\mu)$; see~ \cite[Theorem~31.11]{Aliprantis-Burkinshaw98}. Combination with Proposition~\ref{prop:strong_continuity_basic} gives the following, generalizing the well-known fact that the left and right regular representations of a locally compact Hausdorff group $G$ on $\Ell^p(G)$ are strongly continuous for $1\leq p<\infty$; see \cite[Proposition~2.41]{Folland95}.

\begin{corollary}\label{cor:strong_continuity_Lp_regular_measures}
Let $(G,\ts)$ be a topological dynamical system, and assume that $G$ is locally compact and that $\ts$ is a locally compact Hausdorff space. Let $\mu$ be a $G$-invariant regular Borel measure on $\ts$. Then, for $1\leq p<\infty$, the canonical representation $\rep_{\mu}$ of $G$ as isometric lattice automorphisms of $\Ell^p(X,\mu)$ is strongly continuous.
\end{corollary}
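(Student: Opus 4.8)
The plan is to combine the density statement for regular Borel measures with the strong continuity result already established for the closure of $\Cc(\ts)$ in $\Ell^p(\ts,\mu)$. Concretely, since $\mu$ is a $G$-invariant regular Borel measure on the locally compact Hausdorff space $\ts$, and $G$ is locally compact, the hypotheses of Proposition~\ref{prop:strong_continuity_basic} are satisfied: a regular Borel measure is by definition finite on compact subsets of $\ts$. Hence the final statement of that proposition applies, and the canonical representation $\rep_\mu$ of $G$ as isometric lattice automorphisms of the closure of $\Cc(\ts)$ in $\Ell^p(\ts,\mu)$ is strongly continuous.

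Next I would invoke the density result cited just above the corollary, namely \cite[Theorem~31.11]{Aliprantis-Burkinshaw-PRA}, which says that for a regular Borel measure $\mu$ on a locally compact Hausdorff space, $\Cc(\ts)$ is dense in $\Ell^p(\ts,\mu)$ for $1\leq p<\infty$. Therefore the closure of $\Cc(\ts)$ in $\Ell^p(\ts,\mu)$ is all of $\Ell^p(\ts,\mu)$. Substituting this into the conclusion of the previous paragraph immediately yields that $\rep_\mu\colon G\to\La(\Ell^p(\ts,\mu))$ is strongly continuous, which is exactly the assertion of the corollary.

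There is essentially no obstacle here: the corollary is a direct specialization obtained by feeding one known density theorem into Proposition~\ref{prop:strong_continuity_basic}. The only point requiring a line of care is verifying that the regularity hypotheses do imply the hypothesis ``$\mu$ finite on compact subsets'' needed for Proposition~\ref{prop:strong_continuity_basic}, but this is immediate from the first clause in the definition of regularity recalled in the excerpt. Thus the proof is a two-sentence deduction, and I would simply write: ``Since $\mu$ is regular, it is finite on compact subsets of $\ts$, so by Proposition~\ref{prop:strong_continuity_basic} the canonical representation of $G$ on the closure of $\Cc(\ts)$ in $\Ell^p(\ts,\mu)$ is strongly continuous. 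By \cite[Theorem~31.11]{Aliprantis-Burkinshaw-PRA}, this closure equals $\Ell^p(\ts,\mu)$, and the claim follows.''
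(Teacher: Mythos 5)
Your proof is correct and is precisely the argument the paper itself gives (in the sentence preceding the corollary): regularity gives finiteness on compact sets, so Proposition~\ref{prop:strong_continuity_basic} yields strong continuity on the closure of $\Cc(\ts)$ in $\Ell^p(\ts,\mu)$, and the cited density theorem identifies that closure with $\Ell^p(\ts,\mu)$. No gaps; nothing further to add.
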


Although \cite[p.~68]{Folland95}\textemdash where it is assumed that $G$ is Hausdorff\textemdash mentions that the above result holds, and it is likewise stated\textemdash for locally compact second countable Hausdorff $G$ and $X$\textemdash without proof on \cite[p.~875]{Nevo06}, we are not aware of a reference for an actual proof of this folklore result, nor for one of the more basic Proposition~\ref{prop:strong_continuity_basic}.

If $G$ and $X$ are not both locally compact, the proof of the strong continuity in Corollary~\ref{cor:strong_continuity_Lp_regular_measures} breaks down. However, there is an alternative context where a similar result can still be established along similar lines.

\begin{lemma}\label{lem:Cb_density}
Let $\ts$ be a metric space, and let $\mu$ be a Borel probability measure on $\ts$. Then $\Cb(\ts)$ is dense in $\Ell^p(X,\mu)$ for $1\leq p<\infty$.
\end{lemma}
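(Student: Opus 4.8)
The plan is to show that the closed linear subspace $\overline{\Cb(\ts)}$ of $\Ell^p(\ts,\mu)$ (note that, since $\mu$ is finite, every bounded function is $p$-integrable, so the $\mu$-classes of elements of $\Cb(\ts)$ do sit inside $\Ell^p(\ts,\mu)$) contains $[\ind_Y]_\mu$ for every Borel set $Y\subseteq\ts$. Because $\mu$ is finite, the $\mu$-classes of simple functions are dense in $\Ell^p(\ts,\mu)$, so this is enough: $\overline{\Cb(\ts)}$, being a linear subspace containing all $[\ind_Y]_\mu$, then contains all classes of simple functions and hence equals $\Ell^p(\ts,\mu)$.

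The first step is the case of a closed set. For closed $F\subseteq\ts$ and $n\in\N$, put $f_n(x):=\max\{0,\,1-n\,d(x,F)\}$, where $d(\,\cdot\,,F)$ denotes the distance to $F$. Each $f_n$ is continuous and takes values in $[0,1]$, so $f_n\in\Cb(\ts)$. Moreover $f_n\to\ind_F$ pointwise: $f_n(x)=1$ for $x\in F$, while for $x\notin F$ one has $d(x,F)>0$ (here closedness of $F$ enters), so $f_n(x)\to 0$. Since $|f_n|\leq 1$ and $\mu$ is finite, dominated convergence yields $\norm{f_n-\ind_F}_p\to 0$, whence $[\ind_F]_\mu\in\overline{\Cb(\ts)}$.

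The second step bootstraps from closed sets to all Borel sets by a Dynkin-system ($\pi$–$\lambda$) argument. Let $\mathcal D$ be the collection of Borel sets $Y\subseteq\ts$ with $[\ind_Y]_\mu\in\overline{\Cb(\ts)}$. Then $\ts\in\mathcal D$, as the constant function $1$ lies in $\Cb(\ts)$. If $A\subseteq B$ with $A,B\in\mathcal D$, then $\ind_{B\setminus A}=\ind_B-\ind_A$, so $B\setminus A\in\mathcal D$ because $\overline{\Cb(\ts)}$ is a linear subspace. If $A_n\in\mathcal D$ increase to $A$, then $\ind_{A_n}\to\ind_A$ pointwise with $|\ind_{A_n}|\leq 1$, so by dominated convergence $\norm{\ind_{A_n}-\ind_A}_p\to 0$, and $A\in\mathcal D$ since $\overline{\Cb(\ts)}$ is closed. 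Thus $\mathcal D$ is a $\lambda$-system. By the first step it contains all closed sets, which form a $\pi$-system (closed under finite intersections) generating the Borel $\sigma$-algebra; Dynkin's lemma then gives that $\mathcal D$ contains every Borel set, completing the argument together with the density of simple functions noted above.

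I do not expect a genuine obstacle: the lemma is routine metric-space measure theory. The only points that need care are the use of closedness of $F$ in the pointwise convergence $f_n\to\ind_F$ and the bookkeeping of the dominated-convergence hypotheses, both of which rest on $\mu$ being a finite (here, probability) measure, i.e.\ on $1\in\Ell^1(\ts,\mu)$. An equivalent route would invoke outer regularity of finite Borel measures on metric spaces together with Urysohn's lemma in place of the explicit functions $f_n$ and the $\pi$–$\lambda$ argument, but the present path is shorter and self-contained.
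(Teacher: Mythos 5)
Your proof is correct, but it takes a different route from the paper's. The paper first quotes outer regularity of Borel probability measures on metric spaces (\cite[Theorem~17.10]{Kechris}) to reduce the approximation of $\ind_Y$ for Borel $Y$ to that of $\ind_U$ for \emph{open} $U$, and then approximates $\ind_U$ by the increasing sequence $f_n(x)=\min(1,nd(x,U^c))$ in $\Cb(\ts)$. You instead start from \emph{closed} sets, approximated by $\max\{0,1-nd(x,F)\}$, and bootstrap to all Borel sets with a Dynkin $\pi$--$\lambda$ argument applied to the collection of Borel sets whose indicators lie in $\overline{\Cb(\ts)}$. The two explicit approximating families are essentially the same device; the real divergence is in the reduction step, where the paper leans on a cited regularity theorem while you substitute a self-contained (if slightly longer) piece of standard measure theory. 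Both arguments are complete; yours avoids the external citation at the cost of the $\lambda$-system bookkeeping, and the paper's is shorter given the reference. One cosmetic point in your version: for $F=\emptyset$ the expression $d(x,F)$ needs the convention $d(x,\emptyset)=+\infty$ (or simply observe that $\ind_\emptyset=0$ lies trivially in the closure); this is not a gap.
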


\begin{proof}
It is sufficient to approximate the characteristic function $\ind_Y$ of an arbitrary Borel subset $Y$ of $X$ by elements of $\Cb(X)$. Since we know from \cite[Theorem~17.10]{Kechris95} that, for every Borel subset $Y$ of $X$, $\mu(Y)=\inf\{\mu(U) : Y\subseteq U,\,U \textup{ open} \}$, it is already sufficient to approximate $\ind_U$ for an arbitrary open subset $U$ of $\ts$. We may assume that $U\neq X$. In that case, let $f_n(x)=\min(1,nd(x,U^c))$ ($n=1,2,\ldots$). Then $f_n\in \Cb(X)$ and $0\leq f_n\uparrow\ind_U$, so that ${\Vert f_n-\ind_U\Vert}_p\to 0$ as $n\to\infty$ by the dominated convergence theorem.
\end{proof}

\begin{proposition}\label{prop:strong_continuity_action_of_first_countable_group}
Let~$G$ be a first countable group, acting as Borel measurable transformations on a metric space $\ts$ with a $G$-invariant Borel probability measure~$\mu$. Suppose that, for all $x\in \ts$, the map $g\mapsto gx$ is continuous from~$G$ to~$\ts$. Then, for $1\leq p<\infty$, the canonical representation $\rep_{\mu}$ of~$G$ as isometric lattice automorphisms of $\Ell^p(X,\mu)$ is strongly continuous.
\end{proposition}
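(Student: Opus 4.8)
The plan is to reduce the strong continuity of $\rep_\mu$ on all of $\Ell^p(\ts,\mu)$ to strong continuity on a dense subspace, for which the natural candidate is $\Cb(\ts)$ by Lemma~\ref{lem:Cb_density}. Since $\rep_\mu$ acts by isometries (hence uniformly bounded operators), it suffices by a standard $3\epsilon$-argument to prove that $g\mapsto\rep_\mu(g)f$ is continuous at the identity $e$ for each $f\in\Cb(\ts)$, and in fact it suffices to prove \emph{sequential} continuity at $e$, since $G$ is first countable and so is the norm topology on $\Ell^p(\ts,\mu)$. So I would fix $f\in\Cb(\ts)$ and a sequence $g_n\to e$ in $G$, and aim to show $\norm{\rep_\mu(g_n)f-f}_p\to 0$.

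The key step is a pointwise-to-norm convergence argument via the dominated convergence theorem. For each fixed $x\in\ts$, the hypothesis that $g\mapsto gx$ is continuous from $G$ to $\ts$, combined with the continuity of $f$ on the metric space $\ts$, gives $f(g_n^{-1}x)\to f(x)$ as $n\to\infty$; note here that $g_n\to e$ in the first countable (hence, for the purposes of sequences, well-behaved) topological group $G$ implies $g_n^{-1}\to e$ as well, so $g_n^{-1}x\to x$ in $\ts$ and thus $(\rep_\mu(g_n)f)(x)=f(g_n^{-1}x)\to f(x)$ pointwise on $\ts$. Each function $x\mapsto|f(g_n^{-1}x)-f(x)|^p$ is dominated by the constant $(2\norm{f}_\infty)^p$, which is $\mu$-integrable because $\mu$ is a probability measure. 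The dominated convergence theorem then yields $\int_\ts|f(g_n^{-1}x)-f(x)|^p\,\ud\mu(x)\to 0$, i.e.\ $\rep_\mu(g_n)f\to f$ in $\Ell^p(\ts,\mu)$. Combining this with the density of $\Cb(\ts)$ and the fact that each $\rep_\mu(g)$ is an isometry finishes the proof.

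I expect the main (minor) obstacle to be the careful bookkeeping about which functions are genuinely measurable and about the reduction from nets to sequences: one must check that $x\mapsto f(g_n^{-1}x)$ is Borel measurable (it is, as the composition of the Borel measurable transformation $x\mapsto g_n^{-1}x$ with the continuous $f$, the former being Borel because $G$ acts by Borel measurable transformations), and one must be slightly careful that strong continuity at $e$ of a representation by isometries of a first countable group really does follow from sequential continuity at $e$ together with the $3\epsilon$-argument. Neither of these is deep, so the proof is essentially the dominated convergence computation above wrapped in a density and uniform-boundedness argument; the substantive input is simply that $\mu$ is \emph{finite}, which supplies the integrable dominating function that the non-locally-compact setting otherwise lacks.
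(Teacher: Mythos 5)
Your proposal is correct and follows essentially the same route as the paper's proof: density of $\Cb(\ts)$ via Lemma~\ref{lem:Cb_density}, reduction to sequential continuity using first countability of $G$, pointwise convergence from the continuity hypotheses, and the dominated convergence theorem with the constant bound supplied by the finiteness of $\mu$. The only cosmetic difference is that you reduce to continuity at $e$ while the paper checks sequential continuity at an arbitrary $g$ directly; both are immediate given that the operators are isometries.
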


\begin{proof}
In view of Lemma~\ref{lem:Cb_density} and  a $3\epsilon$-argument, it is sufficient to prove that the map $g\mapsto\rep_{\mu}(g)f$ is continuous for each $f\in \Cb(X)$. Since $G$ is first countable, continuity at a point $g\in G$ is the same as sequential continuity at $g$. If $g_n\to g$ as $n\to\infty$, then $g_n f\to gf$ pointwise as $n\to\infty$, by the continuity assumption on the $G$-action and the continuity of $f$. The dominated convergence theorem then yields that ${\Vert\rep_{\mu}(g_n)f-\rep_{\mu}(g)f \Vert}_p\to 0$ as $n\to\infty$.
\end{proof}

As a very special case, let us explicitly mention the strong continuity of the representation in Section~\ref{sec:disintegrating_space_actions}.

\begin{corollary}\label{cor:automatic_strong_continuity_Polish_pair}
Let $(G,\ts)$ be a Polish topological dynamical system, and suppose that $\mu$ is a $G$-invariant Borel probability measure on $\ts$. Then, for each $1\leq p<\infty$, the canonical representation $\rep_{\mu}$ of $G$  as isometric lattice automorphisms of $\Ell^p(X,\mu)$ is strongly continuous.
\end{corollary}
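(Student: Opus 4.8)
The plan is to obtain this corollary as an immediate application of Proposition~\ref{prop:strong_continuity_action_of_first_countable_group}: I would simply check that a Polish topological dynamical system, equipped with a $G$-invariant Borel probability measure, satisfies all of that proposition's hypotheses. Since $(G,\ts)$ is not assumed locally compact, Corollary~\ref{cor:strong_continuity_Lp_regular_measures} is not available, and the first-countable result is exactly the right tool.

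Concretely, I would argue as follows. Because $G$ is Polish it is metrizable, hence first countable, so the first hypothesis of Proposition~\ref{prop:strong_continuity_action_of_first_countable_group} holds. Because $\ts$ is Polish it is metrizable; fixing any compatible metric turns $\ts$ into a metric space carrying the $G$-invariant Borel probability measure~$\mu$, as required. By the definition of a topological dynamical system, $G$ acts on $\ts$ as homeomorphisms, and homeomorphisms are Borel measurable, so $G$ acts as Borel measurable transformations. Finally, the joint continuity of $(g,x)\mapsto gx$ from $G\times\ts$ to $\ts$ implies, by restriction to a single second coordinate, that $g\mapsto gx$ is continuous from $G$ to $\ts$ for every $x\in\ts$. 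One also notes that the canonical representation $\rep_{\mu}$ in the Polish setting is defined by the very same formula $\rep_{\mu}(g)[f]_\mu=[x\mapsto f(g^{-1}x)]_\mu$ used in Proposition~\ref{prop:strong_continuity_action_of_first_countable_group}, so the two representations coincide.

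With all hypotheses verified, Proposition~\ref{prop:strong_continuity_action_of_first_countable_group} gives directly that for each $1\leq p<\infty$ the canonical representation $\rep_{\mu}$ of $G$ as isometric lattice automorphisms of $\Ell^p(\ts,\mu)$ is strongly continuous, which is the assertion. I do not anticipate any genuine obstacle: the entire mathematical content is already contained in Proposition~\ref{prop:strong_continuity_action_of_first_countable_group}, and the only steps requiring a moment's thought are the standard facts that a Polish space is first countable and metrizable and that joint continuity of a group action entails separate continuity in each variable.
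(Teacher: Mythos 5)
Your proposal is correct and matches the paper's intent exactly: the corollary is stated immediately after Proposition~\ref{prop:strong_continuity_action_of_first_countable_group} precisely as the special case you describe, with the same hypothesis checks (Polish implies metrizable and first countable, homeomorphisms are Borel measurable, joint continuity gives separate continuity in $g$). No gaps.
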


\begin{remark} Every Borel probability measure on a Polish space is regular; this follows from \cite[Theorem~17.10]{Kechris95} and \cite[Vol. II, Theorem~7.1.7]{Bogachev07}. However, since local compactness of $G$ and $\ts$ are not assumed in Corollary~\ref{cor:automatic_strong_continuity_Polish_pair}, Corollary~\ref{cor:strong_continuity_Lp_regular_measures} is still not applicable here.
\end{remark}

\section{$\kstext$-direct integrals of Banach spaces and representations}\label{sec:direct_integrals}

This section provides the framework for the disintegration Theorems~ \ref{thm:disintegrating_space_actions} and~ \ref{thm:disintegrating_Markov_actions}.

We start by defining $\kstext$-direct integrals of Banach spaces and Banach lattices in the spirit of \cite[Sections 6.1 and 7.2]{HaLeRa91}. The idea in \cite{HaLeRa91} is, roughly, to begin with a `core' vector space $V$ that is supplied with a (suitable) family of norms ${\Vert\,\cdot\,\Vert}_\w$, depending on the points $\w$ of a measure space $(\Omega,\nu)$. If $\{B_\w\}_{\w\in\Omega}$ is the corresponding family of Banach space completions of $V$, then one can consider sections from $\Omega$ to $\bigsqcup_{\w\in\Omega} B_{\w}$. There is a natural notion of measurable sections, and the $B_\w$ are `glued together' by restricting attention to measurable sections and identifying measurable sections that are $\nu$-almost everywhere equal. For any K\"othe space $E$ associated with $(\Omega,\nu)$, one can then require, for a measurable section $s$, that the function $\w\mapsto\norm{s(\omega)}$ be in $E$. If $E$ satisfies appropriate additional properties, then the equivalence classes of such sections form a Banach space, which is called the $E$-direct integral of the family $\{B_\omega\}_{\w\in\Omega}$.

In Section \ref{subsec:direct_integrals_of_Banach_spaces}, this program is carried out for $E=\Ell^{p}(\Omega,\nu)$ ($1\leq p<\infty$), but with two noticeable modifications as compared to \cite{HaLeRa91}. The first is that the family of norms figuring in \cite[p.~61]{HaLeRa91} is replaced with a family of semi-norms. The need for this comes up quite naturally in our context, and it seems to the authors that this may also be the case elsewhere. The second difference is that the measure $\nu$ need not be complete. Completeness of measures is the standing assumption in \cite[p.~5]{HaLeRa91}, but the measure we will apply the formalism to in Section~\ref{sec:disintegrating_space_actions} need not be complete. One has to be extra cautious then, and particularly in a vector-valued context; the proof of Proposition~\ref{prop:direct_integral_is_complete} may serve as evidence for this. Since, in addition, the proofs\textemdash in a technically more convenient context\textemdash in \cite{HaLeRa91} of the type of results that we need are sometimes more indicated than given in full, we found it necessary to give the applicable details for our case, rather than refer the reader to \cite{HaLeRa91} with the task to make the pertinent modifications. As a consequence of this choice of presentation,  we are also able to give a precise discussion of the relation with the Bochner integral (see~Remark~\ref{rem:Bochner}) and with the usual theory of direct integrals of separable Hilbert spaces  and of decomposable operators (see~Section~\ref{Hilbert space direct integrals}), proving that these are particular cases of the general theory in this section.

In Section~\ref{subsec:direct_integrals_of_representations}, we define decomposable operators and the $\kstext$-direct integral of a decomposable family of representations of a group $G$, which is a representation of $G$ on the $\kstext$-direct integral of Banach spaces from Section~\ref{subsec:direct_integrals_of_Banach_spaces}. One way to obtain such a decomposable family of representations is when it originates from one common `core' representation $\widetilde\repint$ of $G$ on the `core' vector space $\vs$. Even though it is all fairly natural, we are not aware of previous similar work in the context of (dis)integrating representations.

As shown in Section~\ref{Hilbert space direct integrals}, the framework in Section~\ref{subsec:direct_integrals_of_representations} includes the usual theory of direct integrals of (representations on) separable Hilbert spaces.

Finally, in Section~\ref{subsec:perspective}, we sketch a perspective that a more or less obvious extension of the formalism could have in representation theory.


\subsection{$\boldsymbol{\Ell^p}$-direct integrals of Banach spaces}\label{subsec:direct_integrals_of_Banach_spaces}
We will now define $\kstext$-direct integrals of a suitable family of Banach spaces. These are Banach spaces that generalize the Bochner $\Ell^p$-spaces (see Remark~\ref{rem:Bochner}) and the direct integrals of separable Hilbert spaces (see Section~\ref{Hilbert space direct integrals}).

Let $(\Omega,\nu)$ be a measure space, and let $\vs$ be a vector space. For clarity, let us recall that our measures need not be finite (or even $\sigma$-finite) or complete. We say that a collection $\left\{{\norm{\,\cdot\,}}_{\w}\right\}_{\w\in\Omega}$ is a \emph{measurable family of semi-norms on $\vs$} if ${\norm{\,\cdot\,}}_{\w}$ is a semi-norm on $\vs$ for each $\w\in\Omega$, and $\omega\mapsto {\norm{x}}_{\w}$ is a measurable function on $\Omega$ for each $x\in \vs$. For later use, let us record that this is the same as requiring that the (identical) function $\omega\mapsto {\norm{[x]_{\w}}}_{\w}$ is a measurable function on $\Omega$ for all $x\in \vs$, where ${\norm{[x]_{\w}}}_{\w}$ is the value of the induced norm ${\norm{\,\cdot\,}}_{\w}$ on $\vs/\ker({\norm{\,\cdot\,}}_{\w})$ at the equivalence class $[x]_\w$ of $x$ in $\vs/\ker({\norm{\,\cdot\,}}_{\w})$.

Let $\left\{\bs_{\w}\right\}_{\w\in\Omega}$ be a collection of Banach spaces and suppose that $\left\{{\norm{\,\cdot\,}}_{\w}\right\}_{\w\in\Omega}$ is a measurable family of semi-norms on $V$ such that, for each $\w\in\Omega$, $\bs_{\w}$ is the Banach space completion of $\vs/\ker({\norm{\,\cdot\,}}_{\w})$ with respect to the induced norm ${\norm{\,\cdot\,}}_{\w}$ on $\vs/\ker({\norm{\,\cdot\,}}_{\w})$. Then we say that $\left\{\bs_{\w}\right\}_{\w\in\Omega}$ is a \emph{measurable family of Banach spaces over $(\Omega,\nu,V)$}. For conciseness, we usually do not explicitly mention the specific measurable family of semi-norms $\left\{{\norm{\,\cdot\,}}_{\w}\right\}_{\w\in\Omega}$ on $V$ that gives rise to $\left\{\bs_{\w}\right\}_{\w\in\Omega}$, as this family will generally be clear from the context.

Analogously, suppose that $V$ is a vector lattice and that $\left\{\norm{\,\cdot\,}_{\w}\right\}_{\w\in\Omega}$ is a measurable family of lattice semi-norms on $\vs$ such that, for each $\w\in\Omega$, $\bs_{\w}$ is the Banach lattice completion of $\vs/\ker({\norm{\,\cdot\,}}_{\w})$ with respect to the induced lattice norm ${\norm{\,\cdot\,}}_{\w}$ and the induced ordering on $\vs/\ker({\norm{\,\cdot\,}}_{\w})$. Then we say that a family $\left\{\bs_{\w}\right\}_{\w\in\Omega}$ of Banach lattices is a \emph{measurable family of Banach lattices over $(\Omega,\nu,V)$}. When using this terminology, we will tacitly assume that $V$ is a vector lattice, and that the ${\norm{\,\cdot\,}}_\w$ are lattice semi-norms.

Let $\left\{\bs_{\w}\right\}_{\w\in\Omega}$ be a measurable family of Banach spaces over $(\Omega,\nu,V)$. We say that a map $\sect:\Omega\to\bigsqcup_{\w\in\Omega}\bs_{\w}$ is a \emph{section of $\{\bs_{\w}\}_{\w\in\Omega}$} if $\sect(\w)\in\bs_{\w}$ for each $\w\in\Omega$. A \emph{simple section} is a section $\sect$ for which there exist $n\in\N$, $x_{1},\ldots, x_{n}\in \vs$, and measurable subsets $A_{1},\ldots, A_{n}$ of $\Omega$ such that $\sect(\w)=\left[\sum_{k=1}^{n}\ind_{A_{k}}(\w) x_{k}\right]_{\w}$ for all $\w\in\Omega$. Choosing the $A_k$ disjoint, we have ${\norm{\sect(\w)}}_{\w}=\sum_{k=1}^{n}\ind_{A_{k}}(\w){\norm {[x_{k}]_{\w}}}_{\w}$, so that the function $\w\mapsto{\norm{\sect(\w)}}_{\w}$ on $\Omega$ is measurable for each simple section $\sect$.

 A section $\sect$ of $\left\{\bs_{\w}\right\}_{\w\in\Omega}$ is said to be \emph{measurable} if there exists a sequence $(\sect_{k})_{k=1}^{\infty}$ of simple sections such that, for all $\w\in\Omega$,
 $\sect_k(\w)\to\sect(\w)$ in $\bs_{\w}$ as $k\to\infty$. Then clearly ${\norm{\sect_k(\w)}}_\omega\to{\norm{\sect(\w)}}_{\w}$ for all $\w\in\Omega$ as $k\to\infty$, and hence,  as a consequence of the measurability of the functions $\w\mapsto{\norm{\sect_k(\w)}}_{\w}$ on $\Omega$, the function $\w\mapsto{\norm{\sect(\w)}}_{\w}$ is a measurable function on $\Omega$ for each measurable section $\sect$. The measurable sections form a vector space, and we will denote the section that maps every $\w\in\Omega$ to the zero element of $B_{\w}$ simply by $0$. We also note that, if $A$ is a measurable subset of $ \Omega$ and $\sect$ is a simple section, then $\ind_A s$ (defined in the obvious pointwise way) is again a simple section. It follows easily from this that the measurable sections are a module over the measurable functions on $\Omega$ under pointwise operations.

We define the \emph{direct integral $\int_{\Omega}^{\oplus}\bs_{\w}\,\ud\nu(\w)$ of $\{\bs_{\w}\}_{\w\in\Omega}$ with respect to $\nu$} to be the space of all equivalence classes $[\sect]_{\nu}$ of measurable sections $\sect$ of $\{\bs_{\w}\}_{\w\in\Omega}$, where two measurable sections are equivalent if they agree $\nu$-almost everywhere on $\Omega$. We say that the $\bs_{\w}$ are the \emph{fibers} of $\int_{\Omega}^{\oplus}\bs_{\w}\,\ud\nu(\omega)$, and we introduce a vector space structure on $\int_{\Omega}^{\oplus}\bs_{\w}\,\ud\nu(\omega)$  in the usual representative-independent way.

If $\left\{\bs_{\w}\right\}_{\w\in\Omega}$ is a measurable family of Banach lattices over $(\Omega,\nu,V)$, then, in addition, we can meaningfully define a natural partial ordering on $\int_{\Omega}^{\oplus}\bs_{\w}\,\ud\nu(\omega)$ by
\begin{align*}
[\sect]_{\nu}\geq [t]_{\nu}\quad\Leftrightarrow \quad \sect(\w)\geq t(\w) \textrm{ for $\nu$-almost all $\w\in\Omega$}
\end{align*}
for $[\sect]_{\nu},[t]_{\nu}\in\int_{\Omega}^{\oplus}\bs_{\w}\,\ud\nu(\omega)$. Then $\int_{\Omega}^{\oplus}\bs_{\w}\,\ud\nu(\omega)$ is an ordered vector space. In fact, it is a vector lattice. For the latter statement, note that the pointwise supremum and infimum of two measurable sections are measurable again, as a consequence of the continuity of the lattice operations in each $\bs_{\w}$ and the fact that the pointwise supremum and infimum of two simple sections are simple sections again. It is then easily verified that, for $[\sect]_{\nu},\,[t]_{\nu}\in \int_{\Omega}^{\oplus}\bs_{\w}\,\ud\nu(\omega)$, $[\sect]_{\nu}\vee[t]_{\nu}$ exists in $\int_{\Omega}^{\oplus}\bs_{\w}\,\ud\nu(\omega)$, and that, in fact, $[\sect]_{\nu}\vee[t]_{\nu}=[\sect\vee t]_{\nu}$, where $[\sect\vee t]_{\nu}\in\int_{\Omega}^{\oplus}\bs_{\w}\,\ud\nu(\omega)$ is defined by $(\sect\vee t)(\omega):=\sect(\w)\vee t(\w)$ ($\w\in\Omega$). The expression for the infimum is similar.

For $p\in[1,\infty)$, we let the \emph{$\kstext$-direct integral $\left(\int_{\Omega}^{\oplus}\bs_{\w}\,\ud\nu(\w)\right)_{\ks}$ of $\{\bs_{\w}\}_{\w\in\Omega}$ with respect to $\nu$} be the subset of $\int_{\Omega}^{\oplus}\bs_{\w}\,\ud\nu(\w)$ consisting of those $[\sect]_{\nu}\in\int_{\Omega}^{\oplus}\bs_{\w}\,\ud\nu(\w)$ such that the function $\w\mapsto {\norm{\sect(\w)}}_{\w}$, which we know to be measurable, is in $\mathcal L^{p}(\Omega,\nu)$. This criterion is evidently independent of the particular representative $s$ of $[s]_{\nu}$, and we call $[\sect]_\nu$ and its representatives \emph{$p$-integrable \textup{(}with respect to $\nu$\textup{)}}. It follows from the triangle inequality for each ${\norm{\,\cdot\,}}_{\w}$ that $\left(\int_{\Omega}^{\oplus}\bs_{\w}\,\ud\nu(\w)\right)_{\ks}$ is a subspace of $\int_{\Omega}^{\oplus}\bs_{\w}\,\ud\nu(\w)$ and that
\begin{align}\label{eq:norm_definition}
{\norm{[\sect]_{\nu}}}_p:=\left(\int_\Omega {\left\Vert \sect(\w)\right\Vert}_\w^p\ud\nu\right)^{1/p}\quad([s]_\nu\in\left(\int_{\Omega}^{\oplus}\bs_{\w}\,\ud\nu(\w)\right)_{\ks})
\end{align}
defines a norm $[\sect]_{\nu}\mapsto{\norm{[\sect]_{\nu}}}_p$ on $\left(\int_{\Omega}^{\oplus}\bs_{\w}\,\ud\nu(\w)\right)_{\ks}$.

If $V$ is a vector lattice and $\left\{\bs_{\w}\right\}_{\w\in\Omega}$ is a measurable family of Banach lattices over $(\Omega,\nu,V)$, then it is easily verified that $\left(\int_{\Omega}^{\oplus}\bs_{\w}\,\ud\nu(\w)\right)_{\ks}$ is a vector sublattice of $\int_{\Omega}^{\oplus}\bs_{\w}(\w)\,\ud\nu$, and that \eqref{eq:norm_definition} supplies it with a lattice norm.

The $\kstext$-direct integrals of Banach spaces, as defined above, are, in fact, Banach spaces. To show this, we will (have to) use that the equivalence classes of the $p$-integrable simple sections are dense. This density, which is also a key ingredient of the proof of the disintegration Theorem~\ref{thm:disintegrating_space_actions}, is established in the following stronger result, based on a familiar truncation argument as in e.g.~\cite[proof of Proposition~2.16]{Ryan02}.

\begin{lemma}\label{lem:simple_sections_lie_dense}
Let $(\Omega,\nu)$ be a measure space, let $\vs$ be a vector space, and let $p\in[1,\infty)$. Let $\{\bs_{\w}\}_{\w\in\Omega}$ be a measurable family of Banach spaces over $(\Omega,\nu,\vs)$, and let $[\sect]_\nu\in \left(\int_{\Omega}^{\oplus}\bs_{\w}\,\ud\nu(\w)\right)_{\ks}$. Then, for each $\epsilon>0$, there exists a sequence $(\sect_k)_{k=1}^\infty$ of $p$-integrable simple sections such that ${\norm{\sect_k(\w)}}_{\w}\leq(1+\epsilon){\norm{\sect(\w)}}_\w$ for all $k\in\N$ and $\w\in\Omega$,  $\sect_k(\w)\to\sect(\w)$ in $\bs_{\w}$ as $k\to\infty$ for all $\w\in\Omega$, and ${\norm{[s]_{\nu}-[\sect_k]_{\nu}}}_{p}\to 0$ as $k\to\infty$.

If $\{\bs_{\w}\}_{\w\in\Omega}$ is a measurable family of Banach lattices over $(\Omega,\nu,\vs)$ and $[\sect]_\nu\geq 0$, then the sequence $(\sect_k)_{k=1}^\infty$ can be chosen such that, in addition, $[\sect_k]_{\nu}\geq 0$ for all $k\in\N$.
\end{lemma}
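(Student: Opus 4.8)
\emph{Proof strategy.} The plan is to run the familiar truncation argument (as in the Bochner $\Ell^p$-case) in this slightly more general setting, working throughout with the induced norms on the quotients $\vs/\ker(\norm{\,\cdot\,}_\w)$, so that the replacement of norms by semi-norms causes no difficulty. Since $[\sect]_\nu$ is the class of a measurable section, I would first fix a sequence $(t_k)_{k=1}^\infty$ of simple sections with $t_k(\w)\to\sect(\w)$ in $\bs_\w$ for every $\w\in\Omega$, and then set
\[
A_k:=\bigl\{\w\in\Omega : \norm{t_k(\w)}_\w\leq(1+\epsilon)\norm{\sect(\w)}_\w\bigr\},\qquad \sect_k:=\ind_{A_k}t_k .
\]
The set $A_k$ is measurable because both $\w\mapsto\norm{t_k(\w)}_\w$ and $\w\mapsto\norm{\sect(\w)}_\w$ are measurable functions on $\Omega$, and since the product of the characteristic function of a measurable set with a simple section is again a simple section, each $\sect_k$ is a simple section. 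By construction $\norm{\sect_k(\w)}_\w\leq(1+\epsilon)\norm{\sect(\w)}_\w$ for all $k$ and $\w$, and as the right-hand side lies in $\mathcal{L}^{p}(\Omega,\nu)$, each $\sect_k$ is $p$-integrable.

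Next I would verify the two pointwise statements. For the convergence $\sect_k(\w)\to\sect(\w)$, fix $\w$: if $\sect(\w)\neq 0$ then $\norm{t_k(\w)}_\w\to\norm{\sect(\w)}_\w<(1+\epsilon)\norm{\sect(\w)}_\w$, so $\w\in A_k$ for all large $k$ and hence $\sect_k(\w)=t_k(\w)\to\sect(\w)$; if $\sect(\w)=0$ then $\norm{\sect_k(\w)}_\w\leq(1+\epsilon)\cdot 0=0$, so $\sect_k(\w)=0=\sect(\w)$. For the convergence $\norm{[\sect]_\nu-[\sect_k]_\nu}_p\to 0$, I would note that $\norm{\sect(\w)-\sect_k(\w)}_\w\leq(2+\epsilon)\norm{\sect(\w)}_\w$ for all $k$ and $\w$, with the majorant in $\mathcal{L}^{p}(\Omega,\nu)$, and invoke the dominated convergence theorem. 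This settles the first assertion.

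For the lattice statement, assume $[\sect]_\nu\geq 0$ and fix a \emph{measurable} null set $M$ with $\sect(\w)\geq 0$ for all $\w\in\Omega\setminus M$. Because the $\norm{\,\cdot\,}_\w$ are lattice semi-norms, $\ker(\norm{\,\cdot\,}_\w)$ is an order ideal in $\vs$, so $\vs/\ker(\norm{\,\cdot\,}_\w)$ is a vector lattice whose canonical image in $\bs_\w$ is a sublattice; writing a simple section in terms of pairwise disjoint supporting sets, one sees that the pointwise positive part $\sect_k^{+}$, given by $\sect_k^{+}(\w):=\sect_k(\w)\vee 0$, is again a simple section, with underlying vectors the positive parts of those of $\sect_k$. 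I would then set $\sect_k^{\prime}:=\ind_{\Omega\setminus M}\sect_k^{+}+\ind_M\sect_k$, which is a simple section that is $\geq 0$ off the null set $M$, whence $[\sect_k^{\prime}]_\nu\geq 0$. On $\Omega\setminus M$ one has $\sect_k^{\prime}(\w)=\sect_k(\w)\vee 0\to\sect(\w)\vee 0=\sect(\w)$ by continuity of the lattice operations in $\bs_\w$, while on $M$ one has $\sect_k^{\prime}(\w)=\sect_k(\w)\to\sect(\w)$; moreover $\norm{\sect_k^{\prime}(\w)}_\w\leq\norm{\sect_k(\w)}_\w\leq(1+\epsilon)\norm{\sect(\w)}_\w$ for all $\w$, since $\abs{x\vee 0}\leq\abs{x}$ in a Banach lattice. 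Convergence in $\norm{\,\cdot\,}_p$ again follows from dominated convergence with the same majorant, so $(\sect_k^{\prime})_{k=1}^\infty$ has all the required properties.

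I expect the only genuinely delicate point to be this last paragraph: the hypothesis $[\sect]_\nu\geq 0$ only yields positivity of $\sect$ outside a $\nu$-null set, and, since $\nu$ need not be complete, the exceptional set must be enlarged to a measurable null set $M$; yet the three other conclusions are demanded for \emph{every} $\w\in\Omega$, so the approximants must also be controlled on $M$. Correcting $\sect_k^{+}$ back to $\sect_k$ on $M$ reconciles these demands without disturbing either the $\nu$-almost-everywhere positivity or the pointwise norm estimate. Everything else is the routine truncation-plus-dominated-convergence argument.
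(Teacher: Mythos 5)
Your proof is correct and follows essentially the same route as the paper's: the same truncation sets $A_k=\{\w:\norm{t_k(\w)}_\w\leq(1+\epsilon)\norm{\sect(\w)}_\w\}$, the same dominated-convergence step, and the same patching of the positive part with the original section on the measurable null set for the lattice statement. The extra details you supply (pointwise convergence case analysis, why the positive part of a simple section is simple, the norm estimate $\norm{x\vee 0}_\w\leq\norm{x}_\w$) are all correct and merely make explicit what the paper leaves to the reader.
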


\begin{proof}
Let $[\sect]_{\nu}\in\left(\int_{\Omega}^{\oplus}\bs_{\w}\,\ud\nu(\w)\right)_\ks$. Then there exists a sequence $(\sect_{k}^\prime)_{k=1}^{\infty}$ of simple sections such that, for all $\w\in\Omega$, $\sect_k^\prime(\w)\to\sect(\w)$ in $\bs_{\w}$ as $k\to\infty$. For $k\in\N$, let $A_{k}:=\left\{\w\in\Omega : {\norm{\sect_{k}^\prime(\w)}}_{\w}\leq (1+\epsilon){\norm{\sect(\w)}}_{\w}\right\}$. Then $A_k$ is a measurable subset of $\Omega$, hence the section $\sect_{k}$, defined by $\sect_{k}:=\ind_{A_k}\sect_{k}^\prime$, is simple again. Furthermore, ${\norm{\sect_{k}(\w)}}_{\w}\leq (1+\epsilon){\norm{\sect(\w)}}_{\w}$ for all $k$ and all $\w\in\Omega$, so that each $s_k$ is $p$-integrable with respect to $\nu$. For all $\w\in\Omega$, $\sect_{k}(\w)\to\sect(\w)$ in $\bs_{\w}$ as $k\to\infty$. It then follows from the dominated convergence theorem that  $[\sect_{k}]_{\nu}\to[\sect]_{\nu}$ in $\left(\int_{\Omega}^{\oplus}\bs_{\w}\,\ud\nu(\w)\right)_\ks$.

For the second statement, suppose that $[\sect]_\nu\geq 0$, and let $\epsilon>0$. Choose a sequence $(s_{k}')_{k=1}^{\infty}$ of simple sections with the three properties in the first part of the lemma. There exists a measurable subset $A$ of $\Omega$ such that $\nu(A)=0$ and $s(\w)\geq 0$ for all $\w\in A^c$. Then the sequence $(s_{k})_{k=1}^{\infty}$, given by $\sect_{k}(\w):=\ind_{A^c}(\w)\sect^\prime_k(\w)^+ + \ind_A(\w)\sect^\prime_k(\w)$ for $k\in\N$ and $\w\in\Omega$, is as desired.
\end{proof}

We can now establish the completeness of $\kstext$-direct integrals of Banach spaces.

\begin{proposition}\label{prop:direct_integral_is_complete}
Let $(\Omega,\nu)$ be a measure space, let $\vs$ be a vector space, and let $1\leq p<\infty$. If $\{\bs_{\w}\}_{\w\in\Omega}$ is a measurable family of Banach spaces over $(\Omega,\nu,\vs)$, then $\left(\int_{\Omega}^{\oplus}\bs_{\w}\,\ud\nu(\w)\right)_{\ks}$ is a Banach space. If $\{\bs_{\w}\}_{\w\in\Omega}$ is a measurable family of Banach lattices over $(\Omega,\nu,\vs)$, then $\left(\int_{\Omega}^{\oplus}\bs_{\w}\,\ud\nu(\w)\right)_{\ks}$ is a Banach lattice.
\end{proposition}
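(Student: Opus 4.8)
The plan is to deduce completeness from the standard fact that a normed space is complete as soon as it contains a dense \emph{linear} subspace in which every absolutely convergent series converges (the routine proof passes from a Cauchy sequence to a rapidly Cauchy subsequence and then to a telescoping series with terms in the subspace, where the subspace property is what makes the argument go through). As the dense subspace I would take $D:=\left\{[\sigma]_\nu : \sigma \text{ a } p\text{-integrable simple section of } \{\bs_\w\}_{\w\in\Omega}\right\}\subseteq \left(\int_{\Omega}^{\oplus}\bs_{\w}\,\ud\nu(\w)\right)_{\ks}$. This is a linear subspace, since a finite sum or scalar multiple of $p$-integrable simple sections is again one (use the triangle inequality for the ${\norm{\,\cdot\,}}_\w$ to see the result is $p$-integrable), and it is dense by the first part of Lemma~\ref{lem:simple_sections_lie_dense}. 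It therefore remains to show that an absolutely convergent series of classes of $p$-integrable simple sections converges in $\left(\int_{\Omega}^{\oplus}\bs_{\w}\,\ud\nu(\w)\right)_{\ks}$.

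So let $(\sigma_n)_{n=1}^\infty$ be $p$-integrable simple sections with $\sum_{n=1}^\infty{\norm{[\sigma_n]_\nu}}_p<\infty$, and put $g(\w):=\sum_{n=1}^\infty{\norm{\sigma_n(\w)}}_\w\in[0,\infty]$ for $\w\in\Omega$. As a pointwise supremum of the measurable functions $\w\mapsto\sum_{n=1}^N{\norm{\sigma_n(\w)}}_\w$, the function $g$ is measurable, and by the triangle inequality in $\mathcal L^{p}(\Omega,\nu)$ together with the monotone convergence theorem, $\left(\int_\Omega g^p\,\ud\nu\right)^{1/p}\leq\sum_{n=1}^\infty{\norm{[\sigma_n]_\nu}}_p<\infty$; in particular $g\in\mathcal L^{p}(\Omega,\nu)$, so the measurable set $\kfs:=\{\w\in\Omega : g(\w)<\infty\}$ satisfies $\nu(\Omega\setminus\kfs)=0$. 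For $\w\in\kfs$ the series $\sum_{n=1}^\infty\sigma_n(\w)$ is absolutely convergent in the Banach space $\bs_\w$, hence convergent, and I would define the section $\sect$ by $\sect(\w):=\sum_{n=1}^\infty\sigma_n(\w)$ for $\w\in\kfs$ and $\sect(\w):=0\in\bs_\w$ for $\w\in\Omega\setminus\kfs$.

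The delicate point, and the place where one must be careful because $\nu$ is not assumed complete, is that $\sect$ is a \emph{measurable} section; this is exactly where I would use that the $\sigma_n$ are simple rather than merely measurable. Indeed, the sections $\sect_N:=\ind_\kfs\sum_{n=1}^N\sigma_n$ are again simple sections, and $\sect_N(\w)\to\sect(\w)$ in $\bs_\w$ for \emph{every} $\w\in\Omega$ — on $\kfs$ by the definition of $\sect$, and on $\Omega\setminus\kfs$ trivially, both sides being $0$. Hence $\sect$ is, directly by the definition, a measurable section, and we never need an assertion of the form ``a pointwise limit of measurable sections is measurable'', which is precisely the kind of statement that is problematic for incomplete $\nu$. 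Since $\sect$ is measurable, $\w\mapsto{\norm{\sect(\w)}}_\w$ is measurable, and it is dominated by $g$ (on $\kfs$ by the triangle inequality applied to the convergent series, on $\Omega\setminus\kfs$ trivially), so it lies in $\mathcal L^{p}(\Omega,\nu)$ and $[\sect]_\nu\in\left(\int_{\Omega}^{\oplus}\bs_{\w}\,\ud\nu(\w)\right)_{\ks}$. Finally, for each $N$ one has $\sum_{n=1}^N[\sigma_n]_\nu=[\sect_N]_\nu$ (the two representatives differ only on the $\nu$-null set $\Omega\setminus\kfs$), so ${\norm{[\sect]_\nu-\sum_{n=1}^N[\sigma_n]_\nu}}_p^p=\int_\kfs{\norm{\sum_{n>N}\sigma_n(\w)}}_\w^p\,\ud\nu(\w)\leq\int_\kfs\bigl(\sum_{n>N}{\norm{\sigma_n(\w)}}_\w\bigr)^p\,\ud\nu(\w)$, and this last integral tends to $0$ as $N\to\infty$ by dominated convergence, with dominating function $g^p\in\mathcal L^{1}(\Omega,\nu)$. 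Thus $\sum_{n=1}^\infty[\sigma_n]_\nu$ converges, and the completeness criterion yields that $\left(\int_{\Omega}^{\oplus}\bs_{\w}\,\ud\nu(\w)\right)_{\ks}$ is a Banach space.

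For the Banach lattice statement there is then nothing left to do: in the lattice case it was already observed that $\left(\int_{\Omega}^{\oplus}\bs_{\w}\,\ud\nu(\w)\right)_{\ks}$ is a vector sublattice of $\int_{\Omega}^{\oplus}\bs_{\w}\,\ud\nu(\w)$ on which \eqref{eq:norm_definition} is a lattice norm, and a norm-complete normed vector lattice whose norm is a lattice norm is by definition a Banach lattice. The one real obstacle is the measurability of the limit section $\sect$ over a possibly incomplete $\nu$; the device that disposes of it is to pass first, via the density of the $p$-integrable simple sections, to series whose terms are simple, after which $\sect$ presents itself as a genuine everywhere-pointwise limit of simple sections and its measurability is immediate from the definition.
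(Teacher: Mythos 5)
Your proof is correct and follows essentially the same route as the paper: both invoke the criterion that a normed space is complete if every absolutely convergent series with terms from a dense subspace converges, use Lemma~\ref{lem:simple_sections_lie_dense} to reduce to series of $p$-integrable simple sections, and observe that the limit section is then an everywhere-pointwise limit of simple sections and hence measurable by definition, which is exactly how the paper circumvents the lack of a Pettis-type theorem for incomplete $\nu$. You merely write out in full the standard Riesz--Fischer estimates that the paper delegates to a reference.
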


\begin{proof}
Let $([s_k]_\nu)_{k=1}^\infty$ be a sequence in $\left(\int_{\Omega}^{\oplus}\bs_{\w}\,\ud\nu(\w)\right)_{\ks}$ such that $\sum_{k=1}^\infty{\norm{[s_k]_\nu}}_{p}<\infty$. Following the standard proof (see e.g.\ \cite[Theorem~6.6]{Folland99}), one shows that there exists a measurable subset $A$ of $\Omega$ such that $\nu(A^c)=0$ and $s(\w):=\lim_{n\to\infty}\sum_{k=1}^n \ind_A(\w)s_k(\w)$ exists for all $\w\in\Omega$. If one knew $s$ to be a measurable section, then the conclusion of the standard proof would show that the series  $\sum_{k=1}^\infty[s_k]_\nu$ converges to $s$. Now the pointwise limit of a sequence of scalar-valued measurable functions is measurable, and, more generally in the context of the Bochner integral, the limit of a sequence of strongly measurable functions is strongly measurable (a consequence of a version of the non-trivial Pettis measurability theorem; see e.g.\ \cite[Theorem~E.9]{Cohn93} for the latter). In our context, however, we have no such result. Fortunately, the following easily verified fact saves the day: If $X$ is a normed space and $Y$ is a dense subspace with the property that every absolutely convergent series with terms from $Y$ converges in $X$, then $X$ is a Banach space. With this and Lemma~\ref{lem:simple_sections_lie_dense} in mind, we see that it is sufficient to prove convergence of the series when the $s_k$ are \emph{simple} sections. In that case, $s$ is the pointwise limit of simple sections, hence is measurable by definition.
\end{proof}

\begin{remark}\label{rem:Bochner}\quad
\begin{enumerate}
\item
If $\vs$ is a Banach space with norm ${\norm{\,\cdot\,}}$, and if we take ${\norm{\,\cdot\,}}_\w=\norm{\,\cdot\,}$ for all $\w\in\Omega$, then all $B_\w$ equal $\vs$.  We claim that, in this case, $\left(\int_{\Omega}^{\oplus}\bs_{\w}\,\ud\nu(\w)\right)_\ks$  is the Bochner space $\Ell^p(\Omega,V,\nu)$ as it is defined for an arbitrary measure in \cite[Appendix~E]{Cohn93}. To see this non-trivial fact, note that a section $s$ is now a function $s:\Omega\to \vs$. It follows from \cite[Theorem~E.9]{Cohn93} (this is a version of the Pettis measurability theorem) and \cite[Proposition~E.2]{Cohn93}  that such a section is measurable in our terminology precisely if it is a strongly measurable function in the terminology of \cite[Appendix~E]{Cohn93}. Since the Bochner spaces in \cite[Appendix~E]{Cohn93} are defined (actually only for $p=1$, but this is immaterial), starting from the strongly measurable functions, in the same canonical fashion as $\left(\int_{\Omega}^{\oplus}\bs_{\w}\,\ud\nu\right)_\ks$ is defined, starting from the measurable sections, both spaces coincide.
\item Although it is usually not observed as such, the direct integrals of separable Hilbert spaces as they are defined in the literature are Bochner $\Ell^2$-spaces. This follows from part 1 of the current remark and Section~\ref{Hilbert space direct integrals}.
\end{enumerate}
\end{remark}

\subsection{Decomposable operators and $\boldsymbol{\ks}$-direct integrals of representations}\label{subsec:direct_integrals_of_representations}
We will now define decomposable operators, and, subsequently, a decomposable representation of a group on an $\kstext$-direct integral of a measurable family of  Banach spaces, that can (and will) be called the $\kstext$-direct integral of the fiberwise representations. Both are a natural generalization of the corresponding notion in the context of the usual direct integral of separable Hilbert spaces; see Section~\ref{Hilbert space direct integrals}.

Let $(\Omega,\nu)$ be a measure space, let $\vs$ be a vector space, and let $\{\bs_{\w}\}_{\w\in\Omega}$ be a measurable family of Banach spaces over $(\Omega,\nu,\vs)$, originating from the measurable family of semi-norms $\left\{{\norm{\,\cdot\,}}_{\w}\right\}_{\w\in\Omega}$ on $\vs$. A \emph{decomposable operator $T$ on $\{\bs_{\w}\}_{\w\in\Omega}$} is a map $\w\mapsto T_\w\in \La(\bs_\w)$ ($\w\in\Omega)$ such that, for each measurable section $s$, the section $Ts$, defined by $(Ts)(\w):=T_\w(s(\w))$, is measurable again, and such that the (possibly non-measurable) function $\w\to{\norm{T_\w}}_\w$ is $\nu$-essentially bounded. Then, for $1\leq p<\infty$, $T$ induces a bounded operator $T_{p}$ (also denoted by $\left(\int_\Omega^\oplus T_\w\,\ud\nu(\w)\right)_{\ks}$) on $\left(\int_{\Omega}^{\oplus}\bs_{\w}\,\ud\nu(\w)\right)_{\ks}$:  for $[s]_\nu\in\left(\int_{\Omega}^{\oplus}\bs_{\w}\,\ud\nu(\w)\right)_{\ks}$, we let $T_{p}[s]_\nu:=[Ts]_\nu$. If the $B_\w$ are Banach lattices and $\nu$-almost all $T_\w$ are positive operators, then $T_{p}$ is a positive operator. If $\nu$-almost all $T_\w$ are lattice homomorphisms, then $T_{p}$ is a lattice homomorphism.

Let $G$ be an abstract group. A \emph{decomposable representation $\repint$ of $G$ on $\{\bs_{\w}\}_{\w\in\Omega}$} is a family $\{\repint_\w\}_{\w\in\Omega}$, where $\repint_\w$ is a representation of $G$ on $\bs_\w$ ($\w\in\Omega$), such that, for each $g\in G$, the map $\w\to\repint_\w(g)$ is a decomposable operator on $\{\bs_{\w}\}_{\w\in\Omega}$; we denote this decomposable operator by $\repint(g)$. Then, for $1\leq p<\infty$, the map $\repint$ induces a representation $\repint_p$ of $G$ as bounded operators on $\left(\int_{\Omega}^{\oplus}\bs_{\w}\,\ud\nu(\w)\right)_{\ks}$, defined by $\repint_p(g)=(\repint(g))_p=\left(\int_\Omega^\oplus\repint_{\w}(g)\,\ud\nu(\w)\right)_{\ks}$ ($g\in G$). If the $B_\w$ are Banach lattices and $\nu$-almost all $\repint_\w$ are positive representations, then $\repint_p$ is a positive representation. We call $\repint_{p}$ the \emph{$\kstext$-direct integral of the representations $\{\repint_\w\}_{\w\in\Omega}$ with respect to $\nu$}, and we write $\repint_p=\left(\int_\Omega \repint_\w\,\ud\nu(\w)\right)_{\ks}$.

If $G$ is a topological group, it is easy to write down various conditions for the decomposable representation $\{\repint_\w\}_{\w\in\Omega}$ of $G$ on $\{\bs_{\w}\}_{\w\in\Omega}$ that are sufficient to ensure the strong continuity of $\repint_p$, together with that of all $\repint_\w$ ($\w\in\Omega$). A crude and $p$-independent one is e.g.\ that there exists a constant $M$ such that ${\norm{\repint_\w(g)}}_\w\leq\M$ for all $g\in G$ and $\w\in\Omega$, and that, for each $x\in\vs$ and $\epsilon>0$, there exists a neighbourhood $U_{x,\epsilon}$ of $e$ in $G$ such that ${\norm{\repint_\w(g)[x]_\w-[x]_\w}}_\w <\epsilon$ for all $g\in U_{x,\epsilon}$ and $\w\in\Omega$. Indeed, for each $\w\in\Omega$, this certainly implies that, for all $x\in\vs$, the map $g\mapsto\repint_\w(g)[x]_\w$ is continuous at $e$. By density, the uniform boundedness of the $\repint_\w(g)$ then implies that, for all $b\in\bs_\w$, the map
$g\to\repint_\w(g)b_\w$ is continuous at $e$; consequently, this is true at all points of $G$. Hence each $\repint_\w$ is strongly continuous. The condition also implies that, for each $p$-integrable simple section, the map $g\mapsto\repint_p(g)[s]$ is continuous at $e$. By the density statement in Lemma~\ref{lem:simple_sections_lie_dense}, the uniform boundedness of the $\repint_p(g)$ then implies that $\repint_p$ is strongly continuous.

There is a natural way to obtain a decomposable operator on $\{\bs_{\w}\}_{\w\in\Omega}$ (and, consequently, bounded operators on $\left(\int_{\Omega}^{\oplus}\bs_{\w}\,\ud\nu(\w)\right)_{\ks}$ for $1\leq p<\infty$) from one suitable linear map on the `core' space $V$, as follows. Suppose that $\widetilde T$ is a linear map on the abstract vector space $V$ with the property that there exist constants $M_{\w}$ ($\w\in\Omega$) and $M$ such ${\norm{\widetilde Tx}}_\w\leq M_{\w}{\norm{x}}_\w$ ($x\in\vs$, $\w\in\Omega$) and $M_{\w}\leq M$ for $\nu$-almost all $\w$. Then, for all $\w\in\Omega$, $\ker{\norm{\,\cdot\,}}_\w$ is $T$-invariant, hence there exists a linear map on $\vs / \ker{\norm{\,\cdot\,}}_{\w}$, denoted by $T_\w$, and given by $T_\w[x]_\w=[\widetilde T x]_\w$ ($x\in \vs$). Then ${\norm{T_\w [x]_\w}}_\w\leq M_{\w}{\norm{[x]_w}}_\w$ for all $[x]_\w\in \vs/\ker{\norm{\,\cdot\,}}_\w$. This operator extends to a bounded operator on $\bs_\w$, still denoted by $T_\w$, and then ${\norm{T_\w}}_\w\leq M$ for $\nu$-almost all $\w$.
The point is that the family $\{T_\w\}$ ($\w\in\Omega)$ automatically leaves the space $\int_{\Omega}^{\oplus}\bs_{\w}\,\ud\nu(\w)$ of measurable section invariant, so that it defines a decomposable operator $T$ on $\{\bs_{\w}\}_{\w\in\Omega}$. To see this, we first note that, if $\sect\in \int_{\Omega}^{\oplus}\bs_{\w}\,\ud\nu(\w)$ is a simple section, say $\sect(\w)=[\sum_{k=1}^{n}\ind_{A_{k}}(\w) x_{k}]_{\w}$ ($\w\in\Omega$) for some $n\in\N$, $x_{1},\ldots, x_{n}\in \vs$, and measurable subsets $A_{1},\ldots, A_{n}$ of $\Omega$, then $(Ts)(\w)=T_\w[\sum_{k=1}^{n}\ind_{A_{k}}(\w) x_{k}]_{\w}=[\sum_{k=1}^{n}\ind_{A_{k}}(\w)\widetilde T x_{k}]_{\w}$. Hence $T$ is a simple section again if $s$ is. If $s$ is a measurable section, say $s(\w)=\lim_{n\to\infty}s_n(\w)$ ($\w\in\Omega$) for simple sections $s_n$, then, as a consequence of the continuity of the $T_\w$ on $B_\w$, we see that $(Ts)(\omega)=T_\w(s(\w))=\lim_{n\to\infty}T_\w(s_n(\w))=\lim_{n\to\infty}(Ts_n)(\omega)$ ($\w\in\Omega$). Hence $Ts$ is a measurable section again if $s$ is, as desired, and the family $\{T_\w\}_{\w\in\Omega}$ is indeed a decomposable operator. We conclude that, for $1\leq p<\infty$, this `core' linear map $\widetilde T$ gives rise to a bounded operator $T_p$ on $\int_{\Omega}^{\oplus}\bs_{\w}\,\ud\nu(\w)$ such that $\norm{T_p}\leq M$.

If the $B_\w$ are Banach lattices, and $\widetilde T$ is a positive operator on $V$, then all $T_\w$ and $T_p$ are positive operators. If $\widetilde T$ is a lattice homomorphism, then all $T_\w$ and $T_p$ are lattice homomorphisms.

Consequently, there is also a natural way to obtain a decomposable representation of a group $G$ from one `core' representation $\widetilde\repint$ of $G$ on $V$. We say that $\widetilde \repint$ is \emph{pointwise essentially bounded} if, for all $g\in G$, there exist constants $M_{\w,g}$ ($\w\in\Omega$) and $M_g$ such that ${\norm{\widetilde\repint(g)x}}_\w\leq M_{\w,g}{\norm{x}}_\w$ for all $x\in\vs$ and $\w\in\Omega$, and $M_{\w,g}\leq M_g$ for $\nu$-almost all $\w$. It is immediate from the above, applied to each $\widetilde\repint(g)$ ($g\in G$), that there is a family $\{\repint_\w\}_{\w\in\Omega}$ of representations of $G$ as bounded operators on the spaces $B_\w$ that constitutes a decomposable representation $\repint$ of $G$; these are determined by $\rep_\w(g)[x]_\w=[\widetilde\repint(g)x]_\w$ ($g\in G, x\in \vs, \w\in\Omega$). Therefore, for $1\leq p<\infty$, the $\kstext$-direct integral $\repint_p=\left(\int_\Omega^\oplus\repint_\w\,\ud\nu(\w)\right)_{\ks}$ of the representations  $\{\repint_\w\}_{\w\in\Omega}$ can also be defined, and it lets $G$ act as bounded operators on $\left(\int_\Omega \bs_\w\,\ud\nu(\w)\right)_{\ks}$.
If the $B_\w$ are Banach lattices, and $\widetilde\repint$ is a positive representation of $G$ on $V$, then all $\repint_\w$ are positive representations, and hence so is $\repint_p$ ($1\leq p<\infty$).

As will become clear in Section~\ref{sec:disintegrating_space_actions}, the $\kstext$-direct integrals of positive representations that are the main concern of this paper are of the latter form. They originate from one `core' canonical positive representation of a group on one `core' vector space of simple functions on a measurable space, with $M_{g,\w}=1$ for all $g\in G$ and $\w\in\Omega$.

If, still in this context of a `core' representation, one requires crudely that there exists a constant~$M$ such that ${\norm{\widetilde\repint(g)x}}_\w\leq M{\norm{x}}_\w$ for all $g\in G$, $x\in\vs$, and $\w\in\Omega$, and that, for each $x\in\vs$ and $\epsilon>0$, there exists a neighbourhood $U_{x,\epsilon}$ of~$e$ in~$G$ such that ${\norm{\widetilde\repint (g)x-x}}_\w <\epsilon$ for all $g\in U_{x,\epsilon}$ and $\w\in\Omega$, then the family of representations $\{\repint_\w\}_{\w\in\Omega}$ satisfies the conditions as mentioned above. Therefore, in that case all representations $\repint_\w$ ($\w\in\Omega$) are strongly continuous, and so is their $\kstext$-direct integral~$\repint_p$ for $1\leq p<\infty$.

\subsection{Direct integrals of separable Hilbert spaces}\label{Hilbert space direct integrals}
In the spirit of the constant fibers in the first part of Remark~\ref{rem:Bochner}, we let $\vs$ be a (possibly complex) separable Hilbert space with norm $\norm{\,\cdot\,}$, and we take ${\norm{\,\cdot\,}}_\w=\norm{\,\cdot\,}$ for all $\w\in\Omega$. We have already seen in Remark~\ref{rem:Bochner} (this is also true for non-separable~$V$) that $\left(\int_\Omega^\oplus\bs_\w\,\ud\nu\right)_\two$ can be identified with the Bochner space $\Ell^2(\Omega,V,\nu)$. If $V$ is separable, then our $\twotext$-direct integral is also the usual Hilbert space direct integral of copies of $V$ over $\Omega$ as defined in e.g.\ \cite[p.~15--16]{Nielsen80}, and our notion of decomposable operators also coincides with the usual one as in \cite[p.~18]{Nielsen80}.

To see this, we first note that \cite[Theorem~E.9]{Cohn93} and \cite[Proposition~E.2]{Cohn93} imply that our measurable sections are precisely the Borel measurable $V$-valued functions on $\Omega$, as a consequence of the separability of $V$. Consequently, our space $\left(\int_\Omega^\oplus\bs_\w\,\ud\nu\right)_\two$\textemdash that can be supplied with an inner product in the obvious way\textemdash of (equivalence classes of) square integrable measurable sections coincides with the space of (equivalence classes of) square integrable Borel measurable $V$-valued functions, i.e.\ with the Hilbert space direct integral of copies of $V$ as in \cite[p.~15-16]{Nielsen80}.

The decomposable operators $T$ on this common space, as considered in \cite[p.~18]{Nielsen80}, are a family of bounded operators $\{T_\w\}_{\w\in\Omega}$ such that the map $\w\mapsto{\norm{T_\w}}$ is $\nu$-essentially bounded and such that, for all $x,y\in\vs$, the function $\w\mapsto(T_\w x,y)$ is Borel measurable. This notion is the same as ours. To see this, let $T$ be a decomposable operator in our sense. Then, for each $x\in V$, the image of the measurable section $\ind_\Omega x$ is a measurable section again, i.e.\ the map $\w\mapsto T_\w x$ is a measurable section for all $x\in \vs$. As already noted, this implies (and is in fact equivalent to) the Borel measurability of this $V$-valued function. Certainly the function $\w\mapsto(T_\w x,y)$ is then Borel measurable for all $y\in\vs$, i.e. the operator $T$ is decomposable in the sense of \cite[p.~18]{Nielsen80}. Conversely, suppose that $T$ is a decomposable operator in the sense of \cite[p.~18]{Nielsen80}. Then, for all $x,y\in \vs$, the function $\w\mapsto (T_\w x,y)$ is Borel measurable for all $y\in \vs$. As is easily seen, the map $\w\mapsto (T_\w s(\w),y)$ is then also Borel measurable for all simple sections $s$ and all $y\in\vs$. By the continuity of the $T_\w$, the function $\w\mapsto (T_\w s(\w),y)$ is then in fact Borel measurable for all measurable sections $s$ and all $y\in V$. By \cite[Theorems~E.9 and~E.2]{Cohn93}, this implies that the map $\w\mapsto T_w s(\w)$ is a measurable section in our sense for all measurable sections $s$. Hence $T$ is a decomposable operator in our sense.

We conclude that the theory of $\twotext$-direct integrals and their decomposable operators includes the usual one of direct integrals of copies of a separable Hilbert space and their decomposable operators. In the Hilbert space context, the next step is to piece together such direct integrals for the dimensions $1,2,\ldots,\infty$. Since this is also possible for the $\twotext$-direct integrals (see Section~\ref{subsec:perspective}), the classical theory of direct integrals of separable Hilbert spaces and their decomposable operators is included in that for the general Banach space case. 

\subsection{Perspectives in representation theory}\label{subsec:perspective}
Although we do not need this ourselves, we note that a natural further generalization of the material in Sections~\ref{subsec:direct_integrals_of_Banach_spaces} and~ \ref{subsec:direct_integrals_of_representations} is possible. First, as in \cite{HaLeRa91}, one can consider more general K\"othe spaces than $\Ell^p$-spaces, provided that the proofs of Lemma~ \ref{lem:simple_sections_lie_dense} and Proposition~\ref{prop:direct_integral_is_complete} still work, or that alternate proofs of completeness can be given that also control the measurability issue. Second, as in \cite[p.~61]{HaLeRa91}, one can work with a decomposition $\Omega=\bigsqcup_{\alpha\in A}\Omega_\alpha$ of the measure space into measurable parts. At a modest price of some extra remarks and notation, one can let the  `core' data $(\vs_\alpha,\{\w_\alpha\}_{\w_\alpha\in\Omega_\alpha})$ of a vector space $\vs_\alpha$ and a measurable family of semi-norms on $\vs_\alpha$ depend on the part $\Omega_\alpha$. If $G$ is a group, one can work with triples $(\vs_\alpha,\{\w_\alpha\}_{\w_\alpha\in\Omega_\alpha},\repint_\alpha)$, where $\repint_\alpha$ is a decomposable representation of $G$, consisting of a family of representations $\{\repint_{\w_\alpha}\}_{\w_\alpha\in\Omega_\alpha}$ of $G$ on the corresponding members of the associated family of Banach spaces $\{\bs_{\w_\alpha}\}_{\w_\alpha\in\Omega_\alpha}$, satisfying the appropriate boundedness condition. Depending on $\alpha$, this $\repint_\alpha$ may or may not originate from a common `core' representation of $G$ on $V_\alpha$. If, for each $g\in G$, there exists a constant $M_g$ such that ${\norm{\repint_{\w_\alpha} (g)x_{\w_\alpha}}}_{\w_\alpha}\leq M_g{\norm{x_{\w_\alpha}}}_{\w_\alpha}$ for all $\alpha\in A$, $\w_\alpha\in\Omega_\alpha$, and $x_{\w_\alpha}\in V_\alpha$ (this can obviously be relaxed), then the $\repint_\alpha$ yield a representation of $G$ as bounded operators on the entire direct integral of Banach spaces over $\Omega$. This representation can be viewed as the fiberwise representations $\repint_{\w_\alpha}$ ($\alpha\in A,\w_\alpha\in\Omega$) having been `glued together' via the requirement of measurability in the constructions.

 Thus the formalism provides a flexible way to construct a Banach space representation of a group (or, with obvious modifications, of an algebra) that is a direct integral of fiberwise representations on possibly different spaces. Coming from the other direction, one can ask whether a given representation of a group or algebra on a Banach space is of this form, where the fibers are to satisfy an additional condition, or are to satisfy such a condition almost everywhere. Topological irreducibility or algebraic irreducibility are natural conditions for general Banach spaces. For Banach lattices and positive representations, order indecomposability\textemdash as in this paper\textemdash is likewise natural. Theorems~\ref{thm:disintegrating_space_actions} and~\ref{thm:disintegrating_Markov_actions} shows that in certain situations a decomposition of the latter type is possible, where a one-part $\Omega$ and a decomposable representation on this single part that comes from one `core representation' on the pertinent single `core' space $\vs$ already suffice.

\section{Disintegration: action on underlying space}
\label{sec:disintegrating_space_actions}

In this section, the principal aim is Theorem~\ref{thm:disintegrating_space_actions} in Section~\ref{subsec:disintegrating_the_representation}, giving a disintegration of canonical representations as isometric lattice automorphisms on $\Ell^p$-spaces into order indecomposables. The main tool for this is the factorization Theorem~\ref{thm:factorization} for the integral over the space, as established in Section~\ref{subsec:disintegrating_the_measure}. We conclude with a worked example in Section~\ref{subsec:worked_example}.

If $\ts$ is a metric space, then we let $\PR$ be the set of Borel probability measures on $\ts$. If the group $G$ acts as Borel measurable transformations on $\ts$, then $\INV$ is the set of all $G$-invariant Borel probability measures on $\ts$, and $\ERG$ is the set of all ergodic Borel probability measures on $\ts$. Hence $\ERG\subseteq\INV\subseteq\PR$. We suppress the space and the group in the notation, as these will be clear from the context.

We recall that the canonical map from the set of Borel probability measures on a metric space $\ts$ into $\Cb(\ts)^*$, the norm dual of the bounded continuous functions on $\ts$, is injective; this follows from part of the argument to prove that (ii) implies (iv) in \cite[Theorem~17.20]{Kechris95}, combined with \cite[Theorem~17.10]{Kechris95}. We may thus view $\PR$, $\INV$, and $\ERG$ as subsets of $(\Ce_{\text{b}}(\ts))^{*}$, and we supply these sets with the induced weak$^{*}$-topologies and the ensuing Borel $\sigma$-algebras.

\subsection{Disintegrating the measure}\label{subsec:disintegrating_the_measure}

The factorization Theorem~\ref{thm:factorization} is based on a disintegration theorem for the elements of $\INV$. In order to formulate the latter, and also for future use, we start with a preliminary measurability result.

\begin{lemma}\label{lem:measurability}
Let $\ts$ be a separable metric space, and let $f:\ts\to [0,\infty]$ be a Borel measurable extended function. Then the map $\PR\rightarrow [0,\infty]$, defined by $\lambda\mapsto \int_{\ts}f(x)\,\ud\lambda(x)$, is Borel measurable.
\end{lemma}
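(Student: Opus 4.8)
The plan is to prove measurability of $\lambda \mapsto \int_\ts f \, \ud\lambda$ by the standard measure-theoretic bootstrapping: first establish it for a nice generating class of functions $f$, then use a monotone class / Dynkin-type argument (or monotone convergence) to extend to all nonnegative Borel $f$.

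First I would treat the case $f \in \Cb(\ts)$. By the very definition of the topology on $\PR$, which is the weak$^*$-topology induced from $(\Cb(\ts))^*$, the map $\lambda \mapsto \int_\ts f \, \ud\lambda = \langle \lambda, f\rangle$ is weak$^*$-continuous, hence Borel measurable. Next I would pass to indicator functions of open sets. Given an open set $U \subseteq \ts$ with $U \neq \ts$, the functions $f_n(x) := \min(1, n\, d(x, U^c))$ lie in $\Cb(\ts)$ and satisfy $0 \leq f_n \uparrow \ind_U$ pointwise (this is exactly the construction used in the proof of Lemma~\ref{lem:Cb_density}, and it uses that $\ts$ is metric). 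By the monotone convergence theorem, $\int_\ts \ind_U \, \ud\lambda = \lim_{n\to\infty} \int_\ts f_n \, \ud\lambda$ for each $\lambda$, so $\lambda \mapsto \lambda(U) = \int_\ts \ind_U \, \ud\lambda$ is a pointwise limit of Borel measurable functions, hence Borel measurable. The case $U = \ts$ is trivial since $\lambda(\ts) = 1$ for all $\lambda \in \PR$.

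Then I would upgrade from open sets to all Borel sets. Let $\mathcal D$ be the collection of all Borel subsets $Y \subseteq \ts$ for which $\lambda \mapsto \lambda(Y)$ is Borel measurable on $\PR$. We have just shown $\mathcal D$ contains all open sets; the open sets form a $\pi$-system (closed under finite intersection) generating the Borel $\sigma$-algebra of $\ts$. Moreover $\mathcal D$ is a $\lambda$-system (Dynkin system): it contains $\ts$; it is closed under proper differences because $\lambda(Y_2 \setminus Y_1) = \lambda(Y_2) - \lambda(Y_1)$ when $Y_1 \subseteq Y_2$ (finite measures, so no $\infty - \infty$); and it is closed under increasing countable unions because $\lambda(\bigcup_n Y_n) = \lim_n \lambda(Y_n)$, a pointwise limit of Borel functions. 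By Dynkin's $\pi$-$\lambda$ theorem, $\mathcal D$ contains the full Borel $\sigma$-algebra of $\ts$, so $\lambda \mapsto \int_\ts \ind_Y \, \ud\lambda$ is Borel measurable for every Borel set $Y$. By linearity, the same holds for every nonnegative Borel simple function. Finally, for a general Borel measurable $f : \ts \to [0,\infty]$, choose nonnegative Borel simple functions $\varphi_n$ with $\varphi_n \uparrow f$ pointwise; then $\int_\ts f \, \ud\lambda = \lim_{n\to\infty} \int_\ts \varphi_n \, \ud\lambda$ by monotone convergence, exhibiting $\lambda \mapsto \int_\ts f \, \ud\lambda$ as a pointwise limit of Borel measurable $[0,\infty]$-valued functions, hence Borel measurable. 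This completes the proof.

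The only mild subtlety — the step I would flag as requiring a moment's care rather than being a genuine obstacle — is the passage from $\Cb(\ts)$ to $\ind_U$: it is exactly here that separability and metrizability of $\ts$ enter (via the Urysohn-type functions $\min(1, n\,d(x,U^c))$ approximating $\ind_U$ from below, which is available in a metric space), and one should note that metrizability alone suffices for this particular step, with separability being harmless. Everything else is the routine $\pi$-$\lambda$/monotone convergence machinery, where the finiteness of the measures (they are probability measures) is what keeps all the set-algebra identities valid.
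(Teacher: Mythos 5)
Your proof is correct, and it differs from the paper's in one essential respect: how the base case $f=\ind_Y$ for a Borel set $Y$ is handled. The paper disposes of this in one line by citing Varadarajan's Lemma~2.3, which identifies the Borel $\sigma$-algebra of $\PR$ (for the weak$^*$-topology induced by $\Cb(\ts)^*$) with the $\sigma$-algebra generated by the evaluation maps $\lambda\mapsto\lambda(Y)$; in particular each such evaluation map is Borel measurable, and the rest of the paper's proof (linearity for simple functions, then monotone convergence) coincides with your final two steps. You instead prove the base case from scratch: weak$^*$-continuity of $\lambda\mapsto\int_\ts f\,\ud\lambda$ for $f\in\Cb(\ts)$, upward approximation $\min(1,n\,d(\cdot,U^c))\uparrow\ind_U$ to handle open sets, and a $\pi$-$\lambda$ argument to reach all Borel sets. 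This only uses the easy direction of Varadarajan's lemma and establishes it en route, so your argument is self-contained where the paper's is not; the price is length, and the gain is that the proof is elementary and makes visible exactly where the hypotheses enter. Your closing observation is also accurate: metrizability is what the Urysohn-type approximation needs, the finiteness of the measures is what keeps the Dynkin-system identities valid, and separability of $\ts$ plays no role in this particular lemma (it is needed elsewhere in the paper, not here). All steps check out; there is no gap.
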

\begin{proof}
We know from \cite[Lemma 2.3]{Varadarajan63} that the Borel $\sigma$-algebra of $\PR$ is also the smallest $\sigma$-algebra of subsets of $\PR$ such that, for all Borel subsets $Y$ of $\ts$, the map $\PR\to[0,1]$, defined by $\lambda\mapsto\lambda(Y)$, is measurable.
Thus the statement holds if $f=\ind_{Y}$ for a measurable subset $Y$ of $\ts$. By linearity it also holds for simple functions, and, using the monotone convergence theorem, it is then seen to be valid for general Borel measurable $f:\ts\to[0,\infty]$.
\end{proof}

We will now summarize what we need from the work of Farrell~\cite{Farrell62} and Varadarajan~\cite{Varadarajan63}, as it can be found in \cite[Theorem 27.5.7]{Zakrzewski02}.
Applying Lemma~\ref{lem:measurability} to $f=\ind_{Y}$ for a Borel subset $Y$ of $\ts$, we see that the integrand in part~\ref{thm:measure_disintegration_3}  of the following result is Borel measurable.

\begin{theorem}\label{thm:measure_disintegration}
Let $(G,\ts)$ be a Polish topological dynamical system, where $G$ is locally compact. Suppose that $\INV\neq\emptyset$. Then $\ERG\neq \emptyset$, and there exists a Borel measurable map $\beta:\ts\rightarrow \ERG$, $x\mapsto \beta_{x}$, with the following properties:
\begin{enumerate}
\item\label{thm:measure_disintegration_1} $\beta_{gx}=\beta_{x}$ for all $x\in X$ and $g\in G$;
\item\label{thm:measure_disintegration_2} $\lambda(\beta^{-1}(\{\lambda\}))=1$ for all $\lambda\in \ERG$;
\item\label{thm:measure_disintegration_3} For all $\mu\in \INV$ and all Borel subsets $Y$ of $\ts$,
\begin{align*}
\mu(Y)=\int_{\ts}\beta_{x}(Y)\,\ud\mu(x).
\end{align*}
\end{enumerate}
\end{theorem}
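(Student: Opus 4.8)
The statement packages the ergodic decomposition theorem, due in this generality to Farrell~\cite{Farrell} and Varadarajan~\cite{Varadarajan} (see \cite[Theorem~27.5.7]{Zakrzewski}), so the plan is to indicate how the non-emptiness of $\ERG$ and the three properties of $\beta$ arise from the standard ingredients. The first step I would take is to reduce the group to a countable one. Since $G$ is Polish it has a countable dense subgroup $\Gamma$, and, because the Koopman representations are strongly continuous (Corollary~\ref{cor:automatic_strong_continuity_Polish_pair}, applied with $p=2$), the map $g\mapsto\mu(gY\mathbin{\Delta} Y)$ is continuous on $G$ for every $\mu\in\PR$ and every Borel set $Y\subseteq\ts$. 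Hence $\INV$ and $\ERG$ are unchanged if invariance, respectively ergodicity, is only demanded for $\Gamma$, and a Borel set is $\mu$-essentially $G$-invariant precisely when it is $\mu$-essentially $\Gamma$-invariant. From here on one works with the countable group $\Gamma$ acting by Borel isomorphisms of the standard Borel space $\ts$.

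Next I would fix a single $\mu\in\INV$ and disintegrate it over the $\sigma$-algebra $\INV_\mu$ of $\mu$-essentially $\Gamma$-invariant Borel sets. Since $(\ts,\mu)$ is a standard probability space, there is a Borel map $x\mapsto\mu_x$ into $\PR$, measurable with respect to $\INV_\mu$, such that $\mu=\int_\ts\mu_x\,\ud\mu(x)$ and $\mu_x(A)=E_\mu(\ind_A\mid\INV_\mu)(x)$ for every Borel set $A$. Using a countable $\Gamma$-invariant algebra generating the Borel $\sigma$-algebra of $\ts$ one checks that $\mu_x$ is $\Gamma$-invariant and ergodic for $\mu$-almost every $x$; indeed, if $\mu_x$ failed to be ergodic on a set of positive $\mu$-measure, a measurable selection of a non-trivial $\mu_x$-essentially invariant set would produce a $\mu$-essentially invariant Borel set strictly finer than $\INV_\mu$ allows, a contradiction. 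In particular this already shows $\ERG\neq\emptyset$ (and it is also standard that $\mu$ is ergodic if and only if it is an extreme point of the convex set $\INV$).

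The crux is to upgrade the family $\{x\mapsto\mu_x\}_{\mu\in\INV}$ of per-measure disintegrations to a \emph{single} Borel map $\beta\colon\ts\to\ERG$ with $\beta_x=\mu_x$ for $\mu$-almost every $x$, simultaneously for all $\mu\in\INV$. The plan is to do this coordinatewise: for each set $A$ in the fixed countable generating algebra, realise $x\mapsto\mu_x(A)$ as the pointwise limit of a fixed, $\mu$-independent sequence of Borel functions built by successive conditioning — this is the device that replaces the pointwise ergodic theorem, which is not available for a general (non-amenable) $G$ — then define $\beta_x$ to be a fixed ergodic measure $\lambda_0$ on the Borel set where these limits do not assemble into a genuine ergodic measure, and finally to set $\beta_x:=\lambda_0$ off the $G$-invariant Borel set on which $x\mapsto\beta_x$ is actually $G$-invariant. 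With such a $\beta$ in hand, property~\ref{thm:measure_disintegration_1} holds by construction; property~\ref{thm:measure_disintegration_3} is the disintegration identity $\mu=\int_\ts\beta_x\,\ud\mu(x)$ evaluated on $\ind_Y$, which holds for every $\mu\in\INV$ because $\beta$ is consistent with the individual disintegrations; and property~\ref{thm:measure_disintegration_2} follows because for ergodic $\lambda$ the $\sigma$-algebra $\INV_\lambda$ is $\lambda$-trivial, so $\beta_x=\lambda$ for $\lambda$-almost every $x$.

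The main obstacle, as indicated, is precisely this passage from the per-measure disintegrations to one globally Borel, everywhere $G$-invariant map, together with the auxiliary fact — used implicitly in the very formulation of the theorem — that $\ERG$ is a Borel (in fact $G_\delta$) subset of $\PR$ in the weak$^\ast$ topology. These are the genuinely non-trivial points supplied by Farrell and Varadarajan; by contrast, the measurability of $\lambda\mapsto\int_\ts f\,\ud\lambda$ proved in Lemma~\ref{lem:measurability} is only the elementary half of what is needed to make sense of all the integrals occurring above.
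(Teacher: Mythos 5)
The paper does not prove this theorem: it is imported verbatim as a summary of the Farrell--Varadarajan ergodic decomposition theorem, with \cite[Theorem~27.5.7]{Zakrzewski} cited in place of a proof. Your proposal is therefore more detailed than the source, and your outline (reduction to a countable dense subgroup, disintegration of a fixed $\mu$ over the $\sigma$-algebra of $\mu$-essentially invariant sets, then the upgrade to a single everywhere-defined, everywhere-$G$-invariant Borel map $\beta$ that is consistent with every $\mu\in\INV$ simultaneously) is a faithful description of how the cited literature proceeds. You correctly identify the crux\textemdash the simultaneous, $\mu$-independent construction of $\beta$ and the Borel measurability of $\ERG$ in $\PR$\textemdash and you are explicit that you defer exactly those points to Farrell and Varadarajan rather than proving them; as a standalone proof the proposal is incomplete at precisely the step carrying all the difficulty, but since the paper supplies nothing there either, this is not a defect relative to the paper.

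One genuine slip in your reduction step: the map $g\mapsto\mu(gY\mathbin{\Delta}Y)$ is \emph{not} continuous for every $\mu\in\PR$. Take $\ts=G=\R$ acting by translation, $\mu=\delta_0$, and $Y=\{0\}$; then the function equals $0$ at $g=0$ and $1$ elsewhere. Continuity does hold for $\mu\in\INV$, via the identity $\mu(gY\mathbin{\Delta}Y)={\norm{\rep_\mu(g)[\ind_Y]_\mu-[\ind_Y]_\mu}}_1$ and the strong continuity of the Koopman representation on $\Ell^1(\ts,\mu)$ (Corollary~\ref{cor:automatic_strong_continuity_Polish_pair} with $p=1$ rather than $p=2$), and that is all you need for the equivalence of $G$-ergodicity and $\Gamma$-ergodicity, since ergodic measures are invariant by definition. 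For the separate claim that $\INV$ is unchanged when invariance is tested only on $\Gamma$, the correct justification is the weak$^*$ continuity of $g\mapsto g_*\mu$ for arbitrary $\mu\in\PR$\textemdash dominated convergence against test functions in $\Cb(\ts)$ together with first countability of $G$, plus the injectivity of the embedding $\PR\hookrightarrow\Cb(\ts)^*$\textemdash and not the continuity of $g\mapsto\mu(gY\mathbin{\Delta}Y)$.
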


A map $\beta$ as in Theorem~\ref{thm:measure_disintegration} is called a \emph{decomposition map}.

\begin{remark}\label{rem:measure_disintegration}\quad
\begin{enumerate}
\item\label{rem:measure_disintegration_1} If $\beta$ and $\beta^\prime$ are two decomposition maps, then they agree outside a Borel subset of $\ts$ that has zero measure under all invariant Borel probability measures on $\ts$; see~\cite[proof of Lemma~4.4]{Varadarajan63}.
\item\label{rem:measure_disintegration_2} We mention for the sake of completeness that \cite[Theorem~27.5.7]{Zakrzewski02} also asserts that $\INV$ and $\ERG$ are both Borel subsets of $\PR$. Furthermore, $\PR$ is Polish; see~\cite[p.~1118]{Zakrzewski02}.
\item\label{rem:measure_disintegration_3} It is worth noting that, if, in addition, $G$ is compact, then the ergodic Borel probability measures $\ERG$ on $\ts$ are in one-to-one correspondence with the orbits of $G$ in $\ts$, as follows. For $x_0\in\ts$, one associates with the $G$-orbit $Gx_0$ the Borel measure $\lambda_{Gx_0}$ on $\ts$ by
\begin{align}\label{eq:ergodic_orbit_measure_formula}
\lambda_{Gx_0}(Y):=\mu_{G}\left(\{g\in G : gx_0\in Y\}\right),
\end{align}
where $Y$ is a Borel subset $\ts$ and $\mu_{G}$ is the normalized Haar measure on $G$; this does not depend on the choice of the point $x_0$ in the orbit.  The $\lambda_{Gx_0}$ is the unique ergodic Borel probability measure supported on $Gx_0$, and the map $Gx_0\mapsto\lambda_{Gx_0}$ is a bijection between the set of $G$-orbits and $\ERG$; see \cite[p.~1119]{Zakrzewski02}. Since $\lambda_{Gx_0}$ is simply the push-forward of $\mu_G$ to $\ts$ via the map $g\mapsto gx_0$ ($g\in G$), we then have, for every bounded Borel measurable function $f$ on $\ts$,
\begin{equation}\label{eq:ergodic_integral_formula}
\int_\ts f(x)\,\ud\mu_{Gx_0}(x)=\int_G f(g x_0)\,\ud\mu_G(g).
\end{equation}
We will use this in Section~\ref{subsec:worked_example}.
\item\label{rem:measure_disintegration_4} If $(G,\ts)$ is a topological dynamical system, where $G$ is a compact Hausdorff group and $\ts$ is a locally compact Hausdorff space, then in \cite{Seda-Wickstead76} there is a description  of the invariant Baire measures on $\ts$ that is not unsimilar to Theorem~\ref{thm:measure_disintegration}. It seems plausible that also in this context factorization and disintegration theorems analogous to Theorem~\ref{thm:factorization} and~ \ref{thm:disintegrating_space_actions} can be obtained.
\item\label{rem:measure_disintegration_5} It is known that $\INV\neq\emptyset$ if, in addition, $G$ is amenable and $\ts$ is compact; see~\cite[Theorem~5.5]{Zakrzewski02}.
\end{enumerate}
\end{remark}

We fix a decomposition map $\beta$, and proceed towards the factorization Theorem~\ref{thm:factorization}. We need the following preparatory lemma. The function $f^\prime$ that occurs in it was also introduced in \cite[Lemma~4.3]{Varadarajan63} (with a similar statement) in the case where $f$ is a bounded Borel measurable function on $\ts$, but for us it essential that $f$ need not even be finite-valued.

\begin{lemma}\label{lem:integral_disintegration_basis} Let $(G,X)$ be a Polish topological dynamical system, where $G$ is locally compact, let $\mu\in\INV$, and let $f:\ts\to [0,\infty]$ be a Borel measurable extended function on $\ts$. For $x\in\ts$, define $f^\prime(x):=\int_{\ts} f(t)\,\ud\beta_{x}(t)$. Then $f^\prime:\ts\to[0,\infty]$ is Borel measurable, and the equality
\begin{align}\label{eq:integral_disintegration_basis}
\int_{\ts}f(x)\,\ud\mu(x)=\int_{\ts}f^\prime(x)\,\ud\mu(x).
\end{align}
holds in $[0,\infty]$.
\end{lemma}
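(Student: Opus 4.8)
The plan is to prove Lemma~\ref{lem:integral_disintegration_basis} by reducing from general nonnegative Borel measurable extended functions to characteristic functions, where the two listed ingredients—Lemma~\ref{lem:measurability} and Theorem~\ref{thm:measure_disintegration}\eqref{thm:measure_disintegration_3}—do all the work.

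\emph{Step 1: measurability of $f^\prime$.} First I would observe that $f^\prime$ is exactly the composition of the map $\lambda\mapsto\int_\ts f\,\ud\lambda$ from $\PR$ to $[0,\infty]$ with the decomposition map $\beta\colon\ts\to\ERG\subseteq\PR$. By Lemma~\ref{lem:measurability} the former is Borel measurable, and by Theorem~\ref{thm:measure_disintegration} the latter is Borel measurable; hence $f^\prime$ is Borel measurable as a composition of Borel measurable maps. (One should note $\ERG$ carries the subspace Borel structure from $\PR$, so this composition makes sense.)

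\emph{Step 2: the identity for characteristic functions.} For $f=\ind_Y$ with $Y$ a Borel subset of $\ts$, one has $f^\prime(x)=\int_\ts\ind_Y\,\ud\beta_x=\beta_x(Y)$, and then \eqref{eq:integral_disintegration_basis} reads $\mu(Y)=\int_\ts\beta_x(Y)\,\ud\mu(x)$, which is precisely Theorem~\ref{thm:measure_disintegration}\eqref{thm:measure_disintegration_3}.

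\emph{Step 3: bootstrap to general $f$.} By linearity of the integral $\int_\ts(\cdot)\,\ud\beta_x$ (for finite sums with nonnegative coefficients) and of $\int_\ts(\cdot)\,\ud\mu$, the identity extends from characteristic functions to nonnegative simple Borel functions; here one checks that $(\sum_k c_k\ind_{Y_k})^\prime=\sum_k c_k\ind_{Y_k}^\prime$ pointwise, so that both sides of \eqref{eq:integral_disintegration_basis} are additive. For general Borel measurable $f\colon\ts\to[0,\infty]$, pick an increasing sequence $(f_n)$ of nonnegative simple Borel functions with $f_n\uparrow f$ pointwise. For each fixed $x$, the monotone convergence theorem applied to the measure $\beta_x$ gives $f_n^\prime(x)=\int_\ts f_n\,\ud\beta_x\uparrow\int_\ts f\,\ud\beta_x=f^\prime(x)$, so $f_n^\prime\uparrow f^\prime$ pointwise; applying the monotone convergence theorem once more, now to $\mu$, on both sides of the (already established) equality $\int_\ts f_n\,\ud\mu=\int_\ts f_n^\prime\,\ud\mu$ yields \eqref{eq:integral_disintegration_basis} in $[0,\infty]$.

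I do not expect a genuine obstacle here; this is essentially a layered application of the monotone convergence theorem together with the definitional input from Theorem~\ref{thm:measure_disintegration}. The only points requiring a little care are (i) keeping the bookkeeping straight between the two distinct integrations (over $\beta_x$ fiberwise, and over $\mu$ globally) since both may take the value $+\infty$, which is why one works throughout in $[0,\infty]$ and invokes monotone convergence rather than dominated convergence; and (ii) confirming at the outset that $\lambda\mapsto\int_\ts f\,\ud\lambda$ really is the object to which Lemma~\ref{lem:measurability} applies, i.e. that $f$ need not be finite-valued or bounded for that lemma—which is indeed how Lemma~\ref{lem:measurability} was stated.
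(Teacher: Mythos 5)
Your proposal is correct and follows essentially the same route as the paper's proof: measurability of $f^\prime$ via Lemma~\ref{lem:measurability} and the Borel measurability of $\beta$, the base case $f=\ind_Y$ from part~\ref{thm:measure_disintegration_3} of Theorem~\ref{thm:measure_disintegration}, extension to simple functions by linearity, and then two applications of the monotone convergence theorem (fiberwise for $\beta_x$ and globally for $\mu$). No gaps.
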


\begin{proof}
The Borel measurability of $\beta$ and Lemma~\ref{lem:measurability} imply that $f^\prime$ is Borel measurable.
For the equality of the integrals, we first suppose that $f=\ind_{Y}$ for a Borel subset $Y$ of $\ts$. Then
\begin{align*}
\int_{\ts}f(x)\,\ud\mu(x)=\mu(Y)=\int_{\ts}\beta_{x}(Y)\,\ud\mu(x)=\int_{\ts}f^\prime(x)\,\ud\mu(x)
\end{align*}
by  part~\ref{thm:measure_disintegration_3} of Theorem~\ref{thm:measure_disintegration}. By linearity, this extends to the case where $f\geq 0$ is a simple function. We now choose a sequence $(f_{k})_{k=1}^{\infty}$ of simple functions such that $0\leq f_{k}\uparrow f$ pointwise on $\ts$.  By the monotone convergence theorem, $0\leq f_{k}^\prime(x)\uparrow f^\prime(x)$ for all $x\in X$ as $k\to\infty$. Two more applications of the monotone convergence theorem, combined with what we have already shown for the $f_k$, yield
\begin{align*}
\int_{\ts}f(x)\,\ud\mu(x)=\lim_{k\to\infty}\int_{\ts}f_{k}(x)\,\ud\mu(x)=\lim_{k\to\infty}\int_{\ts}f_k^\prime(x)\,\ud\mu(x)=\int_{\ts}f^\prime(x)\,\ud\mu(x).
\end{align*}
\end{proof}

Now the proof of the factorization theorem for the integral, which is reminiscent of the familiar combination of the Tonelli and Fubini theorems, is hardly more than a formality. To this end, we let $\nu$ be the push-forward measure of $\mu$ via the Borel measurable map $\beta:\ts\to\ERG$; thus $\nu$ is the Borel probability measure on $\ERG$ given by $\nu(A):=\mu(\beta^{-1}(A))$ for a Borel subset $A$ of $\ERG$. By general principles, see~\cite[Proposition~2.6.5]{Cohn93}, if $h:\ERG\rightarrow [0,\infty]$ is a Borel measurable extended function, then the equality
\begin{align}\label{eq:push_forward_formula}
\int_{\ts}(h\circ\beta)(x)\,\ud\mu(x)=\int_{\ERG}h(\lambda)\,\ud\nu(\lambda)
\end{align}
holds in $[0,\infty]$.

\begin{theorem}\label{thm:factorization}Let $(G,\ts)$ be a Polish topological dynamical system, where $G$ is locally compact, and let $\mu\in\INV$.
\begin{enumerate}
\item\label{thm:tonelli} If $f:\ts\to[0,\infty]$ is Borel measurable, then the extended function $\lambda\mapsto\int_\ts f(x)\,\ud\lambda(x)$, with values in $[0,\infty]$, is a Borel measurable function on $\ERG$. Furthermore, the equality
\begin{equation*}\label{eq:tonelli}
\int_\ts f(x)\,\ud\mu(x)=\int_{\ERG} \left(\int_\ts f(x)\,\ud\lambda(x)\right)\,\ud\nu(\lambda)
\end{equation*}
holds in $[0,\infty]$.
\item\label{thm:fubini} If $f\in\La^1(\ts,\mu)$, then the set of $\lambda\in\ERG$ such that $f\notin \La^{1}(\ts,\lambda)$ is a Borel subset of $\ERG$ that has $\nu$-measure zero.  For $\lambda\in\ERG$, let $I_f(\lambda):=\int_\ts f(x)\,\ud\lambda(x)$ if $f\in\La^{1}(\ts,\lambda)$, and let $I_f(\lambda):=0$ if $f\notin\La^{1}(\ts,\lambda)$. Then $I_f\in\La^{1}(\ERG,\nu)$, and
\begin{equation*}\label{eq:fubini}
\int_\ts f(x)\,\ud\mu(x)=\int_{\ERG} I_f(\lambda)\,\ud\nu(\lambda).
\end{equation*}
\end{enumerate}
\end{theorem}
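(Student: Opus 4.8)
The plan is to deduce both parts, essentially formally, from the measurability Lemma~\ref{lem:measurability}, the key Lemma~\ref{lem:integral_disintegration_basis}, and the push-forward identity~\eqref{eq:push_forward_formula}; the substantive work has already been absorbed into those statements (and into the disintegration Theorem~\ref{thm:measure_disintegration}). For part~\eqref{thm:tonelli}, I would first note that by Lemma~\ref{lem:measurability} the map $\lambda\mapsto\int_\ts f(x)\,\ud\lambda(x)$ is Borel measurable on $\PR$, so that its restriction $h$ to $\ERG$ is Borel measurable for the subspace Borel structure on $\ERG$. The crucial observation is then that the auxiliary function $f^\prime$ of Lemma~\ref{lem:integral_disintegration_basis} is precisely $h\circ\beta$, since $f^\prime(x)=\int_\ts f\,\ud\beta_x=h(\beta_x)$ as $\beta_x\in\ERG$. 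Consequently
\[
\int_\ts f\,\ud\mu=\int_\ts f^\prime\,\ud\mu=\int_\ts (h\circ\beta)\,\ud\mu=\int_\ERG h\,\ud\nu,
\]
where the first equality is~\eqref{eq:integral_disintegration_basis} and the last is~\eqref{eq:push_forward_formula}; this is the asserted identity.

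For part~\eqref{thm:fubini}, I would write $f=f^+-f^-$ and apply part~\eqref{thm:tonelli} to the nonnegative Borel functions $f^+$, $f^-$, and $|f|$. Applying it to $|f|$ gives $\int_\ERG\bigl(\int_\ts|f|\,\ud\lambda\bigr)\,\ud\nu(\lambda)=\int_\ts|f|\,\ud\mu<\infty$, so the Borel measurable function $\lambda\mapsto\int_\ts|f|\,\ud\lambda$ is finite $\nu$-almost everywhere; its preimage of $\{\infty\}$ is exactly $\{\lambda\in\ERG:f\notin\La^1(\ts,\lambda)\}$, which is therefore a Borel set of $\nu$-measure zero. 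Let $E$ be its complement. On $E$ one has $I_f=\bigl(\int_\ts f^+\,\ud\lambda\bigr)-\bigl(\int_\ts f^-\,\ud\lambda\bigr)$, a difference of finite-valued Borel measurable functions, so $I_f$, which is defined to vanish off $E$, is Borel measurable; and since $|I_f|\le\int_\ts|f|\,\ud\lambda$ pointwise on $\ERG$, part~\eqref{thm:tonelli} applied to $|f|$ yields $I_f\in\La^1(\ERG,\nu)$. Finally, because $\nu(E^c)=0$, I would split $\int_\ERG I_f\,\ud\nu$ into its $f^+$- and $f^-$-contributions and invoke part~\eqref{thm:tonelli} for $f^+$ and $f^-$ separately, obtaining $\int_\ERG I_f\,\ud\nu=\int_\ts f^+\,\ud\mu-\int_\ts f^-\,\ud\mu=\int_\ts f\,\ud\mu$.

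The main point requiring care is bookkeeping rather than a genuine obstacle: the functions involved are extended-valued, and a single ergodic measure $\lambda$ may assign infinite mass to $f^+$ or $f^-$ even when $\mu$ does not, so one cannot manipulate $\int_\ts f\,\ud\lambda$ before isolating the $\nu$-null set $E^c$; likewise one must keep the distinction, emphasized in Section~\ref{sec:notation_and_terminology}, between an extended function $f$ and its equivalence classes $[f]_\mu$ and $[f]_\lambda$. Beyond this, no further analytic input is needed once Theorem~\ref{thm:measure_disintegration} and Lemma~\ref{lem:integral_disintegration_basis} are available.
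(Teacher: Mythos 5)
Your proposal is correct and follows essentially the same route as the paper: part~\eqref{thm:tonelli} is obtained by identifying the function $f^\prime$ of Lemma~\ref{lem:integral_disintegration_basis} with $h\circ\beta$ and combining \eqref{eq:integral_disintegration_basis} with the push-forward formula \eqref{eq:push_forward_formula}, and part~\eqref{thm:fubini} is deduced by applying part~\eqref{thm:tonelli} to $f^+$, $f^-$, and $|f|$. Your treatment of part~\eqref{thm:fubini} is in fact more detailed than the paper's one-line reduction, correctly isolating the $\nu$-null exceptional set before manipulating the fibrewise integrals.
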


\begin{proof}
As to the first statement, we define $h(\lambda)=\int_\ts f(t)\,\ud\lambda(t)$. Lemma~\ref{lem:measurability} shows that $h$ is a Borel measurable function on $\ERG$. In the notation of Lemma~\ref{lem:integral_disintegration_basis}, we have $f^\prime=h\circ\beta$, so that \eqref{eq:integral_disintegration_basis} reads as $\int_X f(x)\,\ud\mu(x)=\int_X (h\circ\beta)(x)\,\ud\mu(x)$. An application of \eqref{eq:push_forward_formula} completes the proof of the first part. The second statement follows easily from an application of the first statement to the positive and negative parts of $f$.
\end{proof}

\begin{remark}\quad
\begin{enumerate}
\item
It follows from part \ref{rem:measure_disintegration_1} of Remark~\ref{rem:measure_disintegration} that $\nu$ does not depend on the choice of the decomposition map $\beta$.
\item
If $f$ is the characteristic function of a Borel subset $Y$ of $\ts$, then Theorem~\ref{thm:factorization} asserts that $\mu(Y)=\int_\ERG \lambda(Y)\,\ud\nu(\lambda)$. This formula occurs in \cite[Theorem~4.4]{Varadarajan63}. We are not aware of a reference for the general theorem as above.
\end{enumerate}
\end{remark}

In the next section, we will need the following disintegration of the $p$-norm, valid in the context of Theorem~\ref{thm:factorization}.

\begin{corollary}\label{cor:disintegration_p-norm}
Let $1\leq p<\infty$, and let $f\in \La^{p}(\ts,\mu)$. Then the set of $\lambda\in\ERG$ such that $f\notin \La^{p}(\ts,\lambda)$ is a Borel subset of $\ERG$ that has $\nu$-measure zero.  For $\lambda\in\ERG$, let $n_f(\lambda):={\norm{f}}_{\La^{p}(\ts,\lambda)}$ if $f\in\La^{p}(\ts,\lambda)$, and let $n_f(\lambda):=0$ otherwise. Then $n_f\in\La^{p}(\ERG,\nu)$, and
\begin{align}\label{eq:disintegration_p-norm}
{\norm{f}}_{\La^{p}(\ts,\mu)}={\norm{n_f}}_{\La^{p}(\ERG,\nu)}=\left(\int_{\ERG}n_f(\lambda)^{p}\,\ud\nu(\lambda)\right)^{1/p}.
\end{align}
\end{corollary}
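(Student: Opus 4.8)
The plan is to apply the factorization Theorem~\ref{thm:factorization} to the nonnegative Borel measurable extended function $\abs{f}^{p}$, and then take $p$-th roots. Concretely, first I would apply part~\eqref{thm:tonelli} of Theorem~\ref{thm:factorization} with the integrand $\abs{f}^{p}:\ts\to[0,\infty]$, which is indeed Borel measurable since $f$ is. This immediately gives that $\lambda\mapsto\int_{\ts}\abs{f(x)}^{p}\,\ud\lambda(x)$ is a Borel measurable function on $\ERG$ with values in $[0,\infty]$, and that
\begin{equation*}
\int_{\ts}\abs{f(x)}^{p}\,\ud\mu(x)=\int_{\ERG}\left(\int_{\ts}\abs{f(x)}^{p}\,\ud\lambda(x)\right)\ud\nu(\lambda).
\end{equation*}
Since $f\in\La^{p}(\ts,\mu)$, the left-hand side is finite, so the nonnegative Borel measurable function $\lambda\mapsto\int_{\ts}\abs{f(x)}^{p}\,\ud\lambda(x)$ is $\nu$-integrable; in particular it is finite for $\nu$-almost all $\lambda$, and the set where it equals $\infty$ is a Borel subset of $\ERG$ of $\nu$-measure zero. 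This Borel set is precisely $\{\lambda\in\ERG : f\notin\La^{p}(\ts,\lambda)\}$, establishing the first assertion.

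Next I would identify the function $n_f$ with (a version of) $\lambda\mapsto\bigl(\int_{\ts}\abs{f(x)}^{p}\,\ud\lambda(x)\bigr)^{1/p}$. On the co-null Borel set where $f\in\La^{p}(\ts,\lambda)$, we have $n_f(\lambda)=\norm{f}_{\La^{p}(\ts,\lambda)}=\bigl(\int_{\ts}\abs{f(x)}^{p}\,\ud\lambda(x)\bigr)^{1/p}$, so $n_f(\lambda)^{p}=\int_{\ts}\abs{f(x)}^{p}\,\ud\lambda(x)$ there; on the $\nu$-null complement $n_f$ is defined to be $0$ and hence $n_f^{p}$ differs from $\lambda\mapsto\int_{\ts}\abs{f(x)}^{p}\,\ud\lambda(x)$ only on that $\nu$-null set. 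Since $t\mapsto t^{1/p}$ is continuous (hence Borel) on $[0,\infty]$, the composition shows $n_f$ is Borel measurable, and $n_f^{p}$ equals $\lambda\mapsto\int_{\ts}\abs{f(x)}^{p}\,\ud\lambda(x)$ $\nu$-almost everywhere. Therefore $\int_{\ERG}n_f(\lambda)^{p}\,\ud\nu(\lambda)=\int_{\ERG}\bigl(\int_{\ts}\abs{f(x)}^{p}\,\ud\lambda(x)\bigr)\ud\nu(\lambda)=\int_{\ts}\abs{f(x)}^{p}\,\ud\mu(x)<\infty$, which shows $n_f\in\La^{p}(\ERG,\nu)$ and, upon taking $p$-th roots, yields
\begin{equation*}
\norm{f}_{\La^{p}(\ts,\mu)}=\left(\int_{\ts}\abs{f(x)}^{p}\,\ud\mu(x)\right)^{1/p}=\left(\int_{\ERG}n_f(\lambda)^{p}\,\ud\nu(\lambda)\right)^{1/p}=\norm{n_f}_{\La^{p}(\ERG,\nu)},
\end{equation*}
which is exactly \eqref{eq:disintegration_p-norm}.

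There is essentially no hard obstacle here; the result is a direct corollary, and the only points requiring any care are bookkeeping ones: checking that the Borel set where $\int_{\ts}\abs{f}^{p}\,\ud\lambda=\infty$ really coincides with $\{\lambda : f\notin\La^{p}(\ts,\lambda)\}$ (which is immediate from the definition of $\La^{p}(\ts,\lambda)$), and noting that modifying $n_f$ on a $\nu$-null set does not affect either its membership in $\La^{p}(\ERG,\nu)$ or the value of the integral, so the convention $n_f(\lambda):=0$ off the good set is harmless. One should also remark that the Borel measurability of $\lambda\mapsto\int_{\ts}\abs{f}^{p}\,\ud\lambda$ from Theorem~\ref{thm:factorization}\eqref{thm:tonelli} (which in turn rests on Lemma~\ref{lem:measurability}) is what legitimizes speaking of ``a Borel subset of $\ERG$'' in the statement.
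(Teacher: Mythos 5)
Your proof is correct and follows essentially the same route as the paper: the paper's entire proof is ``apply part (2) of Theorem~\ref{thm:factorization} to $|f|^p$,'' whereas you apply part (1) to $|f|^p$ and then spell out the finiteness and null-set bookkeeping that part (2) already packages (for a nonnegative integrand the two parts coincide). All the details you supply are accurate, so this is just a slightly more unpacked version of the same argument.
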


\begin{proof}
Apply part~\ref{thm:fubini} of Theorem~\ref{thm:factorization} to ${|f|}^p$.
\end{proof}

\subsection{Disintegrating the representation}\label{subsec:disintegrating_the_representation}

Throughout this section, $(G,\ts)$ is a Polish topological dynamical system, where $G$ is locally compact, such that the set $\INV$ of $G$-invariant Borel probability measures on $\ts$ is not empty, $\mu$ is an element of $\INV$, and $\nu$ is the push-forward of $\mu$ to the ergodic Borel probability measures $\ERG$ via a decomposition map $\beta\colon\ts\to\ERG$ as in Section~\ref{subsec:disintegrating_the_measure}. We fix $1\leq p<\infty$. $G$ acts canonically on $\Ell^p(\ts,\mu)$ as isometric lattice isomorphisms, and, using the framework provided in Section~\ref{subsec:direct_integrals_of_Banach_spaces}, we will now proceed to disintegrate this representation into order indecomposables as an $\kstext$-direct integral; see Theorem~\ref{thm:disintegrating_space_actions}.

Let $\vs$ be the vector lattice of all simple scalar-valued functions on $\ts$. For each $\lambda\in\ERG$ (which will play the role of $\Omega$ in Section~\ref{subsec:direct_integrals_of_Banach_spaces}),
\begin{align*}
{\norm{f}}_\lambda:={\norm{f}}_{\La^{p}(\ts,\lambda)}\qquad(f\in \vs)
\end{align*}
defines a lattice semi-norm on $\vs$; the $p$-dependence has been suppressed in the notation for simplicity. By Corollary~\ref{cor:disintegration_p-norm}, $\lambda\mapsto {\norm{f}}_{\lambda}$ is a Borel measurable function on $\ERG$ for all $f\in \vs$. Hence, in the terminology of Section~\ref{subsec:direct_integrals_of_Banach_spaces}, $\left\{{\norm{\,\cdot\,}}_{\lambda}\right\}_{\lambda\in\ERG}$ is a measurable family of lattice semi-norms on $\vs$. For each $\lambda\in\ERG$, the completion of $\vs/\ker({\norm{\,\cdot\,}}_{\lambda})$ with respect to ${\norm{\,\cdot\,}}_{\lambda}$ is the Banach lattice $\Ell^{p}(\ts,\lambda)$, so that $\{\Ell^{p}(\ts,\lambda)\}_{\lambda\in\ERG}$ is a measurable family of Banach lattices over $(\ERG,\nu,\vs)$.

A section of $\{\Ell^p(\ts,\lambda)\}_{\lambda\in\ERG}$ is now a map $\sect:\ERG\to\bigsqcup_{\lambda\in\ERG}\Ell^p(\ts,\lambda)$ such that $\sect(\lambda)\in\Ell^p(\ts,\lambda)$ for each $\lambda\in\ERG$. A simple section is a section $\sect$ for which there exist $n\in\N$, simple functions $f_{1},\ldots, f_{n}$ on $\ts$, and Borel subsets $A_{1},\ldots, A_{n}$ of $\ERG$ such that $\sect(\lambda)=\left[\sum_{k=1}^{n}\ind_{A_{k}}(\lambda) f_{k}\right]_{\lambda}$ for all $\lambda\in\ERG$.  A section $\sect$ of $\{\Ell^{p}(\ts,\lambda)\}_{\lambda\in\ERG}$ is measurable if there exists a sequence $(\sect_{k})_{k=1}^{\infty}$ of simple sections such that ${\norm{\sect(\lambda)-\sect_{k}(\lambda)}}_{\lambda}\to 0$ as $k\to\infty$ for all $\lambda\in\ERG$.

The direct integral $\int_{\ERG}^{\oplus}\Ell^{p}(\ts,\lambda)\,\ud\nu(\lambda)$ consists of the equivalence classes $[\sect]_{\nu}$ (for the equivalence relation of $\nu$-almost everywhere equality) of measurable sections $\sect$ of $\{\Ell^{p}(\ts,\lambda)\}_{\lambda\in\ERG}$; the $\kstext$-direct integral $\left(\int_{\ERG}^{\oplus}\Ell^{p}(\ts,\lambda)\,\ud\nu(\lambda)\right)_\ks$ consists of those $[\sect]_{\nu}\in \int_{\ERG}^{\oplus}\Ell^{p}(\ts,\lambda)\,\ud\nu(\lambda)$ for which the (measurable) function $\lambda\mapsto {\norm{\sect(\lambda)}}_{\Ell^{p}(\ts,\lambda)}$ is an element of $\La^{p}(\ERG,\nu)$, and it carries the norm
\begin{align*}
{\norm{[\sect]_{\nu}}}_{p}:=\left(\int_{\ERG}{\norm{\sect(\lambda)}}_{\lambda}^{p}\,\ud\nu(\lambda)\right)^{1/p}\quad([s]_\nu\in\left(\int_{\ERG}^{\oplus}\Ell^{p}(\ts,\lambda)\,\ud\nu(\lambda)\right)_\ks).
\end{align*}
By Proposition~\ref{prop:direct_integral_is_complete}, $\left(\int_{\ERG}^{\oplus}\Ell^{p}(\ts,\lambda)\,\ud\nu(\lambda)\right)_\ks$ is a Banach lattice when supplied with this norm and with the ordering defined by
\begin{align*}
[\sect]_{\nu}\geq 0\Leftrightarrow \textrm{$\sect(\lambda)\geq 0$ in $\Ell^{p}(\ts,\lambda)$ for $\nu$-almost all $\lambda\in\ERG$}
\end{align*}
for $[\sect]_{\nu}\in\left(\int_{\ERG}^{\oplus}\Ell^{p}(\ts,\lambda)\,\ud\nu(\lambda)\right)_\ks$.

After having thus set the scene, the first thing to show that is the Banach lattices $\Ell^p(\ts,\mu)$ and $\left(\int_{\ERG}^{\oplus}\Ell^{p}(\ts,\lambda)\,\ud\nu(\lambda)\right)_\ks$ are isometrically lattice isomorphic. The basic idea for the pertinent map is quite simple: if $[f]_\mu\in\Ell^p(\ts,\mu)$ is given, this should correspond to the $\nu$-equivalence class of the section $\lambda\mapsto[f]_\lambda$ ($\lambda\in\ERG$). Apart from measurability issues, there are two problems here: $f$ need not be in $\mathcal L^p(\ts,\lambda)$ for all $\lambda$, and the image of $[f]_\mu$ could conceivably depend on the chosen representative $f$. As we will see, there exists a solution to the first problem such that the second does not occur, and such that there are no measurability issues. We make some further comments on this at the conclusion of the example in Section~\ref{subsec:worked_example}.

Implementing what will turn out to be the solution, we define, for $f\in\La^{p}(\ts,\mu)$, the section $\sect_{f}$ of $\{\Ell^{p}(\ts,\lambda)\}_{\lambda\in\ERG}$ by
\begin{align}\label{eq:specific_section}
\sect_{f}(\lambda):=\begin{cases}[f]_{\lambda} & \text{if }f\in\La^{p}(\ts,\lambda);
\\ [0]_{\lambda} & \text{otherwise}.\end{cases}
\end{align}
We know from Corollary~\ref{cor:disintegration_p-norm} that the exceptional set in this definition is a Borel subset of $\ERG$ that has $\nu$-measure zero. This easily implies that, for $f,g\in\La^{p}(\ts,\mu))$,
\begin{equation}\label{eq:additivity}
\sect_{f+g}(\lambda)=\sect_f(\lambda)+\sect_g(\lambda)\textup{ for }\nu\textup{-almost }\lambda\in\ERG,
\end{equation}
and clearly
\begin{equation}\label{eq:homogeneity}
\sect_{\alpha f}=\alpha\sect_f \quad(\alpha\in\R,\, f\in\La^{p}(\ts,\mu)).
\end{equation}

The following result takes care of measurability.

\begin{lemma}\label{lem:section_is_measurable}
Let $f\in\La^{p}(\ts,\mu)$, and  define $\sect_{f}$ as in \eqref{eq:specific_section}. Then $\sect_f$ is a measurable section of $\{\Ell^{p}(\ts,\lambda)\}_{\lambda\in\ERG}$.
\end{lemma}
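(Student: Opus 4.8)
The plan is to produce, for the given $f\in\La^{p}(\ts,\mu)$, an explicit sequence of simple sections converging to $\sect_{f}$ in every fibre; since simple sections are measurable by fiat, this establishes the measurability of $\sect_{f}$. First I would invoke the usual simple-function approximation for Borel measurable extended functions to fix a sequence $(g_{k})_{k=1}^{\infty}$ of finite-valued simple functions on $\ts$ — hence elements of $\vs$ — with $\abs{g_{k}}\le\abs{f}$ pointwise on $\ts$ and $g_{k}(x)\to f(x)$ for every $x\in\ts$. Next I would appeal to Corollary~\ref{cor:disintegration_p-norm} for the fact that the exceptional set $N:=\{\lambda\in\ERG : f\notin\La^{p}(\ts,\lambda)\}$ occurring in \eqref{eq:specific_section} is a Borel subset of $\ERG$ with $\nu(N)=0$. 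I would then define, for each $k\in\N$, the section $\sect_{k}$ of $\{\Ell^{p}(\ts,\lambda)\}_{\lambda\in\ERG}$ by $\sect_{k}(\lambda):=\bigl[\ind_{N^{c}}(\lambda)\,g_{k}\bigr]_{\lambda}$; since $g_{k}\in\vs$ and $N^{c}$ is a Borel subset of $\ERG$, each $\sect_{k}$ is a simple section in the sense of Section~\ref{subsec:direct_integrals_of_Banach_spaces}.

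Finally I would verify that $\sect_{k}(\lambda)\to\sect_{f}(\lambda)$ in $\Ell^{p}(\ts,\lambda)$ as $k\to\infty$ for \emph{every} $\lambda\in\ERG$, which is exactly what the definition of a measurable section demands. For $\lambda\in N$ this is immediate, as $\sect_{k}(\lambda)=[0]_{\lambda}=\sect_{f}(\lambda)$ for all $k$. For $\lambda\in N^{c}$ one has $f\in\La^{p}(\ts,\lambda)$, so $\abs{f}<\infty$ $\lambda$-almost everywhere; then $g_{k}\to f$ $\lambda$-almost everywhere while ${\abs{g_{k}-f}}^{p}\le 2^{p}{\abs{f}}^{p}\in\La^{1}(\ts,\lambda)$, so the dominated convergence theorem yields ${\norm{g_{k}-f}}_{\La^{p}(\ts,\lambda)}\to 0$, i.e.\ $\sect_{k}(\lambda)=[g_{k}]_{\lambda}\to[f]_{\lambda}=\sect_{f}(\lambda)$.

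The one step that needs care — and the reason the naive choice $\lambda\mapsto[g_{k}]_{\lambda}$ does not work — is the behaviour on the exceptional set $N$: since each $g_{k}$ is bounded, $[g_{k}]_{\lambda}$ makes sense in $\Ell^{p}(\ts,\lambda)$ for \emph{every} $\lambda$, but on $N$ these classes will in general not tend to the value $[0]_{\lambda}$ that $\sect_{f}$ takes there. Multiplying by $\ind_{N^{c}}$ repairs this, and the move is legitimate precisely because Corollary~\ref{cor:disintegration_p-norm} guarantees that $N$ is Borel — so that $\sect_{k}$ remains a genuine simple section — and $\nu$-null. No further measurability obstacle arises, since we only ever modify the fixed simple functions $g_{k}$ by the Borel indicator $\ind_{N^{c}}$ of a subset of $\ERG$.
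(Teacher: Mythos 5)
Your argument is correct and is essentially identical to the paper's proof: both approximate $f$ pointwise by dominated simple functions, cut off by the indicator of the Borel exceptional set from Corollary~\ref{cor:disintegration_p-norm} (your $N^{c}$ is the paper's $A$), and conclude by dominated convergence in each fibre. Your closing remark on why the naive sections $\lambda\mapsto[g_{k}]_{\lambda}$ would fail on $N$ is a helpful gloss but does not change the substance.
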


\begin{proof}
There exists a sequence $(f_{k})_{k=1}^{\infty}\subseteq\vs$ such that, for all $x\in\ts$, $\abs{f_{k}(x)}\leq \abs{f(x)}$ and $f_{k}(x)\to f(x)$ as $k\to\infty$. Let $A:=\left\{\lambda\in\ERG : f\in\La^{p}(\ts,\lambda)\right\}$, and,  for $k\in\N$, let $\sect_{k}(\lambda):=[\ind_A(\lambda)f_{k}]_{\lambda} $ ($\lambda\in\ERG$). Since $A$ is a Borel subset of $\ERG$, $\sect_{k}$ is a simple section for each $k\in\N$. For $\lambda\notin A$, we have $\sect_{k}(\lambda)=[0]_{\lambda}=\sect_{f}(\lambda)$ for all $k\in\N$. For $\lambda\in A$, the dominated convergence theorem implies that
\begin{align*}
{\norm{\sect_{f}(\lambda)-\sect_{k}(\lambda)}}_{\lambda}={\norm{[f]_{\lambda}-[f_{k}]_{\lambda}}}_\lambda\to 0
\end{align*}
as $k\to\infty$.  Hence $\sect_k(\lambda)\to\sect_{f}(\lambda)$ for all $\lambda\in\ERG$, and we conclude that $\sect_{f}$ is measurable.
\end{proof}

If $f,g\in\La^{p}(\ts,\mu)$, and $[f]_\mu=[g]_\mu$, then, as the reader will easily verify, it follows from an application of \eqref{eq:disintegration_p-norm} to $f-g$ that $s_f(\lambda)=s_g(\lambda)$ for $\nu$-almost $\lambda\in\ERG$. Therefore, there is a well-defined map $\isom\colon\Ell^{p}(X,\mu)\to\int_{\ERG}^{\oplus}\Ell^{p}(\ts,\lambda)\,\ud\nu(\lambda)$, given by
\[
\isom([f]_\mu):=[\sect_f]_\nu (f\in\La^{p}(\ts,\mu)).
\]
By \eqref{eq:additivity} and \eqref{eq:homogeneity}, $\isom$ is linear.

If $f\in \Ell^{p}(\ts,\lambda)$, then, in the notation of Corollary~\ref{cor:disintegration_p-norm}, $n_f(\lambda)={\norm{\sect_f(\lambda)}}_\lambda$ for all $\lambda\in\ERG$. Since $n_f\in\La^p(\ERG,\nu)$ by Corollary~\ref{cor:disintegration_p-norm}, we have $S([f]_\mu)\in \left(\int_{\ERG}^{\oplus}\Ell^{p}(\ts,\lambda)\,\ud\nu(\lambda)\right)_\ks$. In fact,  \eqref{eq:disintegration_p-norm} yields
\begin{align*}
{\norm{[f]_\mu}}_{\Ell^p(\ts,\mu)}=\left(\int_\ERG n_f(\lambda)^p\,\ud\nu(\lambda)\right)^{1/p}=\left(\int_\ERG {\norm{\sect_f(\lambda)}}_\lambda^p\,\ud\nu(\lambda)\right)^{1/p}={\norm{\isom([f]_\mu)}}_p.
\end{align*}

We conclude that $\isom$ is an isometry of $\Ell^p(X,\mu)$ into $\left(\int_{\ERG}^{\oplus}\Ell^{p}(\ts,\lambda)\,\ud\nu(\lambda)\right)_{\ks}$.

In fact, $S$ is also surjective. To prove this, it is, according to the density statement in Lemma~\ref{lem:simple_sections_lie_dense}, sufficient to prove that all $\nu$-almost everywhere equivalence classes of simple sections are in the range of $S$. For this, in turn, it is sufficient to prove that the $\nu$-almost everywhere equivalence class of every simple section of the form $s(\lambda)=[\ind_A (\lambda)f]_\lambda$, where $A$ is a Borel subset of $\ERG$ and $f\in V$ is a simple function on $\ts$, is in the range of $S$. To this end, consider $f^\prime:=\ind_{\beta^{-1}(A)}f$. Then $f^\prime$ is a simple function on $\ts$, so $f^\prime\in\La^p(X,\mu)$. Since $f\in\La^p(X,\lambda)$ for all $\lambda\in\ERG$, the exceptional set in \eqref{eq:specific_section} is empty, and $s_{f^\prime}(\lambda)=[f^\prime]_\lambda$ for all $\lambda\in\ERG$. We claim that $s_{f^\prime}=s$, i.e.\ that $[\ind_{\beta^{-1}(A)}f]_\lambda=[\ind_A(\lambda) f]_\lambda$ for all $\lambda\in\ERG$.  For this, we use part~\ref{thm:measure_disintegration_2} of Theorem~\ref{thm:measure_disintegration}, distinguishing two cases. If $\lambda\in A$, then $\beta^{-1}(\{\lambda\})\subseteq\beta^{-1}(A)\subseteq\ts$. Since $\lambda(\beta^{-1}(\{\lambda\}))=\lambda(X)=1$, we have $\lambda((\beta^{-1}(A))^c)=0$. But then $[\ind_{\beta^{-1}(A)}f]_\lambda=[f]_\lambda$, and this equals $[\ind_A(\lambda)f]_\lambda=[1\cdot f]_\lambda$. If $\lambda\notin A$, then $\beta^{-1}(A)\cap\beta^{-1}(\{\lambda\})=\emptyset$, so that $\lambda(\beta^{-1}(A))=0$. Hence $[\ind_{\beta^{-1}(A)}f]_\lambda=[0]_\lambda$, and again this equals $[\ind_A(\lambda)f]_\lambda=[0\cdot f]_\lambda$. Thus $s_{f^\prime}=s$, as claimed, and then certainly $S([f^\prime]_\mu)=[s_{f^\prime}]_\nu=[s]_\nu$.

Furthermore, $\isom$ is a lattice homomorphism. Indeed, if $f\in\La^p(X,\mu)$ and $\lambda\in\ERG$, then $f\in\La^p(X,\lambda)$ if and only if $|f|\in\La^p(X,\lambda)$. This implies that $s_{|f|}(\lambda)=[|f|]_{\lambda}=|[f]_\lambda|$ for all $\lambda\in\ERG$. It follows form this that $|\isom([f]_\mu)|=\isom(|[f]_{\mu}|)$ for all $f\in\La^p(X,\mu)$.

We conclude that $\isom$ is an isometric lattice isomorphism between  $\Ell^p(X,\mu)$ and $\left(\int_{\ERG}^{\oplus}\Ell^{p}(\ts,\lambda)\,\ud\nu(\lambda)\right)_{\Ell^p}$.

We will now show that, under $S$, the canonical representation of $G$ on the space $\Ell^p(X,\mu)$ corresponds to the direct integral of the canonical representations of $G$ on the spaces $\Ell^p(X,\lambda)$ for $\lambda\in\ERG$. To see this fact (which almost comes for free now), we start\textemdash in the terminology of Section~\ref{subsec:direct_integrals_of_representations}\textemdash with the canonical `core' representation $\widetilde\rep$ of $G$ on the vector space $\vs$ of simple functions on $X$, defined by $(\widetilde\rep(g)f)(x):=f(g^{-1}x)$ ($g\in G,\,f\in\vs,\, x\in X$). Since ${\norm{\widetilde \rep(g)f}_\lambda=\norm{f}}_\lambda$ ($g\in G,\,f\in \vs$), this `core' representation $\widetilde\rep$ is pointwise essentially bounded. As explained in Section~\ref{subsec:direct_integrals_of_representations}, there is then a natural family $\{\rep_\lambda\}_{\lambda\in\ERG}$ of associated representations of $G$ as bounded operators on the respective completions of the spaces $(V/\ker{\norm{\,\cdot\,}}_\lambda,{\norm{\,\cdot\,}}_\lambda)$, i.e.\ on the spaces $\Ell^p(X,\lambda)$ ($\lambda\in\ERG$); these representations are determined by $\rep_\lambda(g)[f]_{\lambda}=[\widetilde\rep(g)f]_\lambda$ ($g\in G,\,f\in\vs,\,\lambda\in\ERG$). By the density of the equivalence classes of the simple functions in each $\Ell^p(X,\lambda)$ ($\lambda\in\ERG$), we see that these representations $\rep_\lambda$, as originating from $\widetilde\rep$,  are precisely the natural representations of $G$ on the spaces $\Ell^p(X,\lambda)$. As is also explained in Section~\ref{subsec:direct_integrals_of_representations}, measurability issues related to families of operators are automatically taken care of in this situation of a `core', so that the family $\{\rep_\lambda\}_{\lambda\in\ERG}$ is a decomposable representation $\rep_p=\int^\oplus_\ERG \rep_\lambda\,\ud\nu(\lambda)$ of $G$ as bounded operators on the $\kstext$-direct integral $\left(\int_\ERG^\oplus \Ell^p(X,\lambda)\,\ud\nu(\lambda)\right)_\ks$. We claim that the canonical representation $\rep_\mu$ on $\Ell^p(X,\mu)$ and the representation $\rep_p$ on $\left(\int_\ERG^\oplus \Ell^p(X,\lambda)\,\ud\nu(\lambda)\right)_\ks$ correspond under the isomorphism $\isom$ between these spaces. To see this, we let $f\in V\subseteq\La^p(X,\mu)$ and $g\in G$. Then $\widetilde\rep(g)f\in\La^p(X,\lambda)$ for all $\lambda\in\ERG$, so that $s_{\widetilde\rep(g)f}(\lambda)=[\widetilde\rep(g)f]_\lambda$ for all $\lambda\in\ERG$. Unwinding the definitions, we then see that
\begin{align*}
S(\rep_{\mu}(g)[f]_\mu)&=S([\widetilde\rep(g)f]_\mu)\\
&=[s_{\widetilde\rep(g)f}]_\nu\\
&=[\lambda\mapsto[\widetilde\rep(g)f]_\lambda]_\nu\\
&=[\lambda\mapsto\rep_\lambda(g)[f]_\lambda]_\nu\\
&=\left(\int_\ERG^\oplus \rep_\lambda(g)\,\ud\nu(\lambda)\right)_\ks\,\, ([\lambda\mapsto[f]_\lambda]_\nu)\\
&=\left(\int_\ERG^\oplus \rep_\lambda(g)\,\ud\nu(\lambda)\,\,\right)_\ks([s_f]_\nu)\\
&=\left[\left(\int_\ERG^\oplus \rep_\lambda\,\ud\nu(\lambda)\right)_\ks(g)\right]\,\, (S([f]_\mu)).
\end{align*}
By the density of the $\mu$-equivalence classes of the simple functions in $\Ell^p(X,\mu)$, our claim then follows.

We collect some of the main results so far in the following theorem. The added final part follows from part~\ref{prop:characterization_band_irreducibility_4} of Proposition~\ref{prop:characterization_band_irreducibility}, and it shows that the canonical representation of $G$ as isometric lattice automorphisms of $\Ell^p(\ts,\mu)$ can be disintegrated into order indecomposable similar representations.

\begin{theorem}\label{thm:disintegrating_space_actions}
Let $(G,\ts)$ be a Polish topological dynamical system, where $G$ is locally compact. Suppose that there exists an invariant Borel probability measure $\mu$ on $\ts$.  Let $\ERG$ be the non-empty set of ergodic Borel probability measures on $\ts$, and supply $\ERG$ with the weak$^*$-topology induced by $\Ce_{\textup{b}}(\ts)$.

Let $\beta:\ts\to\ERG$ be a decomposition map as in Theorem~\ref{thm:measure_disintegration}, and let $\nu$ be the push-forward measure of $\mu$ via $\beta$, so that $\nu$ is a Borel probability measure on $\ERG$ that is independent of the choice of $\beta$.

Let $1\leq p<\infty$.

\begin{enumerate}
\item Let $V$ be the vector space of simple functions on $\ts$. Then ${\{{\norm{\,\cdot\,}}_\lambda\}}_{\lambda\in\ERG}$ is a measurable family of semi-norms on $V$. The resulting family of completions of the spaces $(V/\ker{\norm{\,\cdot\,}}_\lambda,{\norm{\,\cdot\,}}_\lambda)$ is the family $\{\Ell^p(\ts,\lambda)\}_{\lambda\in\ERG}$, which is a measurable family of Banach lattices over $(\ERG,\nu,V)$. Therefore, the $\kstext$-direct integral $\left(\int_{\ERG}^{\oplus}\Ell^{p}(\ts,\lambda)\,\ud\nu(\lambda)\right)_\ks$ of this family with respect to $\nu$ can be defined, and this space is a Banach lattice;
\item Define $\isom:\Ell^{p}(X,\mu)\to\left(\int_{\ERG}^{\oplus}\Ell^{p}(\ts,\lambda)\,\ud\nu(\lambda)\right)_{\ks}$ by $\isom([f]_{\mu}):=[\sect_{f}]_{\nu}$ \textup{(}$f\in\La^{p}(\ts,\mu)$\textup{)}, where $\sect_f$ is as defined in \eqref{eq:specific_section}. Then $\isom$ is an isometric lattice isomorphism between the Banach lattices $\Ell^{p}(X,\mu)$ and $\left(\int_{\ERG}^{\oplus}\Ell^{p}(\ts,\lambda)\,\ud\nu(\lambda)\right)_{\ks}$;
\item $S$ is an intertwining operator between the canonical  representation $\rep_\mu$ of $G$ as isometric lattice automorphisms of $\Ell^p(X,\mu)$ and the representation $\left(\int_{\ERG}^{\oplus}\!\rep_{\lambda}\,\ud\nu(\lambda)\right)_\ks$ on $\left(\int_{\ERG}^{\oplus}\Ell^{p}(\ts,\lambda)\,\ud\nu(\lambda)\right)_{\ks}$, which is the $\kstext$-direct integral of the family $\{\rep_\lambda\}_{\lambda\in\ERG}$ of canonical representations of $G$ as isometric lattice automorphisms on the Banach lattices $\Ell^p(X,\lambda)$ \textup{(}$\lambda\in\ERG$\textup{)}. That is, for each $g\in G$, the following diagram commutes:
\begin{align*}
 \xymatrixcolsep{3.5pc}\xymatrix{
 \Ellp(\ts,\mu) \ar[r]^{\rep_{\mu}(g)} \ar[d]_{\isom} &   \Ellp(\ts,\mu) \ar[d]_{\isom}\\ \left(\int_{\ERG}^{\oplus}\Ell^{p}(\ts,\lambda)\,\ud\nu(\lambda)\right)_{\ks}\quad \ar[r]^{\left(\int_{\ERG}^{\oplus}\!\rep_{\lambda}(g)\,\ud\nu(\lambda)\right)_\ks} & \quad \left(\int_{\ERG}^{\oplus}\Ell^{p}(\ts,\lambda)\,\ud\nu(\lambda)\right)_{\ks}
 }
\end{align*}
\item For all $\lambda\in\ERG$, the representation $\rep_\lambda$ of $G$ on the fiber $\Ell^p(X,\lambda)$ is order indecomposable.
\end{enumerate}
\end{theorem}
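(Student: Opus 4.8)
The plan is to assemble the four assertions from the machinery built up in the preceding discussion, since the substance of parts~(1)--(3) has in fact already been established there; the remaining task is to organize these results cleanly and to supply the order-theoretic conclusion of part~(4) from Proposition~\ref{prop:characterization_band_irreducibility}. I would address the four parts in turn.

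For part~(1), I would first note that each ${\norm{\,\cdot\,}}_\lambda$ is a lattice semi-norm on $\vs$, being the restriction of the $\La^p(\ts,\lambda)$-seminorm, and that the required measurability of $\lambda\mapsto{\norm{f}}_\lambda$ for each fixed $f\in\vs$ is precisely Corollary~\ref{cor:disintegration_p-norm} applied to the simple function $f$, which lies in $\La^p(\ts,\lambda)$ for \emph{every} $\lambda\in\ERG$. Because the simple functions are dense in each $\Ell^p(\ts,\lambda)$, the completion of $\vs/\ker{\norm{\,\cdot\,}}_\lambda$ is $\Ell^p(\ts,\lambda)$, so that $\{\Ell^p(\ts,\lambda)\}_{\lambda\in\ERG}$ is a measurable family of Banach lattices over $(\ERG,\nu,\vs)$ in the sense of Section~\ref{subsec:direct_integrals_of_Banach_spaces}. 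The existence of the $\kstext$-direct integral and its being a Banach lattice then follow directly from Proposition~\ref{prop:direct_integral_is_complete}.

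For parts~(2) and~(3) I would invoke the preparatory work verbatim. Lemma~\ref{lem:section_is_measurable} shows that each $\sect_f$ is a measurable section; the well-definedness and linearity of $\isom$ follow from \eqref{eq:additivity}, \eqref{eq:homogeneity}, and the fact (Corollary~\ref{cor:disintegration_p-norm}) that the exceptional set in \eqref{eq:specific_section} is $\nu$-null; the isometry property is the norm identity \eqref{eq:disintegration_p-norm}; and the lattice-homomorphism property follows from the fiberwise identity $\sect_{|f|}(\lambda)=\abs{\sect_f(\lambda)}$. Surjectivity I would reduce, through the density statement in Lemma~\ref{lem:simple_sections_lie_dense}, to hitting the class of each simple section $\lambda\mapsto[\ind_A(\lambda)f]_\lambda$, and then produce the preimage $f^\prime:=\ind_{\beta^{-1}(A)}f$, using part~\ref{thm:measure_disintegration_2} of Theorem~\ref{thm:measure_disintegration} to identify $\sect_{f^\prime}$ with the given section $\nu$-almost everywhere. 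For part~(3) I would exhibit the fibers as arising from a single `core' representation $\widetilde\rep$ on $\vs$, namely $(\widetilde\rep(g)f)(x):=f(g^{-1}x)$; since each ${\norm{\,\cdot\,}}_\lambda$ is $\widetilde\rep$-invariant, this core representation is pointwise essentially bounded, so the formalism of Section~\ref{subsec:direct_integrals_of_representations} yields the decomposable representation $\left(\int_\ERG^\oplus\rep_\lambda\,\ud\nu(\lambda)\right)_\ks$ whose fibers are the canonical representations, and the intertwining identity $\isom\circ\rep_\mu(g)=\rep_p(g)\circ\isom$ is then checked on simple functions by unwinding the definitions and extended to all of $\Ell^p(\ts,\mu)$ by density.

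Part~(4) is the shortest: each $\lambda\in\ERG$ is by definition an ergodic $G$-invariant Borel probability measure, hence is finite and in particular $\sigma$-finite, so Proposition~\ref{prop:characterization_band_irreducibility}\eqref{prop:characterization_band_irreducibility_4} applies and yields order indecomposability of $\rep_\lambda$ exactly because $\lambda$ is ergodic. The hard part throughout is the surjectivity of $\isom$ in part~(2): it is the only step that genuinely uses the fine structure of the decomposition map---specifically the fiber concentration $\lambda(\beta^{-1}(\{\lambda\}))=1$---rather than merely the factorization of the integral, and the choice of the representative $f^\prime$ supported on the saturated set $\beta^{-1}(A)$ is what makes it work. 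A secondary subtlety worth flagging is that the completeness underlying part~(1) rests on the non-standard argument of Proposition~\ref{prop:direct_integral_is_complete}, which is needed precisely because $\nu$ need not be a complete measure.
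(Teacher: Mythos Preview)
Your proposal is correct and follows essentially the same approach as the paper: parts~(1)--(3) are assembled from the results established in the discussion immediately preceding the theorem (Corollary~\ref{cor:disintegration_p-norm}, Lemma~\ref{lem:section_is_measurable}, the surjectivity argument via $f^\prime=\ind_{\beta^{-1}(A)}f$ and part~\ref{thm:measure_disintegration_2} of Theorem~\ref{thm:measure_disintegration}, and the core-representation formalism of Section~\ref{subsec:direct_integrals_of_representations}), while part~(4) is deduced from Proposition~\ref{prop:characterization_band_irreducibility}\eqref{prop:characterization_band_irreducibility_4}. One harmless discrepancy: in the surjectivity step the paper actually verifies $\sect_{f^\prime}=\sect$ for \emph{all} $\lambda\in\ERG$, not merely $\nu$-almost everywhere, but your weaker claim already suffices for the conclusion at the level of equivalence classes.
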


\begin{remark}\label{rem:strong_continuity_of_spaces}\quad
The strong continuity of the $\kstext$-direct integral of representations was briefly addressed in Section~\ref{subsec:group_actions}. Although strong continuity played no role in the proofs, let us still mention that in the present context this is automatic: according to Corollary~\ref{cor:automatic_strong_continuity_Polish_pair}, $\rep_\mu$
(and hence $\left(\int_\ERG^\oplus \rep_\lambda\,\ud\nu(\lambda)\right)_\ks$) and all $\rep_\lambda$ ($\lambda\in\ERG$) are strongly continuous representations.
\end{remark}

\subsection{Worked example}\label{subsec:worked_example}

We conclude this section with a simple example of a representation that we disintegrate explicitly.

Let $\Di:=\left\{z\in\C :  \abs{z}\leq 1\right\}$ and let $\T:=\left\{z\in\C : \abs{z}=1\right\}$. Then $(\T,\Di)$ is a Polish topological dynamical system with compact group when supplied with the rotation action: $(z_{1},z_{2})\mapsto z_{1}z_{2}$ ($z_{1}\in\T$, $z_{2}\in\Di$). We let $\mu$ be the normalized restriction of the Lebesgue measure on $\R^2$ to the Borel $\sigma$-algebra of $\Di$. Then $\mu$ is a $\T$-invariant Borel probability measure on $\ts$. We fix $1\leq p<\infty$. Our aim is to exhibit an explicit disintegration of $\Ell^p(\Di,\mu)$ and the representation of $\rep_\mu$ of $\T$ on this space, as provided in abstracto by Theorem~\ref{thm:disintegrating_space_actions}.

The first step is to determine the set $\ERG$ of ergodic Borel probability measures on $\Di$. We know from part 3 of Remark~\ref{rem:measure_disintegration} that these measures   are in one-to-one correspondence with the orbits of $\T$, i.e.\ with the elements of the interval $[0,1]$ that parameterizes the radius of the orbits. From \eqref{eq:ergodic_orbit_measure_formula}  we infer an explicit formula for the ergodic Borel probability measure $\lambda_r$ corresponding to an orbit of radius $r\in[0,1]$, namely
\begin{align}\label{eq:orbit_measure_formula_disc}
\lambda_{r}(Y)=\frac{1}{2\pi}\int_{[0,2\pi]}\ind_{Y}(r\ue^{\ui\theta})\,\ud\theta,
\end{align}
where $Y$ is a Borel subset of $\Di$. More generally, if $f:\Di\to\R$ is a bounded Borel measurable function, then \eqref{eq:ergodic_integral_formula} gives, for $r\in [0,1]$,
\begin{equation}\label{eq:orbit_integral_formula_disc}
\int_\Di f(z)\,\ud\lambda_r(z)=\frac{1}{2\pi}\int_{[0,2\pi]}f(r\ue^{\ui\theta})\,\ud\theta.
\end{equation}

The second step is to determine $\ERG$ as a topological space; recall that $\ERG$ is endowed with the weak$^{*}$-topology via the inclusion $\ERG\subseteq \Ce_{\mathrm{b}}(\Di)^{*}$. We know that $\ph:[0,1]\rightarrow \ERG$, given by $\ph(r)=\lambda_{r}$, is a bijection; we claim that it is even a homeomorphism. To see this, let $(r_{n})_{n\in\N}\subseteq[0,1]$ and let $r_{n}\rightarrow r\in[0,1]$ as $n\rightarrow \infty$. If $f\in\Ce_{\mathrm{b}}(\Di)$, then, using \eqref{eq:orbit_integral_formula_disc} and the dominated convergence theorem, we see that
\begin{align*}
\int_{\Di}f(z)\,\ud\lambda_{r_{n}}(z)=\frac{1}{2\pi}\int_{[0,2\pi]}f(r_{n}\ue^{\ui\theta})\,\ud\theta\rightarrow \frac{1}{2\pi}\int_{[0,2\pi]} f(r\ue^{\ui\theta})\,\ud\theta=\int_{\Di}f(z)\,\ud\lambda_{r}(z)
\end{align*}
as $n\to\infty$. Hence $\ph$ is continuous. Since $[0,1]$ is compact and $\ERG$ is Hausdorff, we conclude that $\ph$ is a homeomorphism.

The third step is to find a decomposition map $\beta:\Di\to\ERG$. In this case (as for more general actions of compact groups, where the ergodic Borel probability measures are supported on orbits), this map is already uniquely determined by parts~\ref{thm:measure_disintegration_1} and~\ref{thm:measure_disintegration_2} of Theorem~\ref{thm:measure_disintegration}.  Indeed, let $r\in[0,1]$. Then part~\ref{thm:measure_disintegration_2} shows that $\beta^{-1}(\{\lambda_r\})$ cannot be disjoint from the orbit $\T r$ (which is the support of $\lambda_r$), and subsequently part~\ref{thm:measure_disintegration_1} implies that this set contains the entire orbit. Since  $\beta^{-1}(\{\lambda_{r_1}\})$ and $\beta^{-1}(\{\lambda_{r_2}\})$ are obviously disjoint for $r_1\neq r_2$, we must have $\beta^{-1}(\{\lambda_r\}=\T r$ for all $r\in[0,1]$. We conclude that $\beta_{r\ue^{i\theta}}=\lambda_{r}$ for $r\in[0,1]$ and $\theta\in\R$ (so that $\beta$ is, in fact, uniquely determined here).

We know a priori from Theorem~\ref{thm:measure_disintegration} that $\beta$ is Borel measurable, but this can also be seen directly. In fact, $\beta$ is even continuous, because $\ph^{-1}\circ\beta:\Di\rightarrow [0,1]$ is continuous (it maps $r\ue^{\ui\theta}$ to $r$), and hence so is $\beta=\ph\circ(\ph^{-1}\circ\beta)$.

We also know a priori that part~\ref{thm:measure_disintegration_3} of Theorem~\ref{thm:measure_disintegration} is satisfied for our $\mu$, but using \eqref{eq:orbit_measure_formula_disc} this can also be seen directly.  Indeed, using polar coordinates we have, for a Borel subset $Y$ of $\Di$,
\begin{align*}
\mu(Y)&=\frac{1}{\pi}\int_{[0,1]}\int_{[0,2\pi]} r\mathbf{1}_{Y}(r\ue^{\ui\theta})\,\ud\theta\,\ud r=2\int_{[0,1]}r\lambda_{r}(Y)\,\ud r\\
&=\frac{1}{\pi}\int_{[0,1]}\int_{[0,2\pi]}r\beta_{r\ue^{\ui\theta}}(Y)\,\ud \theta\,\ud r=\int_{\Di}\beta_{z}(Y)\,\ud\mu(z).
\end{align*}

Theorem~\ref{thm:disintegrating_space_actions} gives a disintegration of the action of $\T$ on $\Ell^p(\Di,\mu)$ as an $\kstext$-direct integral of representations with $\ERG$ as underlying point set,  but it is more intuitive to formulate this with $[0,1]$, which is homeomorphic to $\ERG$, as underlying point set. Therefore, we let $\nu$ be the push-forward measure of $\mu$ via $\ph^{-1}\circ\beta\colon\Di\to [0,1]$. Thus, if $A$ is a Borel subset of $[0,1]$, then
$\nu(A)=\mu(\beta^{-1}\circ\ph(A))=\mu(\{z\in\C : |z|\in A\})$. Using polar coordinates, we see that
\[
\nu(A)=\mu(\{r\ue^{\ui\theta}: r\in A\})=\frac{1}{\pi}\int_{[0,1]}\int_{[0,2\pi]}r\ind_A(|r\ue^{\ui\theta}|)\,\ud\theta\,\ud r=\int_{[0,1]}\ind_A\cdot 2r\,\ud r.
\]
We conclude that $\nu$ is the measure $2r\,\ud r$ on the Borel subsets of $[0,1]$. For (say) a bounded Borel measurable function $f$ on $\Di$, part~\ref{thm:fubini} of the factorization Theorem~\ref{thm:factorization} then takes the form
\begin{equation}\label{eq:fubini_disc}
\frac{1}{\pi}\int_\Di f(z)\,\ud\mu(z)=\int_{[0,1]}\left(\frac{1}{2\pi}\int_{[0,2\pi]} f(r\ue^{\ui\theta})\,\ud\theta)\right)\,2r\,\ud r,
\end{equation}
where \eqref{eq:orbit_integral_formula_disc} has been used. The validity of this formula in itself is, of course, clear; the point is its interpretation as an instance of the factorization in Theorem~\ref{thm:factorization}.

Let $\vs$ be the vector lattice of simple functions on $\Di$. According to Theorem~\ref{thm:disintegrating_space_actions}, ${\{{\norm{\,\cdot\,}}_{\lambda_r}\}}_{r\in [0,1]}$ is a measurable family of semi-norms on $V$, so that ${\{\Ell^{p}(\Di,\lambda_r)\}}_{r\in[0,1]}$ is a measurable family of Banach lattices over $([0,1],\nu,\vs)$, and the $\kstext$-direct integral $\left(\int_{[0,1]}^{\oplus}\Ell^{p}(\Di,\lambda_r)\,2r\ud r\right)_\ks$ can be defined. Let $\isom:\Ell^{p}(\Di,\mu)\to \left(\int_{[0,1]}^{\oplus}\Ell^{p}(\Di,\lambda_{r})\,2r\,\ud r\right)_{\ts}$ be such that $\isom([f]_{\mu}):=[\sect_{f}]_{2r\ud r}$, where $\sect_{f}(r)=[f]_{\lambda_{r}}$ if $f\in\La^p(\Di,\lambda_r)$, and $\sect_{f}(r)=[0]_{\lambda_{r}}$ if $f\notin\La^p(\Di,\lambda_r)$. The latter  exceptional set is Borel measurable and has $2r\,\ud r$-measure zero. Equivalently, it has $\ud r$-measure zero (likewise, we could have written $[s_f]_{\ud r}$ for  $[s_f]_{2r\ud r}$). Then, according to Theorem~\ref{thm:disintegrating_space_actions}, $\isom$ is a well-defined isometric lattice isomorphism between $\Ell^{p}(\Di,\mu)$ and $\left(\int_{[0,1]}^{\oplus}\Ell^{p}(\Di,\lambda_{r})\,2r\,\ud r\right)_{\ks}$. If (for the ease of formulation) $f$ is a bounded Borel measurable function on $\Di$, then the exceptional set is empty, and, using \eqref{eq:orbit_integral_formula_disc}, the isometric nature of $S$ at the point $[f]_\mu\in\Ell^p(\ts,\mu)$ is an application of \eqref{eq:fubini_disc} to ${|f|}^p$:
\begin{align*}
{\norm{[f]}}^p_\mu &=\frac{1}{\pi}\int_{\Di}{|f(z)|}^p\,\ud\mu(z)=\int_{[0,1]}\left(\frac{1}{2\pi}\int_{[0,2\pi]} {|f(r\ue^{\ui\theta})|}^p\,\ud\theta)\right)\,2r\,\ud r\\
&=\int_{[0,1]}\left(\int_{\Di}{|f(z)|}^p\,\ud\lambda_r(z)\right)\,2r\,dr=\int_{[0,1]} {\norm{s_f(r)}}^p_{\lambda_r}\,2r\,\ud r\\
&={\norm{S([f]_\mu)}}^p_p.
\end{align*}
Furthermore, $S$ is an intertwining operator between the canonical representation $\rep_{\mu}$ of $\T$ on $\Ell^{p}(\Di,\mu)$ and the $\kstext$-direct integral $\left(\int_{[0,1]}^{\oplus}\rep_{\lambda_r}\,2r\,\ud r \right)_\ks$  of the order indecomposable representations $\rep_{\lambda_r}$ of $\T$ on the spaces $\Ell^p(\Di,\lambda_r)$. That is,  for each $z\in\T$, the diagram
\begin{align}\label{eq:diagram_disc}
 \xymatrixcolsep{4pc}\xymatrix{
 \Ellp(\Di,\mu) \ar[r]^{\rep_{\mu}(z)} \ar[d]_{\isom} &   \Ellp(\Di,\mu) \ar[d]_{\isom}\\ \left(\int_{[0,1]}^{\oplus}\Ell^{p}(\Di,\lambda_{r})\,2r\,\ud r\right)_{\ks}\quad \ar[r]^{\left(\int_{[0,1]}^{\oplus}\!\rep_{\lambda_{r}}(z)\,2r\,\ud r\right)_\ks} & \quad \left(\int_{[0,1]}^{\oplus}\Ell^{p}(\Di,\lambda_{r})\,2r\,\ud r\right)_{\ks}
 }
\end{align}
is commutative.

Intuitively, this is certainly plausible, since `restricting a function to an orbit' is clearly a $\T$-equivariant operation, and the commutativity of diagram~\eqref{eq:diagram_disc} merely reflects that this is what the operator $S$ tries to do. We write `tries to do', and not `does', because `restricting'  is meaningless for the elements of the actual domain of $S$, which are $\mu$-equivalence classes of measurable functions.  The `actual' intertwining statement in Theorem~\ref{thm:disintegrating_space_actions} is, therefore, that this intuitive observation can be modified into a form that is meaningful and that survives during the measure-theoretical constructions. In this case, it comes down to the following.

If there is  an empty exceptional set in the definition of $s_f$ for $f\in\mathcal L^p(\Di,\mu)$ (e.g.\ if $f$ is a bounded Borel measurable function), then, for each fixed $r\in [0,1]$, the value $s_f(r)=[f]_{\lambda_r}$ is clearly determined by the restriction of $f$ to the corresponding orbit of radius $r$. Since the characteristic function of this orbit is $\mu$-almost everywhere zero, it is likewise clear that $[f]_{\lambda_r}$ \emph{always} depends on the choice of the representative $f$ of $[f]_\mu$. Nevertheless, the $2r\ud r$-equivalence class of the section $r\mapsto s_f(r)=[f]_{\lambda_r}$ does \emph{not} depend on this choice. Moreover,  the map $S$ sending $[f]_{\mu}$ to this $2r\ud r$-equivalence class is (clearly) $\T$-equivariant.

Furthermore, this can still be made to work  when there \emph{is} a non-empty exceptional set in the definition of $s_f$; i.e.\ when $p$-integrability of $f$ is lost when $f$ is restricted to certain orbits. For each fixed orbit, there are evidently $f\in\mathcal L^p(\ts,\mu)$ for which this is the case, but for each fixed $f\in\mathcal L^p(\ts,\mu)$ there are $2r\ud r$-almost none of such orbits.

\section{Disintegration: general case}\label{sec:disintegrating_Markov_actions}

In Section~\ref{sec:disintegrating_space_actions}, we started with a topological dynamical system $(G,K)$ and a $G$-invariant Borel probability measure on $\ts$. In that context, there existed canonically associated strongly continuous representations of $G$ as isometric lattice automorphisms of the spaces $\Ell^p(\ts,\mu)$ ($1\leq p<\infty$) that fix the constants.

In the current section, we turn the tables.  We start with an (at first) abstract group $G$ and a probability space $(\ts,\mu)$, and we assume that, for some $1\leq p<\infty$, $G$ acts as isometric lattice automorphisms of $\Ell^p(\ts,\mu)$ such that the constants are left fixed. It is then established that, in fact, $G$ acts naturally in a similar manner on $\Ell^p(\ts,\mu)$ for \emph{all} $1\leq p<\infty$; see~Corollary~\ref{cor:family}. Furthermore, if $G$ is a locally compact Hausdorff group and the original representation is strongly continuous, then it is shown that there is an isomorphic model in which this $G$-action on all $\Ell^p$-spaces originates canonically from a measure preserving continuous $G$-action on a compact Hausdorff space; see Theorem~\ref{thm:transfer}. Under mild additional assumptions, we can then conclude from our disintegration Theorem~\ref{thm:disintegrating_space_actions} that, even though there was originally no action of $G$ on an underlying point set, the original representation(s) of $G$ on $\Ellp(\ts,\mu)$ can still be disintegrated into order indecomposable representations as an $\kstext$-direct integral. This leads to Theorem~\ref{thm:disintegrating_Markov_actions}, which is an ordered relative of the general unitary disintegration in \cite[Theorem~18.7.6]{Dixmier77}

\begin{remark}
It follows from the combination of \cite[Vol. I, Exercise~1.12.102]{Bogachev07},  \cite[Vol. II, Example~6.5.2]{Bogachev07}, and \cite[Vol. I, Exercise~4.7.63]{Bogachev07} that, for $1\leq p<\infty$, $\Ell^p(\ts,\mu)$ is always separable whenever $\ts$ is a separable metric space and $\mu$ is a Borel probability measure on $\ts$. Therefore, the representation spaces in Section~\ref{sec:disintegrating_space_actions} are all separable.  Furthermore, we have observed in Remark~\ref{rem:strong_continuity_of_spaces} that the representations on the relevant spaces in Section~\ref{sec:disintegrating_space_actions} are all strongly continuous.  Neither of these properties has played a role in the proofs so far. Quite to the contrary, in the current section both properties will be essential in order to be able to exhibit a model in Theorem~\ref{thm:transfer} to which the disintegration Theorem~\ref{thm:disintegrating_Markov_actions} can subsequently be applied.
\end{remark}

\begin{remark}
With the exception of Remark~\ref{rem:lamperti}, the combination of ideas, arguments and results in Lemma~\ref{lem:powers_preserved} up to and including Theorem~\ref{thm:transfer} is rather similar to that in \cite{EiFaHaNa15}. Unfortunately, we cannot directly apply results such as \cite[Proposition~13.6 and Theorem~13.9]{EiFaHaNa15}. The reason is that the so-called Markov operators on $\Ell^p(\ts,\mu)$ that are considered in \cite{EiFaHaNa15} are positive operators $\lh$ that fix the constants and satisfy $\int_\ts f\,\ud\mu=\int_\ts f\,\ud\mu$ for all $f\in\Ell^p(\ts,\mu)$. Our point of departure, where $\lh$ preserves the norm rather than the integral, and is a lattice homomorphism rather than merely a positive operator, is different. This necessitates an independent, albeit similar, development; see also Remark~\ref{rem:markov_typering}.
\end{remark}

We begin by showing that representations of an abstract group $G$ as isometric lattice automorphisms that fix the constants come in families.

There will be only one measure in this section, and we happily resort to the usual practice of ignoring the distinction between equivalence classes of functions and their representatives.

We start with the following key observation, see \cite[Theorem~7.23.vi]{EiFaHaNa15}.

\begin{lemma}\label{lem:powers_preserved}
Let $(\ts,\mu)$ be a probability space, and let $\lh\colon \Ell^\infty(\ts,\mu)\to \Ell^\infty(\ts,\mu)$ be a lattice homomorphism that fixes the constants. Then $\lh({|f|}^p)={|\lh(f)|}^p$ for all $f\in \Ell^\infty(\ts,\mu)$ and all $1\leq p<\infty$.
\end{lemma}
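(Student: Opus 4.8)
The plan is to exploit the fact that a lattice homomorphism fixing the constants is, on $\Ell^\infty(\ts,\mu)$, an algebra homomorphism for the pointwise product, at least on the positive cone, and then use a density/continuity argument to pass from polynomials to arbitrary powers. More precisely, I would first reduce to the case $0\leq f\leq 1$: any $f\in\Ell^\infty(\ts,\mu)$ can be rescaled by a positive constant (which $\lh$ respects, being linear and fixing constants) and, since $\lh({|f|}^p)={|\lh|f|\,|}^p$ would follow once we know it for nonnegative functions and $\lh(|f|)=|\lh(f)|$, it suffices to treat $f$ with $0\leq f\leq 1$.

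The key step is to show that $\lh(f^2)=\lh(f)^2$ for such $f$, equivalently that $\lh$ preserves products of bounded functions. The standard trick: for $0\leq f\leq 1$ one has the identity $f\wedge(1-f) = \tfrac12\bigl(1-|2f-1|\bigr)$, and iterating expressions of this sort lets one recover $f^2$ from lattice operations and the constants; alternatively, and more cleanly, I would invoke that $f\mapsto f^2$ on $[0,1]$ is a uniform limit of functions built from $\min$, $\max$, scalar multiples and constants (a Bernstein-polynomial or piecewise-linear approximation of $t\mapsto t^2$), so that $\lh(f^2)$ is the $\Ell^\infty$-limit of $\lh$ applied to such lattice-polynomial expressions in $f$, which $\lh$ transforms into the same expressions in $\lh(f)$, whose limit is $\lh(f)^2$. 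Here one uses that $\lh$ is automatically $\norm{\,\cdot\,}_\infty$-bounded — indeed contractive — because a lattice homomorphism fixing $\ind$ satisfies $|\lh(g)|=\lh(|g|)\leq\lh(\norm{g}_\infty\ind)=\norm{g}_\infty\ind$. Once $\lh(f^2)=\lh(f)^2$ is known, induction gives $\lh(f^n)=\lh(f)^n$ for all $n\in\N$, hence $\lh(q(f))=q(\lh(f))$ for every polynomial $q$ with $q(0)=0$, and then for every $q$ by adding constants.

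Finally, to reach arbitrary real exponents $p\geq 1$: for $0\leq f\leq 1$, approximate $t\mapsto t^p$ uniformly on $[0,1]$ by polynomials $q_k$ (Weierstrass), note $q_k(f)\to f^p$ in $\Ell^\infty(\ts,\mu)$, apply the contraction $\lh$ to get $\lh(q_k(f))\to\lh(f^p)$, and use the polynomial case together with $\lh(f)\in[0,\norm{f}_\infty\ind]$ so that $q_k(\lh(f))\to\lh(f)^p$ in $\Ell^\infty(\ts,\mu)$ as well; comparing limits yields $\lh(f^p)=\lh(f)^p$. For general $f$ one first replaces $f$ by $|f|$, using $\lh(|f|)=|\lh(f)|$, and rescales; this gives $\lh({|f|}^p)={|\lh(f)|}^p$ as claimed.

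The main obstacle is the multiplicativity step $\lh(f^2)=\lh(f)^2$: establishing that a lattice homomorphism fixing the constants is multiplicative on $\Ell^\infty$ is the substantive content, and one must be careful that the uniform approximation of $t\mapsto t^2$ (or $t\mapsto t^p$) really is achievable by expressions that $\lh$ manifestly respects, i.e. finite combinations of $\vee$, $\wedge$, scalars and $\ind$, or else that polynomial approximation suffices once multiplicativity is in hand — so the cleanest route is to first prove multiplicativity via the lattice identity for $f\wedge(1-f)$ (or via the formula $fg=\tfrac14\bigl((f+g)^2-(f-g)^2\bigr)$ reducing everything to squares, and squares to lattice operations on $[0,1]$), and only afterwards use Weierstrass for the non-integer exponents. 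Everything else is routine, given that $\lh$ is an $\Ell^\infty$-contraction.
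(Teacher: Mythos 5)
Your argument is correct, but it is worth noting that the paper does not prove this lemma at all: it simply cites \cite[Theorem~7.23.vi]{EFHN}, where the statement is part of the standard dictionary between lattice homomorphisms fixing the constants and algebra homomorphisms on $\Ell^\infty$-type spaces. What you supply is a self-contained proof, and the route you settle on in your final paragraph is the right one: reduce to $0\leq f\leq\ind_\ts$ via $\lh(|f|)=|\lh(f)|$ and homogeneity, observe that $\lh$ is an $\norm{\,\cdot\,}_\infty$-contraction (your inequality $|\lh(g)|=\lh(|g|)\leq\lh(\norm{g}_\infty\ind_\ts)=\norm{g}_\infty\ind_\ts$ is exactly right), and then use that every continuous function on $[0,1]$\textemdash in particular $t\mapsto t^p$\textemdash is a uniform limit of piecewise linear functions, each of which is a finite $\vee/\wedge$-combination of affine functions and hence is manifestly intertwined by $\lh$. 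With that in hand you can in fact apply the approximation directly to $t\mapsto t^p$ and dispense with the intermediate multiplicativity step, the passage through $\lh(f^2)=\lh(f)^2$, the induction on $n$, and the second (Weierstrass) approximation; these are harmless but redundant. One genuine caution: the parenthetical suggestion to use a \emph{Bernstein-polynomial} approximation of $t\mapsto t^2$ does not work as stated, since evaluating a polynomial at $f$ already requires the products $f^n$ whose behaviour under $\lh$ is precisely what is at issue; only the piecewise-linear (lattice) approximation avoids this circularity, and you correctly flag this concern yourself. Compared with the paper's citation, your proof makes the mechanism explicit at the cost of about a page; the citation buys brevity and also the surrounding equivalences of \cite[Theorem~7.23]{EFHN} that the paper reuses later (e.g.\ in the proof of Theorem~\ref{thm:transfer}).
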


\begin{lemma}\label{lem:basic_isometry}
Let $(\ts,\mu)$ be a probability space, and let $\lh\colon \Ell^\infty(\ts,\mu)\to \Ell^\infty(\ts,\mu)$ be a lattice homomorphism that fixes the constants. Then the following are equivalent:
\begin{enumerate}
\item\label{lem:basic_isometry_1} $\int_\ts \lh (f)\,\ud\mu =\int_\ts f\,\ud\mu$ for all $f\in \Ell^\infty(\ts,\mu)$;
\item\label{lem:basic_isometry_2} ${\norm{\lh (f)}}_1={\norm{f}}_1$ for all $f\in \Ell^\infty(\ts,\mu)$;
\item\label{lem:basic_isometry_3} There exists $1\leq p<\infty$ such that ${\norm{\lh (f)}}_p={\norm{f}}_p$ for all $f\in \Ell^\infty(\ts,\mu)$;
\item\label{lem:basic_isometry_4} For all $1\leq p<\infty$,  we have ${\norm{\lh (f)}}_p={\norm{f}}_p$ for all $f\in \Ell^\infty(\ts,\mu)$.
\end{enumerate}
\end{lemma}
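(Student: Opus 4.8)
The plan is to prove the equivalences in a cycle, using Lemma~\ref{lem:powers_preserved} as the engine that converts the $p$-norm of $\lh(f)$ into the $1$-norm of $\lh$ applied to $|f|^p$. First I would observe that $T$ extends naturally to all of $\Ell^\infty(\ts,\mu)$ (it is given there) and that, since $\mu$ is a probability measure, $\Ell^\infty(\ts,\mu)\subseteq\Ell^p(\ts,\mu)$ for every $1\leq p<\infty$, so all the norms appearing make sense. The cleanest route is to show \eqref{lem:basic_isometry_1}$\Leftrightarrow$\eqref{lem:basic_isometry_2}, then \eqref{lem:basic_isometry_2}$\Rightarrow$\eqref{lem:basic_isometry_4}$\Rightarrow$\eqref{lem:basic_isometry_3}$\Rightarrow$\eqref{lem:basic_isometry_2}, with the heart of the matter lying in the step from the $1$-norm to general $p$-norms via the power-preservation lemma.

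For \eqref{lem:basic_isometry_1}$\Leftrightarrow$\eqref{lem:basic_isometry_2}: if \eqref{lem:basic_isometry_1} holds, then for $f\in\Ell^\infty(\ts,\mu)$ we have, using that $T$ is a lattice homomorphism so $T(|f|)=|T(f)|$,
\[
{\norm{T(f)}}_1=\int_\ts |T(f)|\,\ud\mu=\int_\ts T(|f|)\,\ud\mu=\int_\ts |f|\,\ud\mu={\norm{f}}_1.
\]
Conversely, if \eqref{lem:basic_isometry_2} holds, then for $f\in\Ell^\infty(\ts,\mu)$ write $f=f^+-f^-$; since $T$ is positive and fixes the constants, $T(f^\pm)\geq 0$, and $\int_\ts T(f^\pm)\,\ud\mu={\norm{T(f^\pm)}}_1={\norm{f^\pm}}_1=\int_\ts f^\pm\,\ud\mu$, and subtracting gives \eqref{lem:basic_isometry_1} by linearity of $T$ and of the integral.

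For \eqref{lem:basic_isometry_2}$\Rightarrow$\eqref{lem:basic_isometry_4}: fix $1\leq p<\infty$ and $f\in\Ell^\infty(\ts,\mu)$. By Lemma~\ref{lem:powers_preserved}, $T(|f|^p)=|T(f)|^p$, and $|f|^p\in\Ell^\infty(\ts,\mu)$ since $f$ is bounded. Therefore
\[
{\norm{T(f)}}_p^p=\int_\ts |T(f)|^p\,\ud\mu=\int_\ts T(|f|^p)\,\ud\mu={\norm{T(|f|^p)}}_1={\norm{|f|^p}}_1=\int_\ts |f|^p\,\ud\mu={\norm{f}}_p^p,
\]
where the fourth equality uses \eqref{lem:basic_isometry_2} applied to the bounded function $|f|^p$. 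Taking $p$-th roots gives \eqref{lem:basic_isometry_4}. The implication \eqref{lem:basic_isometry_4}$\Rightarrow$\eqref{lem:basic_isometry_3} is trivial, and \eqref{lem:basic_isometry_3}$\Rightarrow$\eqref{lem:basic_isometry_2} runs exactly as the step just given, but in reverse: from ${\norm{T(g)}}_p={\norm{g}}_p$ for all $g\in\Ell^\infty$, apply this to $g$ with $|g|^{p}$ replaced appropriately — more precisely, for an arbitrary nonnegative $h\in\Ell^\infty(\ts,\mu)$ set $g:=h^{1/p}\in\Ell^\infty(\ts,\mu)$, so that $|g|^p=h$ and, by Lemma~\ref{lem:powers_preserved}, $T(h)=T(|g|^p)=|T(g)|^p$, whence ${\norm{T(h)}}_1=\int_\ts |T(g)|^p\,\ud\mu={\norm{T(g)}}_p^p={\norm{g}}_p^p=\int_\ts h\,\ud\mu={\norm{h}}_1$; then split a general $f\in\Ell^\infty$ as $f^+-f^-$ and use positivity of $T$ and the triangle-free identity ${\norm{T(f)}}_1={\norm{T(f^+)}}_1+{\norm{T(f^-)}}_1$ (valid because $T(f^+)$ and $T(f^-)$ are disjoint nonnegative elements, $T$ being a lattice homomorphism) to conclude ${\norm{T(f)}}_1={\norm{f^+}}_1+{\norm{f^-}}_1={\norm{f}}_1$.

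I do not expect a serious obstacle here; the only point requiring a little care is the bookkeeping in the reverse direction \eqref{lem:basic_isometry_3}$\Rightarrow$\eqref{lem:basic_isometry_2}, where one must pass from an isometry statement about $p$-norms to one about $1$-norms — the trick of substituting $g=h^{1/p}$ and invoking Lemma~\ref{lem:powers_preserved} to recover $h$ as $|g|^p$ is what makes this work, together with the disjointness of $T(f^+)$ and $T(f^-)$ to handle signed functions. Everything else is a direct computation once one remembers that boundedness of $f$ guarantees $|f|^p$ and $f^{1/p}$ stay in $\Ell^\infty(\ts,\mu)$, so that all invocations of the hypotheses are legitimate.
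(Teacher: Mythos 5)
Your proposal is correct and follows essentially the same route as the paper: the engine in both is Lemma~\ref{lem:powers_preserved}, used to pass between $p$-norms and integrals of $p$-th powers, with the substitution $g=h^{1/p}$ handling \eqref{lem:basic_isometry_3}$\Rightarrow$\eqref{lem:basic_isometry_2} and the $f=f^+-f^-$ decomposition handling the signed case. The only cosmetic differences are the ordering of the cycle and your use of disjointness of $T(f^+)$ and $T(f^-)$ where the paper simply invokes $|T(f)|=T(|f|)$; both are valid.
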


\begin{proof}
To see that \eqref{lem:basic_isometry_1} implies \eqref{lem:basic_isometry_4}, we use Lemma~\ref{lem:powers_preserved} to note that ${\norm{\lh(f)}}_p^p=\int_\ts {|\lh(f)|}^p\,\ud\mu=\int_\ts \lh({|f|}^p)\,\ud\mu=\int_\ts {|f|}^p\,\ud\mu={\norm{f}}_p^p$.

It is clear that \eqref{lem:basic_isometry_4} implies \eqref{lem:basic_isometry_3}.

To see that \eqref{lem:basic_isometry_3} implies \eqref{lem:basic_isometry_2}, we invoke Lemma~\ref{lem:powers_preserved} to compute as follows:
\begin{align*}
{\norm{\lh(f)}}_1&=\int_\ts |\lh(f)|\,\ud\mu = \int_\ts \lh(|f|)\,\ud\mu=\int _\ts\lh\left({\left|\left({|f|}^{1/p}\right)\right|}^p\right)\,\ud\mu\\
&=\int_\ts {\left|\lh\left({|f|}^{1/p}\right)\right|}^p\,\ud\mu={\norm{\lh\left({|f|}^{1/p}\right)}}_p^p={\norm{{|f|}^{1/p}}}_p^p\\
&={\norm{f}}_1.
\end{align*}

To see that \eqref{lem:basic_isometry_2} implies \eqref{lem:basic_isometry_1}, we note that the equality in \eqref{lem:basic_isometry_1} is just the one in \eqref{lem:basic_isometry_2} if $f\geq 0$ (note that $\lh f\geq 0$ then). For general $f$, we then have
\begin{align*}
\int_\ts \lh(f)\,\ud\mu &=\int_\ts (\lh(f))^+\,\ud\mu - \int_\ts (\lh(f))^-\,\ud\mu
=\int_\ts \lh(f^+)\,\ud\mu - \int_\ts \lh(f^-)\,\ud\mu\\
&=\int_\ts f^+ \,\ud\mu - \int_\ts f^-\,\ud\mu =\int_\ts f\,\ud\mu.
\end{align*}
\end{proof}

Fix $1\leq p<\infty$, and consider a lattice homomorphism $\lh\colon\Ell^p(\ts,\mu)\to\Ell^p(\ts,\mu)$ that leaves the constants fixed. Then $\lh$, being a positive operator on a Banach lattice, is continuous in the $p$-norm. Furthermore, $\lh$ leaves $\Ell^\infty(\ts,\mu)$ invariant. Indeed, if $f\in\Ell^\infty(\ts,\mu)$, then $|f|\leq {\norm{f}}_\infty\ind_\ts$ in the lattice $\Ell^p(\ts,\mu)$. An application of $\lh$ shows that $\lh(f)$ is in $\Ell^\infty(\ts)$ again, and also that $\lh\colon\Ell^\infty(\ts,\mu)\to\Ell^\infty(\ts,\mu)$ is contractive in the supremum-norm. For later use, let us note that the latter implies that a group of lattice automorphisms of $\Ell^p(X,\mu)$ that fixes the constants automatically acts on $\Ell^\infty(\ts)$ as isometric lattice automorphisms.

Using continuity and density arguments, the following result is now an easy consequence of Lemma~\ref{lem:basic_isometry}.

\begin{lemma}\label{lem:markov_typering}
Let $(X,\mu)$ be a probability space, let $1\leq p<\infty$, and let $\lh\colon\Ell^p(\ts,\mu)\to\Ell^p(\ts,\mu)$ be a lattice homomorphism that leaves $\ind_X$ fixed. Then $\lh$ leaves $\Ell^\infty(\ts,\mu)$ invariant, and the restriction of $\lh$ to $\Ell^\infty(\ts,\mu)$ is a contractive lattice homomorphism for the supremum-norm that leaves the constants fixed. Furthermore, the following are equivalent:
\begin{enumerate}
\item\label{lem:markov_typering_1} $\int_\ts \lh (f)\,\ud\mu =\int_\ts f\,\ud\mu$ for all $f\in \Ell^\infty(\ts,\mu)$;
\item\label{lem:markov_typering_2} $\int_\ts\lh (f)\,\ud\mu =\int_\ts f\,\ud\mu$ for all $f\in \Ell^p(\ts,\mu)$;
\item\label{lem:markov_typering_3} ${\norm{\lh (f)}}_p={\norm{f}}_p$ for all $f\in\Ell^p(\mu,\ts)$.
\end{enumerate}
\end{lemma}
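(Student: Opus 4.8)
\emph{Proof proposal.} The first assertion is immediate from the discussion preceding the lemma: there it is shown that $\lh$ maps $\Ell^\infty(\ts,\mu)$ into itself and is contractive for the supremum-norm, and since the lattice operations of $\Ell^\infty(\ts,\mu)$ are the restrictions of those of $\Ell^p(\ts,\mu)$ and $\lh\ind_\ts=\ind_\ts$, the restriction $\lh|_{\Ell^\infty(\ts,\mu)}$ is a contractive lattice homomorphism that fixes the constants. In particular, Lemma~\ref{lem:basic_isometry} is applicable to $\lh|_{\Ell^\infty(\ts,\mu)}$, and this is the tool that drives the equivalences. The plan for the equivalences is to transfer everything to $\Ell^\infty(\ts,\mu)$, where Lemma~\ref{lem:basic_isometry} is available, and then transport back to $\Ell^p(\ts,\mu)$ by density and continuity.

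To prepare, I would first record three routine facts. Since $\mu$ is a probability measure, $\Ell^p(\ts,\mu)\subseteq\Ell^1(\ts,\mu)$ with ${\norm{\,\cdot\,}}_1\leq{\norm{\,\cdot\,}}_p$, so $f\mapsto\int_\ts f\,\ud\mu$ is a bounded linear functional on $\Ell^p(\ts,\mu)$; $\lh$ is bounded on $\Ell^p(\ts,\mu)$, being a positive operator on a Banach lattice; and $\Ell^\infty(\ts,\mu)$ is dense in $\Ell^p(\ts,\mu)$, since for any $f\in\Ell^p(\ts,\mu)$ the truncations $f_n:=(f\wedge n\ind_\ts)\vee(-n\ind_\ts)$ lie in $\Ell^\infty(\ts,\mu)$ and converge to $f$ in $\Ell^p(\ts,\mu)$ by dominated convergence. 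Consequently $f\mapsto\int_\ts \lh(f)\,\ud\mu$ and $f\mapsto{\norm{\lh(f)}}_p$ are both continuous on $\Ell^p(\ts,\mu)$.

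With this in hand, \eqref{lem:markov_typering_2}$\Rightarrow$\eqref{lem:markov_typering_1} is trivial. For \eqref{lem:markov_typering_1}$\Rightarrow$\eqref{lem:markov_typering_2}, the continuous functionals $f\mapsto\int_\ts\lh(f)\,\ud\mu$ and $f\mapsto\int_\ts f\,\ud\mu$ on $\Ell^p(\ts,\mu)$ agree on the dense subspace $\Ell^\infty(\ts,\mu)$ by hypothesis, hence everywhere. For \eqref{lem:markov_typering_1}$\Rightarrow$\eqref{lem:markov_typering_3}, hypothesis \eqref{lem:markov_typering_1} is exactly part \eqref{lem:basic_isometry_1} of Lemma~\ref{lem:basic_isometry} applied to $\lh|_{\Ell^\infty(\ts,\mu)}$, whose part \eqref{lem:basic_isometry_4} then gives ${\norm{\lh(f)}}_p={\norm{f}}_p$ for all $f\in\Ell^\infty(\ts,\mu)$; for general $f\in\Ell^p(\ts,\mu)$ one approximates by the truncations $f_n$ and uses $\lh f_n\to\lh f$ in $\Ell^p(\ts,\mu)$ together with continuity of the norm to conclude ${\norm{\lh(f)}}_p=\lim_n{\norm{\lh(f_n)}}_p=\lim_n{\norm{f_n}}_p={\norm{f}}_p$. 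Finally, \eqref{lem:markov_typering_3}$\Rightarrow$\eqref{lem:markov_typering_1}: restricting \eqref{lem:markov_typering_3} to $f\in\Ell^\infty(\ts,\mu)$ is part \eqref{lem:basic_isometry_3} of Lemma~\ref{lem:basic_isometry} (for the given $p$) for $\lh|_{\Ell^\infty(\ts,\mu)}$, hence part \eqref{lem:basic_isometry_1}, which is \eqref{lem:markov_typering_1}.

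There is no serious obstacle; the only point requiring care is that the reduction to $\Ell^\infty(\ts,\mu)$ be legitimate---this rests on the automatic $\Ell^p$-continuity of $\lh$ and on the density of $\Ell^\infty(\ts,\mu)$ in $\Ell^p(\ts,\mu)$---so that Lemma~\ref{lem:basic_isometry} can be invoked on $\Ell^\infty(\ts,\mu)$ and then undone on all of $\Ell^p(\ts,\mu)$.
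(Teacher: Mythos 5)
Your proof is correct and follows exactly the route the paper intends: the invariance and contractivity on $\Ell^\infty(\ts,\mu)$ come from the discussion preceding the lemma, and the equivalences are obtained from Lemma~\ref{lem:basic_isometry} via the density of $\Ell^\infty(\ts,\mu)$ in $\Ell^p(\ts,\mu)$ and the automatic $\Ell^p$-continuity of $\lh$. The paper leaves these "continuity and density arguments" to the reader, and your write-up supplies them accurately.
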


\begin{remark}\label{rem:markov_typering}
In the terminology of \cite[Section~13.1]{EiFaHaNa15}, the equivalence of \eqref{lem:markov_typering_2} and \eqref{lem:markov_typering_3} in Lemma~\ref{lem:markov_typering} implies that a lattice homomorphism $\lh\colon\Ell^p(\ts,\mu)\to\Ell^p(\ts,\mu)$ that leaves the constants fixed is a Markov operator on $\Ell^p(\ts,\mu)$ precisely when it is an isometry.
\end{remark}

Note that $p$ is absent from part~\ref{lem:markov_typering_1} of Lemma~\ref{lem:markov_typering}, but present in parts~\ref{lem:markov_typering_2} and~\ref{lem:markov_typering_3}. Lemma~\ref{lem:basic_isometry} has similar features. Using restriction to, and extension from, the common dense subspace $\Ell^\infty(\ts,\mu)$ of all spaces $\Ell^p(\ts,\mu)$ ($1\leq p<\infty$), one readily obtains the following result.

\begin{lemma}\label{lem:restrictie_isomorfisme_single_operator}
Let $(X,\mu)$ be a probability space, let  $1\leq p\leq q<\infty$, and let $\lh\colon\Ell^p(\ts,\mu)\to\Ell^p(\ts,\mu)$ be an isometric lattice homomorphism that leaves the constants fixed. Then $\lh$ leaves $\Ell^q(\ts,\mu)\subseteq\Ell^p(\ts,\mu)$ invariant, and the restriction $\lh\colon\Ell^q(\ts,\mu)\to \Ell^q(\ts,\mu)$ is an isometric lattice homomorphism that leaves the constants fixed. Moreover, every isometric lattice homomorphism of $\Ell^q(\ts,\mu)$ that leaves the constants fixed can thus be obtained from a unique $\lh$.
\end{lemma}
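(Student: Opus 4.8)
The plan is to exploit that $\Ell^\infty(\ts,\mu)$ sits densely inside every $\Ell^r(\ts,\mu)$ for $1\le r<\infty$ (since $\mu$ is a probability measure, simple functions are dense in $\Ell^r$ and lie in $\Ell^\infty$), and that, by Lemma~\ref{lem:markov_typering}, an isometric lattice homomorphism $\lh$ of $\Ell^p(\ts,\mu)$ fixing the constants restricts to a contractive lattice homomorphism of $\Ell^\infty(\ts,\mu)$ for the sup-norm that fixes the constants and is integral-preserving. So the first step is: given $\lh\colon\Ell^p(\ts,\mu)\to\Ell^p(\ts,\mu)$ as in the statement, restrict it to $\Ell^\infty(\ts,\mu)$, invoke Lemma~\ref{lem:markov_typering} to see that this restriction is a lattice homomorphism fixing the constants with $\int_\ts\lh(f)\,\ud\mu=\int_\ts f\,\ud\mu$ for all $f\in\Ell^\infty(\ts,\mu)$, and then apply Lemma~\ref{lem:basic_isometry} (the equivalence of \eqref{lem:basic_isometry_1} and \eqref{lem:basic_isometry_4}) to conclude $\norm{\lh(f)}_q=\norm{f}_q$ for every $f\in\Ell^\infty(\ts,\mu)$ and, in particular, for our fixed $q$.

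The second step is to upgrade this to the whole of $\Ell^q(\ts,\mu)$. Since $\Ell^q(\ts,\mu)\subseteq\Ell^p(\ts,\mu)$ (again because $\mu$ is finite), $\lh$ is already defined on $\Ell^q$; I must check it maps $\Ell^q$ into $\Ell^q$ isometrically. Take $f\in\Ell^q(\ts,\mu)$ and a sequence $(f_n)\subseteq\Ell^\infty(\ts,\mu)$ with $\norm{f-f_n}_q\to 0$; one may take $f_n$ to be truncations of $f$, so that $\abs{f_n}\le\abs{f}$ and $f_n\to f$ $\mu$-a.e. The sequence $(f_n)$ is $\norm{\cdot}_q$-Cauchy, hence by the first step $(\lh f_n)$ is $\norm{\cdot}_q$-Cauchy in $\Ell^\infty(\ts,\mu)\subseteq\Ell^q(\ts,\mu)$, and so converges in $\Ell^q(\ts,\mu)$ to some $h$ with $\norm{h}_q=\lim\norm{\lh f_n}_q=\lim\norm{f_n}_q=\norm{f}_q$. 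On the other hand $f_n\to f$ also in $\Ell^p(\ts,\mu)$ (dominated convergence, since $\mu$ is finite and $\abs{f_n}\le\abs f\in\Ell^q\subseteq\Ell^p$), so $\lh f_n\to \lh f$ in $\Ell^p(\ts,\mu)$ by continuity of $\lh$ in the $p$-norm; comparing $\Ell^p$-limits forces $h=\lh f$. Thus $\lh f\in\Ell^q(\ts,\mu)$ and $\norm{\lh f}_q=\norm{f}_q$. That the restriction $\lh\colon\Ell^q(\ts,\mu)\to\Ell^q(\ts,\mu)$ is still a lattice homomorphism fixing the constants is immediate, since the lattice operations and the constant $\ind_\ts$ are the same in $\Ell^q$ as in $\Ell^p$ and $\lh$ already respects them there.

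For the final ``conversely'' clause: given an isometric lattice homomorphism $\isomtwo\colon\Ell^q(\ts,\mu)\to\Ell^q(\ts,\mu)$ fixing the constants, restrict it to $\Ell^\infty(\ts,\mu)$, where — exactly as above, using Lemma~\ref{lem:markov_typering} and Lemma~\ref{lem:basic_isometry} — it becomes a lattice homomorphism fixing the constants that is $\norm{\cdot}_p$-isometric; then extend it from the dense subspace $\Ell^\infty(\ts,\mu)$ to a $\norm{\cdot}_p$-isometry $\lh$ of $\Ell^p(\ts,\mu)$ by uniform continuity and completeness. Continuity and density guarantee that $\lh$ is still a lattice homomorphism (the identities $\lh(f\vee g)=\lh f\vee\lh g$ etc.\ pass to the limit by continuity of the lattice operations) and fixes the constants, and by the first part its restriction back to $\Ell^q(\ts,\mu)$ recovers $\isomtwo$; uniqueness of $\lh$ follows because any two such extensions agree on the dense subspace $\Ell^\infty(\ts,\mu)$. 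The only mildly delicate point — and the closest thing to an obstacle — is keeping straight the two different topologies ($\norm{\cdot}_p$ and $\norm{\cdot}_q$) on the intertwined spaces and making sure the approximating sequences converge in \emph{both}, so that the $\Ell^p$- and $\Ell^q$-limits can be identified; the truncation sequences $f_n$ with $\abs{f_n}\le\abs f$ handle this cleanly via dominated convergence, so no real difficulty remains.
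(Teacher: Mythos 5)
Your proposal is correct and follows exactly the route the paper intends: the paper derives this lemma in one sentence by ``restriction to, and extension from, the common dense subspace $\Ell^\infty(\ts,\mu)$,'' using Lemma~\ref{lem:markov_typering} to pass to the integral-preservation condition (which is $p$-free) and Lemma~\ref{lem:basic_isometry} to recover the $q$-isometry, and your write-up fills in precisely these density and limit-comparison details. The truncation argument with $\abs{f_n}\le\abs{f}$ and the identification of the $\Ell^p$- and $\Ell^q$-limits are sound.
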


\begin{remark}\label{rem:lamperti}
There is an alternative way to understand why Lemma~\ref{lem:restrictie_isomorfisme_single_operator} holds. According to Lamperti's theorem \cite[Theorem~3.2.5]{Fleming-Jamison03}, the isometries of $\Ell^p(\ts,\mu)$ are, for $1\leq p\neq 2<\infty$ , the composition of a multiplication operator and an operator that is induced by a regular set isomorphism. An inspection of the proof shows that the theorem actually describes all disjointness preserving isometries; this disjointness preserving property being automatic if $p\neq 2$. Consequently, if $1\leq p<\infty$ is fixed, and $\lh\colon\Ell^p(\ts,\mu)\to \Ell^p(\ts,\mu)$ is an isometric lattice isomorphism that fixes the constants, then the description in Lamperti's theorem applies to the operator $T$. Since $T$ fixes the constants, the multiplication operator is the identity, so that $T$ is actually induced by a regular set isomorphism. Since $T$ is an isometry, this regular set isomorphism must be measure preserving. It is then clear why and how $T$ acts as isometric lattice automorphisms on all spaces $\Ell^p(\ts,\mu)$: all these actions arise from the same underlying measure preserving regular set isomorphism. At the cost of invoking Lamperti's result, and of some technical details of a different nature, a different proof of Lemma~\ref{lem:restrictie_isomorfisme_single_operator} can thus be given.
\end{remark}

If $(\ts,\mu)$ is a probability space, and if $1\leq p<\infty$, then, as is well known, the topology that is induced on $\{f\in\Ell^\infty(\ts,\mu):{\norm{f}}_\infty\leq 1\}$ by $\Ell^p(\ts,\mu)$ does not depend on $p$. As a first consequence, the spaces $\Ell^p(\ts,\mu)$ for $1\leq p<\infty$ are either all separable, or all non-separable; their separability is known to be equivalent to the separability of $\mu$, see \cite[Vol. I, Exercise~4.7.63]{Bogachev07}. As a second consequence, when combined with Lemma~\ref{lem:restrictie_isomorfisme_single_operator} and with the already observed fact that a lattice homomorphism of $\Ell^p(\ts,\mu)$, that leaves the constants fixed, automatically leaves $\{f\in\Ell^\infty(\ts,\mu):{\norm{f}}_\infty\leq 1\}$ invariant, this $p$-independence of the topology yields the statement on the strong operator topology of the following result, which is in the spirit of \cite[Proposition~13.6]{EiFaHaNa15}.

\begin{proposition}\label{prop:restrictie_isomorfisme_semigroup} Let $(\ts,\mu)$ be a probability space, and let $1\leq p\leq q<\infty$. Then the semigroup/group of isometric lattice homomorphisms/automorphisms of $\Ell^p(\ts,\mu)$ into/onto itself that leaves the constants fixed is, via the restriction map, isomorphic to the semigroup/group of isometric lattice homomorphisms/automorphisms of $\Ell^q(\ts,\mu)$ into/onto itself that leaves the constants fixed. This isomorphism is a homeomorphism for both the strong and the weak operator topologies as induced from the bounded operators on $\Ell^p(\ts,\mu)$ and $\Ell^q(\ts,\mu)$.
\end{proposition}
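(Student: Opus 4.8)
The plan is to handle the algebraic and the topological assertions separately, in both cases reducing everything to the common dense subspace $\Ell^\infty(\ts,\mu)$ of all the $\Ell^p(\ts,\mu)$. The algebraic isomorphism is essentially Lemma~\ref{lem:restrictie_isomorfisme_single_operator}: that result already shows that restriction to $\Ell^q(\ts,\mu)$ is a well-defined bijection from the isometric lattice homomorphisms of $\Ell^p(\ts,\mu)$ fixing the constants onto those of $\Ell^q(\ts,\mu)$. Since each such $T$ leaves $\Ell^q(\ts,\mu)$ invariant, restriction is multiplicative, $(ST)|_{\Ell^q(\ts,\mu)}=(S|_{\Ell^q(\ts,\mu)})(T|_{\Ell^q(\ts,\mu)})$, which settles the semigroup statement. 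For the group statement one checks that restriction maps automorphisms to automorphisms, with $(T|_{\Ell^q(\ts,\mu)})^{-1}=T^{-1}|_{\Ell^q(\ts,\mu)}$, and, conversely, using the uniqueness clause of Lemma~\ref{lem:restrictie_isomorfisme_single_operator} applied to the identity operator, that a $T$ inducing a bijection of $\Ell^q(\ts,\mu)$ must already be a bijection of $\Ell^p(\ts,\mu)$.

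The substance is the topological statement, and the idea is to produce, in each of the two cases, a \emph{$p$-independent} criterion for the convergence of a net $(T_\alpha)$ to $T$ in the relevant operator topology. For the strong operator topology I would show that $T_\alpha\to T$ strongly in $\La(\Ell^p(\ts,\mu))$ if and only if $\norm{T_\alpha h-Th}_{\Ell^1(\ts,\mu)}\to 0$ for every $h\in\Ell^\infty(\ts,\mu)$. Indeed, $\Ell^\infty(\ts,\mu)$ is dense in $\Ell^p(\ts,\mu)$ and all operators in sight are $\Ell^p$-isometries, hence uniformly bounded, so a $3\epsilon$-argument reduces strong convergence to its effect on vectors $h\in\Ell^\infty(\ts,\mu)$; moreover each $T_\alpha$ and $T$ is, by Lemma~\ref{lem:markov_typering}, a contraction on $\Ell^\infty(\ts,\mu)$ for the supremum norm, so $T_\alpha h-Th$ stays in a fixed ball of $\Ell^\infty(\ts,\mu)$, on which — as recalled in the text just before Proposition~\ref{prop:restrictie_isomorfisme_semigroup} — the topologies induced by the various $\Ell^r$-norms all coincide; thus $\norm{T_\alpha h-Th}_p\to 0$ is equivalent to $\norm{T_\alpha h-Th}_1\to 0$. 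Since exactly the same criterion describes strong convergence in $\La(\Ell^q(\ts,\mu))$, and restriction does not alter the action on $\Ell^\infty(\ts,\mu)$, the restriction map carries strongly convergent nets to strongly convergent nets in both directions, i.e.\ it is a homeomorphism for the strong operator topologies.

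For the weak operator topology one tests against the duals $(\Ell^p(\ts,\mu))^{*}\cong\Ell^{p'}(\ts,\mu)$, where $1/p+1/p'=1$ and $(\Ell^1(\ts,\mu))^{*}=\Ell^\infty(\ts,\mu)$ when $p=1$. Using the $\Ell^p$-isometry of the $T_\alpha$ together with H\"older's inequality, the function argument can be pushed into $\Ell^\infty(\ts,\mu)$ by density in $\Ell^p(\ts,\mu)$, and the functional argument can likewise be pushed into $\Ell^\infty(\ts,\mu)$: by density in $\Ell^{p'}(\ts,\mu)$ when $p>1$, and trivially when $p=1$ since then $\Ell^{p'}(\ts,\mu)=\Ell^\infty(\ts,\mu)$ already. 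A $3\epsilon$-argument then shows that $T_\alpha\to T$ weakly in $\La(\Ell^p(\ts,\mu))$ if and only if $\int_\ts(T_\alpha h)\,k\,\ud\mu\to\int_\ts(Th)\,k\,\ud\mu$ for all $h,k\in\Ell^\infty(\ts,\mu)$, again a $p$-independent condition that coincides with the corresponding criterion for $\La(\Ell^q(\ts,\mu))$; hence restriction is a homeomorphism for the weak operator topologies as well.

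The main obstacle is not a single hard estimate but the bookkeeping in these $3\epsilon$-reductions: one must make sure that the $p=1$ endpoint is handled correctly — on the functional side no approximation is available or needed because $(\Ell^1(\ts,\mu))^{*}$ is literally $\Ell^\infty(\ts,\mu)$ — and that the reductions for $p$ and for $q$ genuinely land on one and the same $p$-independent convergence criterion, which is precisely what forces the restriction bijection to be bicontinuous. The two nontrivial inputs, both already available, are the density of $\Ell^\infty(\ts,\mu)$ in each $\Ell^r(\ts,\mu)$ with $r<\infty$ and the $p$-independence of the topology induced on the unit ball of $\Ell^\infty(\ts,\mu)$ by the $\Ell^r$-norms.
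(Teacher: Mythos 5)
Your proof is correct and follows essentially the same route as the paper: the algebraic isomorphism is Lemma~\ref{lem:restrictie_isomorfisme_single_operator} together with the multiplicativity of the restriction map, and the topological statement is reduced, via density of $\Ell^\infty(\ts,\mu)$ and the uniform boundedness of the isometries, to the $p$-independence of the topology induced on bounded subsets of $\Ell^\infty(\ts,\mu)$ by the various $\Ell^r$-norms. The paper's paragraph preceding the proposition only explicitly derives the strong operator topology case, so your separate treatment of the weak operator topology (testing against $h,k\in\Ell^\infty(\ts,\mu)$ and noting that no approximation of the functional is needed at the $p=1$ endpoint) correctly supplies a detail the paper leaves to the reader.
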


We thus have the following result concerning our type of representations always occurring in families.

\begin{corollary}\label{cor:family}
Let $(\ts,\mu)$ be a probability space, let $G$ be a group, and let $1\leq p_0<\infty$. Suppose that $G$ acts on $\Ell^{p_0}(\ts,\mu)$ as isometric lattice automorphisms that leave the constants fixed. Then $G$ acts naturally on $\Ell^{p}(\ts,\mu)$ as isometric lattice automorphisms that leave the constants fixed for all $1\leq p<\infty$. These representation spaces are either all separable, or all non-separable. If $G$ is a topological group, then these representations are either all strongly/weakly continuous, or all strongly/weakly discontinuous.
\end{corollary}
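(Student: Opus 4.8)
The plan is to derive Corollary~\ref{cor:family} as a direct consequence of Proposition~\ref{prop:restrictie_isomorfisme_semigroup} together with the structural facts assembled in Lemmas~\ref{lem:markov_typering} and~\ref{lem:restrictie_isomorfisme_single_operator} and the remarks on separability. First I would fix the given $p_0$ and an arbitrary $1\leq p<\infty$, and use Proposition~\ref{prop:restrictie_isomorfisme_semigroup} twice: once to identify the group $\mathrm{Aut}_{p_0}$ of isometric lattice automorphisms of $\Ell^{p_0}(\ts,\mu)$ fixing the constants with the analogous group $\mathrm{Aut}_q$ for some auxiliary $q\geq\max(p_0,p)$, and once more to identify $\mathrm{Aut}_p$ with $\mathrm{Aut}_q$; composing these two group isomorphisms (both realized concretely by the restriction/extension maps of Lemma~\ref{lem:restrictie_isomorfisme_single_operator}, which all pass through the common dense subspace $\Ell^\infty(\ts,\mu)$) yields a group isomorphism $\Phi\colon\mathrm{Aut}_{p_0}\to\mathrm{Aut}_p$. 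The action of $G$ on $\Ell^p(\ts,\mu)$ is then simply $\Phi\circ\rep_{p_0}$, where $\rep_{p_0}\colon G\to\mathrm{Aut}_{p_0}$ is the given homomorphism; since $\Phi$ is a group homomorphism this is again a homomorphism, and its image consists of isometric lattice automorphisms fixing the constants by construction. I would emphasize that all these identifications are compatible in the sense that the operator on $\Ell^p(\ts,\mu)$ assigned to $g\in G$ is the unique isometric lattice automorphism whose restriction to $\Ell^\infty(\ts,\mu)$ agrees with the restriction of $\rep_{p_0}(g)$; this is what ``naturally'' means and also makes the construction manifestly independent of the auxiliary $q$.

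For the separability dichotomy, I would invoke the fact recorded just before Proposition~\ref{prop:restrictie_isomorfisme_semigroup} (and in the second of the two opening remarks of this section, citing \cite[Exercise~4.7.63]{Bogachev-I}) that the spaces $\Ell^p(\ts,\mu)$ for $1\leq p<\infty$ are either all separable or all non-separable, since their separability is equivalent to the separability of the measure $\mu$, a property intrinsic to $(\ts,\mu)$ and not involving $p$. Hence if one representation space is separable, they all are, and likewise for non-separability.

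For the continuity dichotomy, suppose $G$ is a topological group. The homomorphism $\rep_{p_0}\colon G\to\mathrm{Aut}_{p_0}$ is strongly (resp.\ weakly) continuous iff it is continuous into $\mathrm{Aut}_{p_0}$ equipped with the strong (resp.\ weak) operator topology inherited from $\La(\Ell^{p_0}(\ts,\mu))$. By the homeomorphism statement in Proposition~\ref{prop:restrictie_isomorfisme_semigroup}, the restriction isomorphism $\mathrm{Aut}_{p_0}\to\mathrm{Aut}_q$ is a homeomorphism for both the strong and weak operator topologies, and similarly $\mathrm{Aut}_p\to\mathrm{Aut}_q$; composing, $\Phi$ is a homeomorphism for both topologies. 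Therefore $\rep_p=\Phi\circ\rep_{p_0}$ is strongly (resp.\ weakly) continuous iff $\rep_{p_0}$ is. Since $p$ was arbitrary, either all the $\rep_p$ are strongly/weakly continuous or all are strongly/weakly discontinuous.

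The main obstacle here is essentially bookkeeping rather than a genuine difficulty: one must be careful that the two restriction isomorphisms supplied by Proposition~\ref{prop:restrictie_isomorfisme_semigroup} are set up so that they compose to give a well-defined map (which they do, because in each case the operator on the lower-$p$ space is \emph{uniquely} determined by its restriction to $\Ell^\infty(\ts,\mu)$, per Lemma~\ref{lem:restrictie_isomorfisme_single_operator}), and that the compatibility is routed through the single common subspace $\Ell^\infty(\ts,\mu)$ so that no choice of $q$ enters the final answer. Once that is phrased correctly, everything else is an immediate transport of structure along the homeomorphic group isomorphisms, and the proof is short.
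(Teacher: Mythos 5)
Your proposal is correct and matches the paper's (essentially implicit) argument: the corollary is stated as an immediate consequence of Proposition~\ref{prop:restrictie_isomorfisme_semigroup}, with the separability dichotomy coming from the $p$-independence of separability of $\Ell^p(\ts,\mu)$ noted just before that proposition, and the continuity dichotomy from the homeomorphism statement for the strong and weak operator topologies. Your extra care in routing the two restriction isomorphisms through a common $q\geq\max(p_0,p)$ and through the common dense subspace $\Ell^\infty(\ts,\mu)$ is exactly the right bookkeeping and introduces no gap.
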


We will now proceed to show that, if $G$ is a locally compact Hausdorff group, there is a model that gives an addional `explanation' of Corollary~\ref{cor:family}, in addition to the observation in Remark~\ref{rem:lamperti}. The main ideas leading to the pertinent Theorem~\ref{thm:transfer} are those employed in the proof of \cite[Theorem 15.27]{EiFaHaNa15}, where the group is compact and $p=1$, but with a few technical modifications, so that they lead to a stronger result that is valid for non-compact groups (a rather modest achievement) and for all $1\leq p<\infty$ simultaneously. The basic tool is an application of the commutative Gelfand-Naimark theorem, and for this we need some preparations.

The proof of the following result is a technically strengthened variation on part of the  proof of \cite[Theorem~15.27]{EiFaHaNa15}. For this, invariant integration over the group is needed, and this is the reason that the requirement that $G$ be a locally compact Hausdorff group becomes part of the hypotheses.

\begin{lemma}\label{lem:dense_subalgebra_one_space}
Let $(\ts,\mu)$ be a probability space, let $1\leq p<\infty$, and let $\rep$ be a strongly continuous representation of a locally compact Hausdorff group $G$ on $\Ell^p(\ts,\mu)$ as isometric lattice automorphisms that leave the constants fixed. Then there exists a $G$-invariant closed subalgebra $A_p$ of $(\Ell^\infty(\ts,\mu), {\norm{\,\cdot\,}}_\infty)$ that contains $\ind_\ts$, is dense in $\Ell^p(\ts,\mu)$, and is such that the restricted representation of $G$ on $(A_p,{\norm{\,\cdot\,}}_\infty)$ is strongly continuous. If $\mu$ is separable, and $G$ is $\sigma$-compact, then $A_p$ can be taken to be a separable subalgebra of $(\Ell^\infty(\ts,\mu), {\norm{\,\cdot\,}}_\infty)$.
\end{lemma}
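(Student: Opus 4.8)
The plan is as follows. First I would record two structural facts about the $G$-action. Since $\rep$ consists of lattice automorphisms of $\Ell^{p}(\ts,\mu)$ fixing $\ind_{\ts}$, the remark preceding Lemma~\ref{lem:markov_typering} shows that it restricts to an action of $G$ on $(\Ell^{\infty}(\ts,\mu),\norm{\,\cdot\,}_{\infty})$ as isometric lattice automorphisms fixing the constants. Moreover, Lemma~\ref{lem:powers_preserved} applied with exponent $2$ gives $\rep(g)(f^{2})=(\rep(g)f)^{2}$ for all $f\in\Ell^{\infty}(\ts,\mu)$, and polarization then shows that each $\rep(g)$ is an algebra automorphism of $\Ell^{\infty}(\ts,\mu)$. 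I would then introduce
\[
B:=\{\,u\in\Ell^{\infty}(\ts,\mu):g\mapsto\rep(g)u\text{ is }\norm{\,\cdot\,}_{\infty}\text{-continuous on }G\,\}
\]
and verify the routine fact that $B$ is a $\norm{\,\cdot\,}_{\infty}$-closed, $G$-invariant subalgebra of $\Ell^{\infty}(\ts,\mu)$ containing $\ind_{\ts}$: closedness by a $3\epsilon$-argument using that the $\rep(g)$ are $\norm{\,\cdot\,}_{\infty}$-isometries, the subalgebra property because multiplication on $\Ell^{\infty}(\ts,\mu)$ is jointly continuous and each $\rep(g)$ is multiplicative, and $G$-invariance since $g\mapsto\rep(gh)u$ is continuous whenever $g\mapsto\rep(g)u$ is.

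The heart of the argument is a smoothing construction. Fix a left Haar measure $\mu_{G}$ on $G$. For $\varphi\in\Cc(G)$ and $f\in\Ell^{\infty}(\ts,\mu)$ I would set $\varphi\star f:=\int_{G}\varphi(g)\,\rep(g)f\,\ud\mu_{G}(g)$, the Bochner integral in $\Ell^{p}(\ts,\mu)$, which exists because $g\mapsto\rep(g)f$ is continuous and $\varphi$ is compactly supported. The step I expect to be the main technical obstacle is that this a priori merely $\Ell^{p}$-valued integral in fact lies in $\Ell^{\infty}(\ts,\mu)$ with $\norm{\varphi\star f}_{\infty}\leq\norm{\varphi}_{1}\norm{f}_{\infty}$; I would obtain this by noting that for each $g$ one has $\rep(g)f$ in the norm-closed convex order interval $[-\norm{f}_{\infty}\ind_{\ts},\norm{f}_{\infty}\ind_{\ts}]$ of $\Ell^{p}(\ts,\mu)$, and that a Bochner integral with respect to a probability measure lies in the closed convex hull of the range of the integrand, which yields the bound first for $\varphi\geq 0$ with $\int_{G}\varphi\,\ud\mu_{G}=1$ and then in general by splitting $\varphi=\varphi^{+}-\varphi^{-}$ and rescaling. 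Granting this, a change of variables gives $\rep(h)(\varphi\star f)=(L_{h}\varphi)\star f$, where $L_{h}\varphi$ is the left translate, whence $\norm{\rep(h)(\varphi\star f)-\rep(h')(\varphi\star f)}_{\infty}\leq\norm{L_{h}\varphi-L_{h'}\varphi}_{1}\norm{f}_{\infty}$; since translation is $\norm{\,\cdot\,}_{1}$-continuous on $\Cc(G)$, this shows $\varphi\star f\in B$. Finally, an approximate-identity argument — choosing $\varphi\geq 0$ with $\int_{G}\varphi\,\ud\mu_{G}=1$ and support contained in a neighbourhood of $e$ on which $\norm{\rep(g)f-f}_{p}$ is small — gives $\norm{\varphi\star f-f}_{p}\leq\int_{G}\varphi(g)\norm{\rep(g)f-f}_{p}\,\ud\mu_{G}(g)$ arbitrarily small, so $B$ is $\norm{\,\cdot\,}_{p}$-dense in $\Ell^{\infty}(\ts,\mu)$ and hence in $\Ell^{p}(\ts,\mu)$. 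Taking $A_{p}:=B$ settles the first assertion.

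For the separability refinement, assume $\mu$ is separable and $G$ is $\sigma$-compact. Then $\Ell^{p}(\ts,\mu)$ is separable, so its subspace $\Ell^{\infty}(\ts,\mu)$ has a countable $\norm{\,\cdot\,}_{p}$-dense subset $\{f_{n}\}_{n\in\N}$. For each $n,k\in\N$ I would pick, as above, $\varphi_{n,k}\in\Cc(G)$ with $\varphi_{n,k}\geq 0$, $\int_{G}\varphi_{n,k}\,\ud\mu_{G}=1$ and $\norm{\varphi_{n,k}\star f_{n}-f_{n}}_{p}<1/k$, and put $u_{n,k}:=\varphi_{n,k}\star f_{n}\in B$. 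Let $A_{p}$ be the $\norm{\,\cdot\,}_{\infty}$-closure of the subalgebra of $\Ell^{\infty}(\ts,\mu)$ generated by $\{\ind_{\ts}\}\cup\bigcup_{n,k}\{\rep(g)u_{n,k}:g\in G\}$; this is a $\norm{\,\cdot\,}_{\infty}$-closed $G$-invariant subalgebra (its generating set is $G$-invariant and each $\rep(g)$ is a $\norm{\,\cdot\,}_{\infty}$-continuous algebra automorphism). Since $B$ is a closed $G$-invariant subalgebra containing $\ind_{\ts}$ and every $\varphi\star f$, we have $A_{p}\subseteq B$, so the restricted $G$-action on $A_{p}$ is $\norm{\,\cdot\,}_{\infty}$-strongly continuous; and since $u_{n,k}\in A_{p}$ with $u_{n,k}\to f_{n}$ in $\norm{\,\cdot\,}_{p}$, the $\norm{\,\cdot\,}_{p}$-closure of $A_{p}$ contains every $f_{n}$, hence all of $\Ell^{p}(\ts,\mu)$. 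For separability of $A_{p}$ I would use that each orbit map $g\mapsto\rep(g)u_{n,k}$ is $\norm{\,\cdot\,}_{\infty}$-continuous and $G=\bigcup_{m}K_{m}$ with $K_{m}$ compact, so each orbit $\{\rep(g)u_{n,k}:g\in G\}=\bigcup_{m}\rep(K_{m})u_{n,k}$ is a countable union of compact metric sets, hence separable; the generating set is therefore separable, so is the subalgebra it generates, and so is its closure $A_{p}$. Apart from the $\Ell^{\infty}$-bound on $\varphi\star f$ singled out above, the remaining steps are standard manipulations with Bochner integrals, translation on $\Cc(G)$, and approximate identities.
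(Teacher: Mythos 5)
Your proposal is correct and follows essentially the same route as the paper: smooth with $\rep(\phi)f=\int_G\phi(g)\rep(g)f\,\ud\mu_G(g)$, show this lands in $\Ell^\infty(\ts,\mu)$, deduce $\norm{\,\cdot\,}_\infty$-continuity of its orbit map from $\Ell^1(G)$-continuity of translation, take $A_p$ to be the subspace of $\Ell^\infty(\ts,\mu)$ with continuous orbit maps, and handle separability via $\sigma$-compactness of $G$ making each orbit separable. The only local difference is at the key step that $\rep(\phi)f\in\Ell^\infty(\ts,\mu)$: the paper approximates the Bochner integral by Riemann-type sums converging in $\Ell^p$ and passes to an a.e.-convergent subsequence, whereas you use that the integral lies in the closed convex hull of the range together with the $\norm{\,\cdot\,}_p$-closedness of the order interval $[-\norm{f}_\infty\ind_\ts,\norm{f}_\infty\ind_\ts]$ --- a clean equivalent (which, incidentally, also yields the sharper bound $\norm{\phi}_1\norm{f}_\infty$), and you also make explicit the multiplicativity of each $\rep(g)$ on $\Ell^\infty(\ts,\mu)$, which the paper leaves tacit.
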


\begin{proof}
If $f\in\Ell^p(\ts,\mu)$, and $\phi\in \Cc(G)$, then, since the integrand is continuous and compactly supported, the $\Ell^p(\ts,\mu)$-valued Bochner integral  $\rep(\phi)f=\int_\ts \phi(g)\rep(g)f\,\ud\mu_G(g)$ exists; here $\mu_G$ is a left-invariant Haar measure on $G$. If $f\in\Ell^\infty(\ts,\mu$), then $\rep(\phi)$ is, in fact, an element of $\Ell^\infty(\ts,\mu)$. To see this, choose, for $n=1,2,\ldots$, a disjoint partition $\supp\, \phi=\bigcup_{i=1}^{N_n} E_i$ of the compact set $\supp\, \phi$ into measurable subsets $E_i$, and $g_i\in E_i$, such that ${\norm{\phi(g)\rep(g)f-\phi(g_i)\rep(g_i)f}}_p\leq 1/n$ and $|\phi(g)-\phi(g_i)|\leq 1/n$ for all $g\in E_i$. It is easy to see that $\sum_{i=1}^{N_n}\mu(E_i)\phi(g_i)\rep(g_i)f\to\rep(\phi)f$ in $\Ell^p(\ts, \mu)$ as $n\to\infty$. Passing to a subsequence, we may assume that this convergence is pointwise almost everywhere. On the other hand, we know that $G$ acts as isometries on $(\Ell^\infty(\ts,\mu),{\norm{\,\cdot\,}}_\infty)$, so that ${\norm{\sum_{i=1}^{N_n}\mu(E_i)\phi(g_i)\rep(g_i)f}}_\infty\leq\mu(\supp\,\phi){\norm{\phi}_\infty\norm{f}}_\infty$ for all $n$. We conclude that  $\rep(\phi)f$ is an element of $\Ell^\infty(\ts,\mu)$, as claimed. Moreover, since ${\norm{\sum_{i=1}^{N_n}\mu(E_i)\phi(g_i)\rep(g_i)f}}_\infty\leq\sum_{i=1}^{N_n}\leq |\phi(g_i)|\mu(E_i){\norm{\phi}}_\infty$, we can let $n\to\infty$ and conclude that ${\norm{\rep(\phi)f}}_\infty \leq {\norm{\phi}}_1{\norm{\phi}}_\infty {\norm{f}}_\infty$.
It follows easily from the latter inequality and the strong continuity of the left regular representation of $G$ on $\Ell^1(G)$ that the map $g\mapsto\rep(g)\rep(\phi)f$ from $G$ into $(\Ell^\infty(\ts,\mu),{\norm{\,\cdot\,}}_\infty)$ is continuous.

After these preparations, we let
\[
A^\prime_p=\{f\in\Ell^\infty(\ts,\mu) : g\mapsto\rep(g)f\textup{ is continous from }G\textup{ into }(\Ell^\infty(\ts,\mu),{\norm{\,\cdot\,}}_\infty)\}.
\]
Using that $G$ acts as isometries on $(\Ell^\infty(\ts,\mu),{\norm{\,\cdot\,}}_\infty)$, one sees that $A^\prime_p$ is a closed $G$-invariant subalgebra of $(\Ell^\infty(\ts,\mu),{\norm{\,\cdot\,}}_\infty)$ that contains $\ind_\ts$. It follows from our preparations that $A^\prime_p$ is dense in $\Ell^p(\ts,\mu)$.

For the general case, one can take $A_p=A^\prime_p$. If $\mu$ is separable, and $G$ is $\sigma$-compact, we select a countable subset $S$ of $A^\prime_p$ containing $\ind_\ts$ that is dense in $\Ell^p(\ts,\mu)$. If $f\in S$, then $\rep(G)f$ is a $\sigma$-compact, and hence a separable, subset of $(A^\prime_p,{\norm{\,\cdot\,}}_\infty)$. Therefore there exists a countable subset $G_f$ of $G$, containing the identity element, such that  $\rep(G)f\subseteq{\overline{\{\rep(g)f : g\in G_f\}}}^{{\norm{\,\cdot\,}}_\infty}\subseteq A^\prime_p$. One can now take $A_p$ to be the closed subalgebra of  $(\Ell^\infty(\ts,\mu), {\norm{\,\cdot\,}}_\infty)$ that is generated by the $\rep(g)f$ for $f\in S$ and $g\in G_f$.
\end{proof}

\begin{remark} It is worth noting that every separable locally compact Hausdorff group $G$ is $\sigma$-compact. Indeed, there exists an open neighbourhood of $V$ of $e$ in $G$ that is $\sigma$-compact (according to \cite[Proposition~2.4]{Folland95}, $V$ can even be taken to be an open closed subgroup), and if $S\subset G$ is a countable dense subset, then $G=\bigcup_{s\in S} s(V\cap V^{-1})$ is $\sigma$-compact.
\end{remark}

The following result has no counterpart in \cite{EiFaHaNa15}. It is needed when one wants to transfer the `whole' picture in Theorem~\ref{thm:transfer}, i.e.\ for all $1\leq p<\infty$ simultaneously.

\begin{proposition}\label{prop:dense_subalgebra_all_spaces}
Let $(\ts,\mu)$ be a probability space, let $1\leq p_0<\infty$, and let $\rep$ be a strongly continuous representation of a locally compact Hausdorff group $G$ on $\Ell^{p_0}(\ts,\mu)$ as isometric lattice automorphisms that leave the constants fixed, so that $G$ acts naturally in a similar fashion on $\Ell^p(\ts,\mu)$ for all $1\leq p<\infty$. Then there exists a $G$-invariant closed subalgebra $A$ of $(\Ell^\infty(\ts,\mu), {\norm{\,\cdot\,}}_\infty)$ that contains $\ind_\ts$, is dense in $\Ell^p(\ts,\mu)$ for all $1\leq p<\infty$, and is such that the restricted representation of $G$ on $(A,{\norm{\,\cdot\,}}_\infty)$ is strongly continuous. If $\mu$ is separable, and $G$ is $\sigma$-compact, then $A$ can be taken to be a separable subalgebra of $(\Ell^\infty(\ts,\mu), {\norm{\,\cdot\,}}_\infty)$.
\end{proposition}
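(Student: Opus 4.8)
The plan is to reuse the construction from the proof of Lemma~\ref{lem:dense_subalgebra_one_space}, the point being that the subalgebra produced there is really independent of~$p$. By Lemma~\ref{lem:restrictie_isomorfisme_single_operator} and the discussion preceding it, the natural action of $G$ on each $\Ellp(\ts,\mu)$ ($1\leq p<\infty$) restricts to one and the same action of $G$ on $\Ell^\infty(\ts,\mu)$ by isometric lattice automorphisms fixing $\ind_\ts$. Hence the set
\[
A^\prime:=\{f\in\Ell^\infty(\ts,\mu) : g\mapsto\rep(g)f\textup{ is continuous from }G\textup{ into }(\Ell^\infty(\ts,\mu),{\norm{\,\cdot\,}}_\infty)\}
\]
is unambiguously defined, and, exactly as in the proof of Lemma~\ref{lem:dense_subalgebra_one_space} (using that $G$ acts as ${\norm{\,\cdot\,}}_\infty$-isometries on $\Ell^\infty(\ts,\mu)$), it is a $G$-invariant closed subalgebra of $(\Ell^\infty(\ts,\mu),{\norm{\,\cdot\,}}_\infty)$ containing $\ind_\ts$ on which the restricted representation of $G$ is strongly continuous for the supremum norm.

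First I would check that $A^\prime$ is dense in $\Ellp(\ts,\mu)$ for \emph{every} $1\leq p<\infty$. This repeats the first part of the proof of Lemma~\ref{lem:dense_subalgebra_one_space}: for $f\in\Ell^\infty(\ts,\mu)$ and $\phi\in\Cc(G)$ the Bochner integral $\rep(\phi)f:=\int_G\phi(g)\rep(g)f\,\ud\mu_G(g)$ lies in $\Ell^\infty(\ts,\mu)$ and $g\mapsto\rep(g)\rep(\phi)f$ is ${\norm{\,\cdot\,}}_\infty$-continuous, so $\rep(\phi)f\in A^\prime$; on the other hand $\rep$ acts strongly continuously on $\Ellp(\ts,\mu)$ for this $p$ by Corollary~\ref{cor:family}, so letting $\phi$ run over an approximate identity in $\Cc(G)$ gives $\rep(\phi)f\to f$ in $\Ellp(\ts,\mu)$. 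Thus $A^\prime$ contains an $\Ellp$-dense subset of $\Ell^\infty(\ts,\mu)$, which is itself $\Ellp$-dense in $\Ellp(\ts,\mu)$. Since this holds for every $p$, in the general case one may simply take $A:=A^\prime$.

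For the separable case I would prune $A^\prime$ to a countably generated, hence separable, subalgebra, as at the end of the proof of Lemma~\ref{lem:dense_subalgebra_one_space}, but taking care to preserve density in all $\Ellp$ simultaneously. The device is the well-known fact that on norm-bounded subsets of $\Ell^\infty(\ts,\mu)$ the topology induced from $\Ellp(\ts,\mu)$ does not depend on $p$. Assuming $\mu$ separable (so all $\Ellp(\ts,\mu)$ are separable) and $G$ $\sigma$-compact: (i) for each $n\in\N$ choose a countable subset $S_n$ of $A^\prime\cap\{f:{\norm f}_\infty\leq n\}$ that is dense there in the topology of $\Ell^1(\ts,\mu)$ (possible since $\Ell^1(\ts,\mu)$ is separable), hence, by the $p$-independence just mentioned, dense there in the topology of every $\Ellp(\ts,\mu)$; put $S:=\{\ind_\ts\}\cup\bigcup_n S_n$, a countable subset of $A^\prime$ with $\overline S^{\,\Ellp}\supseteq A^\prime$ and therefore $\Ellp$-dense in $\Ellp(\ts,\mu)$ for all $1\leq p<\infty$; (ii) for each $f\in S$, use $\sigma$-compactness of $G$ as in Lemma~\ref{lem:dense_subalgebra_one_space} to find a countable $G_f\subseteq G$ with $e\in G_f$ and $\rep(G)f\subseteq\overline{\{\rep(g)f:g\in G_f\}}^{\,{\norm{\,\cdot\,}}_\infty}\subseteq A^\prime$; (iii) let $A$ be the closed subalgebra of $(\Ell^\infty(\ts,\mu),{\norm{\,\cdot\,}}_\infty)$ generated by $\{\rep(g)f : f\in S,\,g\in G_f\}$. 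Then $A$ is separable, is contained in $A^\prime$ (so the restricted representation is strongly continuous for ${\norm{\,\cdot\,}}_\infty$), contains $\ind_\ts$, and contains $S$ (take $g=e$), hence is dense in each $\Ellp(\ts,\mu)$. Finally $A$ is $G$-invariant: each $\rep(g^\prime)$ is an isometric algebra automorphism of $(\Ell^\infty(\ts,\mu),{\norm{\,\cdot\,}}_\infty)$ — it is a lattice isomorphism fixing $\ind_\ts$, hence multiplicative, e.g.\ by Lemma~\ref{lem:powers_preserved} applied to squares together with polarization and the decomposition $f=f^+-f^-$ — and it maps the generating set of $A$ into $A$ by the defining property of the $G_f$, so $\rep(g^\prime)A\subseteq A$ and, applying this to $g^{\prime-1}$, $\rep(g^\prime)A=A$.

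The routine parts (that $A^\prime$ is a closed subalgebra, and the Bochner-integral estimates making $\rep(\phi)f\in\Ell^\infty(\ts,\mu)\cap A^\prime$) are taken verbatim from the proof of Lemma~\ref{lem:dense_subalgebra_one_space}. The only genuinely new point — and the step needing the extra care — is the simultaneous-density bookkeeping in the separable case, where the $p$-independence of the induced topology on bounded sets of $\Ell^\infty(\ts,\mu)$ is exactly what makes a single countable set $S$, and hence a single separable $A$, work for all $1\leq p<\infty$ at once.
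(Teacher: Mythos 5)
Your proof is correct, but it takes a different route to simultaneity in $p$ than the paper does. The paper's own proof of this proposition is a two-line deduction from Lemma~\ref{lem:dense_subalgebra_one_space}: for each integer $n$ choose an algebra $A_n$ as in that lemma, dense in $\Ell^n(\ts,\mu)$, and let $A$ be the closed subalgebra of $(\Ell^\infty(\ts,\mu),{\norm{\,\cdot\,}}_\infty)$ generated by all the $A_n$; density in every $\Ell^p$ then follows because $A\supseteq A_n$ for any $n\geq p$ and $\Ell^n(\ts,\mu)$ is dense in $\Ell^p(\ts,\mu)$, while separability, $G$-invariance, and strong continuity pass to the generated closed subalgebra. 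You instead reopen the proof of the lemma and observe that its maximal algebra $A^\prime_p$ is in fact independent of $p$, since it is defined purely via the common action on $(\Ell^\infty(\ts,\mu),{\norm{\,\cdot\,}}_\infty)$; its density in every $\Ell^p$ then follows from Corollary~\ref{cor:family} and the convolution argument, which settles the general case with the single canonical algebra $A^\prime$. For the separable case you then need the extra bookkeeping with the $p$-independence of the induced topology on ${\norm{\,\cdot\,}}_\infty$-bounded sets to extract one countable set dense in all $\Ell^p$ at once; this works (and your multiplicativity argument via Lemma~\ref{lem:powers_preserved} and polarization for the $G$-invariance of the generated algebra is sound), but it is precisely the step the paper's union-over-$n$ trick renders unnecessary. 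In short: your argument buys a more intrinsic, canonical choice of $A$ in the general case at the cost of a more delicate separability argument; the paper's buys a shorter proof by quoting the lemma countably many times.
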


\begin{proof}
For $n=1,2,\ldots$, choose an algebra $A_n$ as in Lemma~\ref{lem:dense_subalgebra_one_space} that is dense in $\Ell^n(\ts,\mu)$, and let $A$ be the closed subalgebra of $(\Ell^\infty(\ts,\mu), {\norm{\,\cdot\,}}_\infty)$  that is generated by the $A_n$.
\end{proof}

The following `transfer theorem' is a stronger version of \cite[Theorem~15.27]{EiFaHaNa15}. We include the short proof for the convenience of the reader, but hasten to add that it is a modest variation on that of \cite[Theorem~15.27]{EiFaHaNa15}, where only $p=1$ is considered and where the group is compact.

\begin{theorem}\label{thm:transfer}
Let $(\ts,\mu)$ be a probability space, let $1\leq p_0<\infty$, and let $\rep^{p_0}$ be a strongly continuous representation of a locally compact Hausdorff group $G$ on $\Ell^{p_0}(\ts,\mu)$ as isometric lattice automorphisms that leave the constants fixed, so that $G$ acts naturally in a similar fashion on $\Ell^p(\ts,\mu)$ for all $1\leq p<\infty$.

Then there exist
\begin{enumerate}
\item\label{thm:transfer_1}  a topological dynamical system $(G,K)$, where $K$ is a compact Hausdorff space;
\item\label{thm:transfer_2}  a $G$-invariant regular Borel probability measure $\widetilde\mu$ on $K$ with $\supp\, \widetilde\mu=K$;
\item\label{thm:transfer_3}  a family $\{\Phi_p\}_{1\leq p<\infty}$ of isometric lattice isomorphisms $\Phi_p:\Ell^p(\ts,\mu)\to\Ell^p(K,\widetilde\mu)$ that
\begin{enumerate}
\item\label{thm:transfer_3a} send $\ind_\ts$ to $\ind_K$;
\item\label{thm:transfer_3b} are compatible with the inclusions between $\Ell^p$-spaces;
\item\label{thm:transfer_3c} intertwine the strongly continuous representations of $G$ on the spaces $\Ell^p(\ts,\mu)$ with the canonical strongly continuous representations of $G$ on the spaces $\Ell^p(K,\widetilde\mu)$.
\end{enumerate}
\end{enumerate}
If $\mu$ is separable, and $G$ is $\sigma$-compact, then $K$ can be taken to be metrizable.
\end{theorem}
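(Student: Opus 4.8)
The plan is to realise the abstract data concretely through Gelfand theory applied to the invariant subalgebra produced in Proposition~\ref{prop:dense_subalgebra_all_spaces}. First I would invoke that proposition to obtain a $G$-invariant, uniformly closed subalgebra $A\subseteq(\Ell^\infty(\ts,\mu),{\norm{\,\cdot\,}}_\infty)$ that contains $\ind_\ts$, is dense in $\Ell^p(\ts,\mu)$ for every $1\leq p<\infty$, and on which $G$ acts strongly continuously by isometric lattice automorphisms; moreover $A$ is separable when $\mu$ is separable and $G$ is $\sigma$-compact. Since $A$ is a commutative unital (real) Banach algebra whose norm satisfies ${\norm{f^2}}_\infty={\norm{f}}_\infty^2$ and ${\norm{f^2}}_\infty\leq{\norm{f^2+g^2}}_\infty$, the commutative Gelfand--Naimark theorem supplies a compact Hausdorff space $K$, namely the Gelfand spectrum of $A$, together with an isometric algebra isomorphism $\Gamma\colon A\to C(K)$, the Gelfand transform. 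Because $A$ is uniformly closed it is automatically stable under $t\mapsto\abs{t}$ and under $t\mapsto\abs{t}^p$ (Weierstrass), and $\Gamma$ commutes with these, so $\Gamma$ is in fact an isometric lattice isomorphism sending $\ind_\ts$ to $\ind_K$; if $A$ is separable then $C(K)$ is, whence $K$ is metrizable, which will give the last clause.

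Next I would transport the action to $K$, settling \eqref{thm:transfer_1}. Each $\rep(g)$ is a lattice automorphism of $A$ fixing the constants, hence (Lemma~\ref{lem:powers_preserved} with $p=2$ and polarisation) it is multiplicative, so $\Gamma\rep(g)\Gamma^{-1}$ is a unital isometric algebra automorphism of $C(K)$, necessarily a composition operator $f\mapsto f\circ\tau_g^{-1}$ for a unique homeomorphism $\tau_g$ of $K$; comparing $\rep(g_1 g_2)=\rep(g_1)\rep(g_2)$ shows $\tau_{g_1 g_2}=\tau_{g_1}\circ\tau_{g_2}$, so $g\cdot x:=\tau_g x$ is a left action. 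The strong continuity of $\rep$ on $A$ translates into continuity of $g\mapsto f\circ\tau_g^{-1}$ from $G$ into $(C(K),{\norm{\,\cdot\,}}_\infty)$ for every $f\in C(K)$; given nets $g_i\to g$, $x_i\to x$, compactness of $K$ lets one extract a convergent subnet $\tau_{g_i}^{-1}x_i\to y$, and then $f(y)=\lim (f\circ\tau_{g_i}^{-1})(x_i)=(f\circ\tau_g^{-1})(x)$ for all $f\in C(K)$, so $y=\tau_g^{-1}x$ because $C(K)$ separates the points of $K$. Hence $(g,x)\mapsto g\cdot x$ is jointly continuous and $(G,K)$ is a topological dynamical system.

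For \eqref{thm:transfer_2}, let $\widetilde\mu$ be the regular Borel probability measure on $K$ furnished by the Riesz representation theorem applied to the positive unital functional $f\mapsto\int_\ts\Gamma^{-1}f\,\ud\mu$ on $C(K)$, so that $\int_K\Gamma(h)\,\ud\widetilde\mu=\int_\ts h\,\ud\mu$ for all $h\in A$. By Corollary~\ref{cor:family} and Lemma~\ref{lem:markov_typering}, every $\rep(g)$ preserves $\int_\ts\cdot\,\ud\mu$ on $\Ell^1(\ts,\mu)$, hence on $A$; transporting this through $\Gamma$ gives $\int_K f\circ\tau_g^{-1}\,\ud\widetilde\mu=\int_K f\,\ud\widetilde\mu$ for all $f\in C(K)$ and $g\in G$, i.e.\ $\widetilde\mu$ is $G$-invariant. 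Its support is all of $K$: for $h\in A\setminus\{0\}$ one has ${\norm{\Gamma h}}_{\Ell^p(K,\widetilde\mu)}={\norm{h}}_{\Ell^p(\ts,\mu)}>0$ (the inequality because $\Ell^\infty(\ts,\mu)\hookrightarrow\Ell^p(\ts,\mu)$ is injective), so $C(K)\hookrightarrow\Ell^p(K,\widetilde\mu)$ is injective and no nonempty open subset of $K$ is $\widetilde\mu$-null.

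Finally, for \eqref{thm:transfer_3}, I would define $\Phi_p$ on the dense subspace $A\subseteq\Ell^p(\ts,\mu)$ by $\Phi_p(h):=\Gamma(h)$. Stability of $A$ under $t\mapsto\abs{t}^p$ gives $\abs{h}^p\in A$ and $\Gamma(\abs{h}^p)=\abs{\Gamma h}^p$, hence $\int_\ts\abs{h}^p\,\ud\mu=\int_K\abs{\Gamma h}^p\,\ud\widetilde\mu$, so $\Phi_p$ is an isometry from $(A,{\norm{\,\cdot\,}}_p)$ into $\Ell^p(K,\widetilde\mu)$ and extends to an isometry $\Phi_p\colon\Ell^p(\ts,\mu)\to\Ell^p(K,\widetilde\mu)$; it is onto because its range is closed and contains the dense subspace $C(K)$ of $\Ell^p(K,\widetilde\mu)$ (a regular Borel measure on a compact Hausdorff space, as in the discussion preceding Corollary~\ref{cor:strong_continuity_Lp_regular_measures}), and it is a lattice isomorphism fixing $\ind_\ts\mapsto\ind_K$ since $\Gamma$ is. Part~\eqref{thm:transfer_3b} is immediate as all $\Phi_p$ restrict to the single map $\Gamma$ on the common dense subspace $A$, and for~\eqref{thm:transfer_3c} one checks on $A$ that $\Phi_p\rep(g)\Phi_p^{-1}=\Gamma\rep(g)\Gamma^{-1}=(f\mapsto f\circ\tau_g^{-1})$, which is precisely the canonical representation of $G$ on $\Ell^p(K,\widetilde\mu)$ restricted to $C(K)$ (strongly continuous by Corollary~\ref{cor:strong_continuity_Lp_regular_measures}); both sides are bounded and agree on a dense subspace, so they coincide. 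The metrizability of $K$ in the separable, $\sigma$-compact case was noted above. I expect the main obstacles to be the careful verification that $\Gamma$ is a lattice isomorphism and that $A$ is closed under $t\mapsto\abs{t}^p$, together with the joint-continuity argument for the action on $K$; a minor point is keeping everything in the real-scalar setting (or complexifying, applying the complex Gelfand--Naimark and Banach--Stone theorems, and descending to real parts).
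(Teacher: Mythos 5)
Your proposal is correct and follows essentially the same route as the paper's proof: obtain the invariant closed subalgebra $A$ from Proposition~\ref{prop:dense_subalgebra_all_spaces}, apply the commutative Gelfand--Naimark theorem to get $K$ and a lattice-and-algebra isomorphism onto $\mathrm{C}(K)$ satisfying $\Phi({|f|}^p)={|\Phi(f)|}^p$, transfer the action, produce $\widetilde\mu$ via the Riesz representation theorem, and extend by density to the isometries $\Phi_p$. The only difference is that you spell out several steps the paper leaves as ``well known'' or ``routinely verified'' (joint continuity of the induced action on $K$, full support of $\widetilde\mu$, surjectivity of $\Phi_p$), which is harmless.
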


\begin{proof}
Choose an algebra $A$ as in Proposition~\ref{prop:dense_subalgebra_all_spaces}. By the commutative Gelfand-Naimark theorem, there exist a compact Hausdorff space $K$ and a unital isometric algebra isomorphism $\Phi:(A,{\norm{\,\cdot\,}}_\infty)\to (\textup{C}(K), {\norm{\,\cdot\,}}_\infty)$. If $A$ is separable, then $K$ is metrizable. We know from \cite[Theorem~7.23.(iv)-(vi)]{EiFaHaNa15} that $\Phi$ is a lattice isomorphism, that $|f|^p\in A$ for all $f\in A$ and $1\leq p<\infty$, and that
\begin{equation}\label{eq:pth_power}
\Phi({|f|}^p)={|\Phi(f)|}^p\quad (f\in A, \,1\leq p<\infty).
\end{equation}
We transfer the strongly continuous action of $G$ on $(A,{\norm{\,\cdot\,}}_\infty)$ to $(\textup{C}(K), {\norm{\,\cdot\,}}_\infty)$ via $\Phi$. As is well known, this transferred action necessarily originates from a topological dynamical system $(G,K)$.

The Riesz representation theorem furnishes a regular Borel probability measure $\widetilde\mu$ on $K$, easily seen to be of full support, such that
\begin{equation}\label{eq:riesz}
\int_K \Phi(f) \,\ud\widetilde\mu = \int_\ts f \,\ud\mu\quad(f\in A).
\end{equation}

Since $\Phi$ intertwines the $G$-actions on $\textup{C}(K)$ and $A$ by construction, it is immediate from \eqref{eq:riesz} and part~\ref{lem:markov_typering_1} of Lemma~\ref{lem:markov_typering} that $\widetilde\mu$ is $G$-invariant. Furthermore, combination of \eqref{eq:pth_power} and \eqref{eq:riesz} shows that
\[
\int_K {|\Phi(f)|}^p\,\ud\widetilde\mu=\int_K \Phi({|f|}^p)\,\ud\widetilde\mu = \int_\ts {|f|}^p\,\ud\mu\quad (f\in A,\,1\leq p<\infty).
\]
Since, for all $1\leq p<\infty$, $A$ is dense in $\Ell^p(\ts,\mu)$, and $\textup{C}(K)$ is dense in $\Ell^p(K,\mu)$, by extension we obtain a family of isometries $\Phi_p:\Ell^p(\ts,\mu)\to\Ell^p(K, \widetilde\mu)$ ($1\leq p<\infty$). Since $\Phi$ is a lattice isomorphism, so are the $\Phi_p$.  The statements in parts \ref{thm:transfer_3b} and \ref{thm:transfer_3c} are routinely verified.
\end{proof}

It is now clear that Theorems~\ref{thm:transfer} can still be used to disintegrate representations even when there is no initial action on the underlying point set, since\textemdash under mild conditions\textemdash the latter is furnished by Theorem~\ref{thm:transfer}. The result is the following, which should be compared with the general unitary disintegration in \cite[Theorem~18.7.6]{Dixmier77}. Note the separability assumption on the probability space, needed to ensure that the compact Hausdorff space from Theorem~\ref{thm:transfer} is Polish.

\begin{theorem}\label{thm:disintegrating_Markov_actions}
Let $G$ be a locally compact Polish group, let $1\leq p_0<\infty$, and let $(X,\mu)$ be a separable probability space. Let $\rep^{p_0}:G\to\La(\Ell^{p_0}(\ts,\mu))$ be a strongly continuous representation of $G$ as isometric lattice automorphisms that leave the constants fixed. Then, for all $1\leq p<\infty$, there exists a representation $\rep^{p}$ of $G$ on $\Ell^p(\ts,\mu)$ with the same properties, that is obtained from $\rho^{p_0}$ via restriction to, and extension from, $\Ell^\infty(\ts,\mu)$. Furthermore, there exist a Borel probability space $ (\Omega,\nu)$ and a vector space $\vs$ such that, for all $1\leq p<\infty$, there exist
\begin{enumerate}
\item a measurable family $\{\bs_{\w}^p\}_{\w\in\Omega}$ of Banach lattices over $(\Omega,\nu,\vs)$;
\item a family of strongly continuous and order indecomposable representations $\rep_{\w}^p:G\to\La(\bs_{\w}^p)$ \textup{(}$\w\in\Omega$\textup{)} of $G$ as isometric lattice isomorphisms of $\bs_{\w}^p$;
\item an isometric lattice isomorphism $\isom^p:\Ell^{p}(\ts,\mu)\to \left(\int_{\Omega}^{\oplus}\bs_{\w}^p\,\ud\nu(\w)\right)_\ks$ such that the following diagram commutes for all $g\in G$:
\begin{align*}
  \xymatrixcolsep{3.5pc}\xymatrix{
 \Ell^{p}(\ts,\mu) \ar[r]^{\rep^p(g)} \ar[d]_{\isom^p} &   \Ell^{p}(\ts,\mu) \ar[d]_{\isom^p}\\ \left(\int_{\Omega}^{\oplus}\bs_{\w}^p\,\ud\nu(\w)\right)_\ks\quad \ar[r]^{\left(\int_{\Omega}^{\oplus}\!\rep_{\w}^p(g)\,\ud\nu(\w)\right)_\ks}& \quad \left(\int_{\Omega}^{\oplus}\bs_{\w}^p\,\ud\nu(\w)\right)_\ks
 }
\end{align*}
\end{enumerate}
\end{theorem}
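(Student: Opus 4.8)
The plan is to simply assemble the two main steps of the paper. First I would invoke Corollary~\ref{cor:family} to produce, for every $1\leq p<\infty$, a representation $\rep^{p}$ of $G$ on $\Ell^{p}(\ts,\mu)$ as isometric lattice automorphisms fixing the constants, obtained from $\rep^{p_0}$ by restriction to and extension from the common dense subspace $\Ell^\infty(\ts,\mu)$; strong continuity of each $\rep^p$ follows from the same corollary, since $\rep^{p_0}$ is strongly continuous. This disposes of the preliminary assertion of the theorem.

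Next I would apply the transfer Theorem~\ref{thm:transfer}. Since $G$ is locally compact Polish, it is $\sigma$-compact (by the remark following Lemma~\ref{lem:dense_subalgebra_one_space}), and $\mu$ is separable by hypothesis, so Theorem~\ref{thm:transfer} yields a topological dynamical system $(G,K)$ with $K$ a \emph{metrizable} compact Hausdorff space, a $G$-invariant regular Borel probability measure $\widetilde\mu$ on $K$, and a compatible family of isometric lattice isomorphisms $\Phi_p\colon\Ell^p(\ts,\mu)\to\Ell^p(K,\widetilde\mu)$ intertwining $\rep^p$ with the canonical representation of $G$ on $\Ell^p(K,\widetilde\mu)$. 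Being compact metrizable, $K$ is Polish, so $(G,K)$ is a Polish topological dynamical system with $G$ locally compact, and $\INV\neq\emptyset$ since $\widetilde\mu\in\INV$. Hence the hypotheses of the disintegration Theorem~\ref{thm:disintegrating_space_actions} are met for $(G,K,\widetilde\mu)$.

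Now I would fix once and for all the data furnished by Theorem~\ref{thm:disintegrating_space_actions}: set $\Omega:=\ERG$ (the ergodic Borel probability measures on $K$), $\nu$ the push-forward of $\widetilde\mu$ under a decomposition map $\beta\colon K\to\ERG$, and $\vs$ the vector lattice of simple functions on $K$. For each $p$, Theorem~\ref{thm:disintegrating_space_actions} supplies the measurable family of Banach lattices $\{\bs_\w^p\}_{\w\in\Omega}:=\{\Ell^p(K,\w)\}_{\w\in\ERG}$ over $(\Omega,\nu,\vs)$, the order indecomposable canonical representations $\rep_\w^p$ of $G$ on $\Ell^p(K,\w)$, which are strongly continuous by Remark~\ref{rem:strong_continuity_of_spaces} (equivalently Corollary~\ref{cor:automatic_strong_continuity_Polish_pair}), and an isometric lattice isomorphism $\isomtwo^p\colon\Ell^p(K,\widetilde\mu)\to\bigl(\int_\Omega^\oplus\bs_\w^p\,\ud\nu(\w)\bigr)_\ks$ intertwining the canonical representation on $\Ell^p(K,\widetilde\mu)$ with $\bigl(\int_\Omega^\oplus\rep_\w^p\,\ud\nu(\w)\bigr)_\ks$. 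Finally I would set $\isom^p:=\isomtwo^p\circ\Phi_p$; it is an isometric lattice isomorphism $\Ell^p(\ts,\mu)\to\bigl(\int_\Omega^\oplus\bs_\w^p\,\ud\nu(\w)\bigr)_\ks$, and composing the intertwining square of $\Phi_p$ (part~\ref{thm:transfer_3c} of Theorem~\ref{thm:transfer}) with that of $\isomtwo^p$ (part~(3) of Theorem~\ref{thm:disintegrating_space_actions}) gives the required commuting diagram for every $g\in G$.

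In this instance there is no real main obstacle: all the substantive work has been done in Corollary~\ref{cor:family}, Theorem~\ref{thm:transfer}, and Theorem~\ref{thm:disintegrating_space_actions}, and the proof is a matter of checking that the hypotheses chain together—in particular that $K$ comes out metrizable (hence Polish) precisely because $\mu$ is separable and $G$ is $\sigma$-compact, which is the only point where the separability assumption on $(\ts,\mu)$ is used—and that the common core vector space $\vs$ and the measure space $(\Omega,\nu)$ can be chosen independently of $p$, as they are built from $(G,K,\widetilde\mu)$ alone while only the fibers, representations, and isomorphisms depend on $p$. The one small item worth spelling out is that the diagram chase really is just a vertical composition of two commuting squares, so no further computation is needed.
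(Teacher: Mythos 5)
Your proposal is correct and follows exactly the route the paper intends: Corollary~\ref{cor:family} for the preliminary assertion, Theorem~\ref{thm:transfer} (with $K$ metrizable because $\mu$ is separable and $G$, being separable locally compact Hausdorff, is $\sigma$-compact) to obtain the Polish system $(G,K,\widetilde\mu)$, and Theorem~\ref{thm:disintegrating_space_actions} applied to that system, with $\isom^p=\isomtwo^p\circ\Phi_p$ composing the two intertwining squares. The identification of the $p$-independent data $(\Omega,\nu,\vs)$ as $(\ERG,\beta_*\widetilde\mu,\text{simple functions on }K)$ matches the paper's own closing description verbatim.
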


Inspection of the proofs shows that there is some more information available. $V$ can be taken to be the vector lattice of all simple functions on the compact metric space $K$ that is furnished by Theorem~\ref{thm:transfer}, $\Omega$ is then the set of all ergodic Borel probability measures on $K$, and $\nu$ is then the push-forward of the measure $\widetilde\mu$ on $K$ in Theorem~\ref{thm:transfer} to the set of ergodic Borel probability measures $\Omega$, using a decomposition map for $\widetilde\mu$ as in Section~\ref{sec:disintegrating_space_actions}. The family $\{\bs_{\w}^p\}_{\w\in\Omega}$ of Banach lattices is then the family $\{\Ell^{p}(K,\w)\}_{\w\in\Omega}$ of $\Ell^{p}$-spaces corresponding to the ergodic Borel probability measures on $K$, and the representations $\rep_{\w}^p$ are then the canonical representations of $G$ on these spaces.


\section{Perspective}\label{sec:perspective}

In Section~\ref{sec:introduction}, we put forward the task of disintegrating strongly continuous representations of a locally compact group as isometric lattice automorphisms of Banach lattices into similar representations that are order indecomposable. This would be the analogue of what is known to be possible for strongly continuous unitary representations of separable groups on separable Hilbert spaces. The $\Ell^p$-spaces for finite $p$ are arguably the prime examples of Banach lattices that can serve as representation spaces, and in that case our goal was achieved in Theorem~\ref{thm:disintegrating_Markov_actions} for a certain class of such representations. As explained in Section~\ref{sec:introduction}, this class already includes e.g.\ all natural representations on $\Ell^p$-spaces corresponding to topological actions of Lie groups on compact manifolds with an invariant Borel probability measure. Consequently, we now do not only know that the ensuing natural unitary representation of the group on the pertinent (complex) $\Ell^2$-space is a direct integral of irreducible (i.e.\ indecomposable) unitary representations, but also that the natural representations of the group as isometric lattice automorphisms of the pertinent real $\Ell^p$-spaces for finite $p$ are direct integrals of similar representations that are order indecomposable. 

Still, it is clear that Theorem~\ref{thm:disintegrating_Markov_actions} is only a first step in the study of the disintegration of general strongly continuous group representations as isometric lattice automorphisms of $\Ell^p$-spaces. At a conceptual level, the main insight seems to be that this is, in fact, possible for the representation in the present paper, and that (a modification of) the direct integral theory in \cite{HaLeRa91} provides the language to formalize such a disintegration. This is not so clear at the outset.

It is hoped that further steps can be taken. One possible development, still for a probability measure $\mu$ and a strongly continuous representation as isometric lattice automorphisms that leave the constants fixed, would be to attempt to relax the conditions in Theorem~\ref{thm:disintegrating_Markov_actions} that $G$ be Polish and/or that the probability space be separable. As is indicated in Remark~\ref{rem:lamperti}, if the constants are fixed, then one is `actually' looking at a measure preserving action of $G$ on the measure algebra $\mathcal A_\mu$. It is conceivable that Maharam's work in \cite{Maharam50} can then be used to improve on the technical hypotheses in Theorem~\ref{thm:disintegrating_Markov_actions}, since the main basic results (Theorems~1,~2a, and~2b) in \cite{Maharam50} do not involve any topology. They can be applied in the context of any measure preserving abstract group action on a measure algebra, and yield a decomposition of $\mathcal A_\mu$ with respect to the sub-algebra of the fixed points of $G$ in $\mathcal A_\mu$. It is shown (see \cite[Theorems~6 and~7]{Maharam50}) that this can be used to yield an ergodic decompositon of the group action at the level of measure spaces if $G$ equals the integers or the real numbers (which are, incidentally, both still Polish), and it is mentioned (but not proved) that a similar theorem holds in more general cases. It is open to investigation whether such a decomposition at the level of measure spaces\textemdash once actually established for more general $G$\textemdash can be pushed still further to the $G$-action on the $\Ell^p$-spaces themselves, while at the same time incorporating the (modified) direct integral formalism of \cite{HaLeRa91}. There are definitely some measurability issues to be taken care of, and perhaps the assumptions on $G$ and $\mu$ in Theorem~\ref{thm:disintegrating_Markov_actions} are not only not too restrictive from a practical point of view, but also not so easy to avoid when needing to ensure measurability in the proofs. After all, for the disintegration of a strongly continuous unitary group representation both the group and the Hilbert space are also required to be separable. On a positive note, since our main sources for the ergodic decomposition in the present paper, Farrell's 1962 paper~\cite{Farrell62}, Varadarajan's 1963 paper \cite{Varadarajan63}, and Zakrzewski's 2002 overview \cite{Zakrzewski02}, make no use Maharam's considerably earlier 1950 paper \cite{Maharam50} (in fact, they do not refer to her work at all), but concentrate on Borel spaces and group actions thereon, such a general approach based on \cite{Maharam50} would, to the knowledge of the authors, certainly provide a new angle on the matter.

Another possible development is the bold leap to consider the most general case of strongly continuous representations as isometric lattice automorphisms of $\Ell^p$-spaces\textemdash for possibly infinite measure $\mu$\textemdash that do not necessarily arise from an underlying measure preserving action. By Lamperti's theorem \cite[Theorem~3.2.5]{Fleming-Jamison03}, such operators are\textemdash this is true for $\sigma$-finite measures\textemdash always a composition of a multiplication operator and an operator that arises from an (not necessarily measure preserving) action on $\mathcal A_\mu$; see also Remark~\ref{rem:lamperti} for $p=2$. With this factorisation available, as a next step one could e.g.\ try to adapt the approach via Borel spaces as in \cite{Farrell62}, \cite{Varadarajan63}, or \cite{Zakrzewski02}, or else attempt a route via measure algebras by generalizing the material in \cite{Maharam50}. 

\subsubsection{Acknowledgements}
We thank Markus Haase for pointing out various results in~\cite{EiFaHaNa15}, specifically~\cite[Theorem 15.27]{EiFaHaNa15}, and for discussions on the proofs of the latter result and our Lemma~\ref{lem:dense_subalgebra_one_space}. The authors are indebted to the anonymous referee for pointing out the possible potential of Maharam's work in \cite{Maharam50} for the current line of research.

\bibliographystyle{plain}
\bibliography{Bibliografie}

\end{document}